\newtheorem{ley}{Ley}[section]
\newtheorem{thm}[ley]{Theorem}
\newtheorem{cor}[ley]{Corollary}
\newtheorem{lem}[ley]{Lemma}
\newtheorem{prop}[ley]{Proposition}
\newtheorem{rem}[ley]{Remark}
\newtheorem{ex}[ley]{Example}
\newcommand{\R}{\mathbb{R}}
\newcommand{\N}{\mathbb{N}}
\title{\bf Dines theorem for inhomogeneous quadratic functions and nonconvex
optimization}
\title{\bf Pair of inhomogeneous quadratic functions: joint-range convexity
properties and nonconvex programming}
\title{\bf Joint-range convexity for a pair of inhomogeneous quadratic functions
and applications to QP}
\author{
Fabi\'an Flores-Baz\'an \thanks{Departamento de Ingenier\'{i}a Matem\'atica,
CI$^2$MA, Universidad de Concepci\'on, Casilla 160-C,
Concepci\'on, Chile ({\tt fflores@ing-mat.udec.cl}). The research
for the first author was supported in part by CONICYT-Chile through
FONDECYT and BASAL Projects, CMM, Universidad de Chile.}
\and Felipe Opazo \thanks{Departamento de Ingenier\'{i}a Matem\'atica,
Universidad de Concepci\'on, Casilla 160-C,
Concepci\'on, Chile ({\tt felipeopazo@udec.cl})}
}
\date{}
\begin{document}

\maketitle
\begin{abstract}
We establish various extensions of the convexity Dines theorem for a
(joint-range) pair of inhomogeneous quadratic functions. If convexity fails
we describe those rays for which the sum of the joint-range and the ray is convex.
These results are suitable for dealing nonconvex
inhomogeneous quadratic optimization problems under one quadratic equality 
constraint. As applications of our main results, different sufficient conditions
for the validity of S-lemma (a nonstrict version of Finsler's theorem) for 
inhomogenoeus quadratic functions, is presented.
In addition, a new characterization of strong duality under Slater-type condition is 
established.
\end{abstract}

\begin{quote} {\small {\bf Key words.} Dines theorem, Nonconvex optimization, hidden
convexity, Quadratic programming, S-lemma, nonstrict version of Finsler's theorem,
Strong duality.}
\end{quote}
\begin{quote} {\small {\bf Mathematics subject classification 2000.}
Primary: 90C20, 90C46, 49N10, 49N15, 52A10.}
\end{quote}

\section{Introduction}\label{secc1}

Quadratic functions has proved to be very important in mathematics because of its
consequences in various subjects like calculus of variations,
mathematical programming, matrix theory (related to matrix pencil),
geometry and special relativity \cite{heste-mc1940, ham1999, toep1918, haus1919,
greub1967, hj1985, barvinok1995, elton2010}, among others, and applications in
Applied sciences: telecommunications, robust control \cite{mslt2008, sdl2006}, trust
region problems \cite{gay, sor}.

The lack of convexity always offers a nice challenge in mathematics,
but sometimes, as occurs in the quadratic world, hidden convexity is present,
It seems to be that one of the first results for quadratic forms is due to Finsler
\cite{fins1937}, known as (strict) Finsler's theorem, which refers to positive
definiteness of a matrix pencil. The same result was proved, independently, by
the Chicago's School under the guidance of Bliss. We quote Albert
\cite{albert1938}, Reid, \cite{reid1938}, Dines \cite{dines1941}, Calabi
\cite{calabi1964}, Hestenes \cite{heste1968}.

It perhaps the first beautiful results  for a pair of quadratic
forms is due to Dines \cite{dines1941} and Brickman \cite{brickman1961},
proving the convexity, respectively, of
\begin{equation}\label{din00}
\{(\langle Ax,x\rangle,\langle Bx,x\rangle)\in\R^2:~x\in\R^n\},
\end{equation}
\begin{equation}\label{bric00}
\{(\langle Ax,x\rangle,\langle Bx,x\rangle)\in\R^2:~\langle x,x\rangle=1,~
x\in\R^n\}~~(n\geq 3),
\end{equation}
provided $A$ and $B$ are real symmetric matrices. Actually Dines, motivated by the
above result due to Finsler, searched the convexity in \eqref{din00}. This
convexity property inspired to many researchers for searching hidden convexity
in the quadratic framework. Generalizations to more than two matrices were
developed in \cite{barvinok1995, pol1998, ham1999, hb-torki2002,
derin2006, polter2007}, and references therein, without being completed.
It is well known that, in general, $(f,g)(\R^n)$ is nonconvex if $f$ and $g$ are
inhomogeneous quadratic functions. \\
Precisely, our interest in the present paper is to consider a pair of
inhomogeneous quadratic functions $f$ and $g$, and to describe completely when the
convexity of $(f,g)(\R^n)$ occurs (the only result we aware is
Theorem 2.2 in \cite{pol1998}, it will be contained in our Theorem
\ref{lema:connd} below). In addition, we also answer
the question about which directions $d$ we
must add to the set $(f,g)(\R^n)$ in order to get convexity, in another words, for
which directions $d$, the set $(f,g)(\R^n)+\R_+d$ is convex.
As a consequence of our main result we
recover the Dines theorem. We exploit the hidden convexity to derive some sufficient
condition for the validity of an S-lemma with an equality constraint (a nonstrict 
version of Finsler's theorem for inhomogeneous quadratic functions), which are
expressed in a different way than that established in \cite{xws2015}, suitable for 
dealing with the problem
\begin{equation}\label{prob-min100}
\inf\{f(x):~g(x)= 0,~ x\in \R^n\}.
\end{equation}
The latter S-lemma is also useful for dealing with bounded generalized trust region 
subproblems,  that is, with constraints $l\leq g(x)\leq u$, as shown in 
\cite{xws2015}.
   
Moreover, a new strong duality result for this problem as well as
necessary and sufficient optimality conditions are established, covering situations
where no result in \cite{more, jleeli2009, xws2015} is applicable.
In \cite{xws2015}, by using a completely different approach, a characterization of
the convexity of $(f,g)(\R^n)$, when $g$ is affine, is given.

A complete description (besides the convexity) of the set
${\rm cone}((f,g)(\R^n)-\mu(1,0)+\R_+^2)$, where
\begin{equation}\label{prob-min200}
\mu\doteq \inf\{f(x):~g(x)\leq 0,~ x\in \R^n\},
\end{equation}
for any pair of inhomogeneous quadratic functions $f$ and $g$, is given in
\cite{ffb-carcamo2013} by assuming $\mu$ to be finite; and when $\mu=-\infty$ the
set ${\rm cone}((f,g)(\R^n)+\R_+^2)$ is considered. When $f$ and $g$ are any
real-valued functions, strong duality for \eqref{prob-min200} implies the
convexity of $\overline{\rm cone}((f,g)(\R^n)-\mu(1,0)+\R_+^2)$ as shown in
\cite{fb2v}.

It is worthwhile mentioning that the  existence of solution for
\eqref{prob-min200} was fully analyzed in \cite{bental2014} under simultaneous
diagonalizability (SD).

We point out that the convexity of $C\doteq (f,g)(\R^n)+\R_+^2$ (proved in Theorem
\ref{dines:00} below) was stated in Corollary 10 of  \cite{tt2013}, but its proof
is not correct since the set $C$ is not closed in general:
Examples \ref{ej:reff} and  \ref{ej:s:lema}  show this fact. On the other hand, we 
mention the recent paper \cite{jl2014} where it is proved, under suitable 
assumptions, the convexity of $(f,g_0,h_1,\ldots,h_m)(\R^n)+\R_+^{m+2}$ with $f$ 
being any quadratic function, $g_0$ (quadratic) strictly convex and all the other 
functions $h_i$ affine linear.
Another joint-range convexity result involving $Z$-matrices may be found in
\cite{jleeli2009}.

Apart from the characterizations of strong duality, several sufficient conditions
of the zero duality gap for convex programs have been established in the
literature, see \cite{fk, a2000, at2003, z, bw2006, bcm2008, bcw2008, tseng2009,
ozda2006}.

The paper is structured as follows. Section \ref{secc2}  provides the necessary
notations, definitions and some preliminaries to be used throughout
the paper: in particular, the Dines theorem is recalled. Some characterizations
of bi-dimensional Simultaneous Diagonalization (SD) and Non Degenerate (ND)
properties for a pair of matrices are established in Section \ref{secc3}.
Section \ref{secc4} contains our main results, all of them related to extensions of
Dines theorem. Applications of those extensions to nonconvex quadratic optimization
under a single equality constraint are presented in Section \ref{secc5}: they
include a new S-lemma (a nonstrict version of Finsler's theorem for inhomogeneous
quadratic functions), strong duality results, as well as necessary and sufficient
optimality conditions. Finally, Section \ref{secc6} presents, for reader's
convenience, a brief historical note about the appearance, in a chronological
order, of the several properties arising in the study of quadratic forms. Some
relationships between those properties are also outlined.

\section{Basic notations and some preliminaries}\label{secc2}

In this section we introduce the basic definitions, notations and some
preliminary results.

Given any nonempty set $K\subseteq\R^n$, its closure
is denoted by $\overline{K}$; its convex hull  by ${\rm co}(K)$ which is the
smallest convex set containing $K$; its topological interior by ${\rm int}~K$,
whereas its relative interior by ${\rm ri}~K$, it is the interior with respect to
its affine set; the (topological) boundary of $K$ is denoted by ${\rm bd}~K$. We
denote the complement of $K$ by ${\mathcal C}(K)$.
We set ${\rm cone}(K)\doteq\displaystyle{\bigcup_{t\geq 0}}tK$, being the smallest
cone containing $K$, and
$\overline{{\rm cone}}(K)\doteq\overline{\displaystyle{\bigcup_{t\geq 0}}tK}.$
In case $K=\{u\}$, we denote ${\rm cone}~K=\R_+u$ and 
$\R u\doteq\{tu:t\in\R\}$, where $\R_+\doteq[0,+\infty[$.
Furthermore, $K^*$ stands for the (non-negative) polar cone of $K$ which is
defined by
$$K^*\doteq\{\xi\in\R^n:~\langle \xi,a\rangle\geq 0~~\forall~a\in K\},$$
where $\langle\cdot,\cdot\rangle$ means the scalar or inner product in $\R^n$,
whose elements are considered column vectors. Thus, $\langle a,b\rangle=a^\top b$
for all $a, b\in\R^n$. By $K^\perp$ we mean the ortogonal subspace to $K$, given by
$K^\perp=\{u\in\R^n:~\langle u,v\rangle=0~~\forall~v\in K\}$; in case
$K=\{u\}$, we simply put $u^\perp$; $\R_+u$ stands for the ray starting at the
origin along the direction $u$. We say $P$ is a cone if $tP\subseteq P$ for all
$t\geq 0$, and it is pointed if $P\cap(-P)=\{0\}$.


Throughout this paper the matrices are always with real entries.
Given any matrix $A$ or order $m\times n$, $A^\top$ stands for the transpose of
$A$; whereas if $A$ is a symmetric square matrix of order $n$, we say it is 
positive semidefinite, denoted by $A\succcurlyeq 0$, if $\langle Ax,x\rangle\geq 0$ 
for all $x\in\R^n$; it is positive definite, denoted by $A\succ 0$ if
$\langle Ax, x\rangle>0$ for all $x\in\R^n$, $x\not=0$. The set of symmetric
square matrices of order $n$ is denoted by ${\mathcal S}^n$. \\
Given any quadratic function
$$f(x)= \langle Ax,x\rangle+\langle a,x\rangle+k_1,$$
for some $A\in{\mathcal S}^n$, $a\in\R^n$ and $k_1\in\R$, we set
$$f_H(x)\doteq\langle Ax,x\rangle,~~f_L(x)\doteq \langle a,x\rangle.$$
If we are given another quadratic function
$$g(x)=\langle Bx,x\rangle+\langle b,x\rangle+k_2,$$
for some $B\in{\mathcal S}^n$, $b\in\R^n$ and $k_2\in\R$. Set
\begin{equation}
z_{u,v}\doteq \begin{pmatrix}
\langle Au, v\rangle\cr
\langle Bu, v\rangle
\end{pmatrix},~~F_H(u)\doteq \begin{pmatrix}
f_H(u)\cr
g_H(u)
\end{pmatrix}.
\end{equation}

An important property in matrix analyis and in the study of nonconvex
quadratic programming, is that of Simultaneous Diagonalization property.
We say that any  two matrices $A, B$ in ${\mathcal S}^n$ has the Simultaneous
Diagonalization (SD) property, simple simultaneous diagonalizable, if there
exists a nonsingular matrix $C$  such that both $C^\top AC$ and $C^\top BC$
are diagonal
\cite[Section  7.6]{hj1985}, that is, if there are linearly independent (LI)
vector $u_i\in\R^n$, $i=1,\ldots,n$, such that
$z_{u_i,u_j}=0, ~~i\not=j.$ Such an
assumption, for instance, allowed the authors in \cite{bental1996} to
re-write the original problem in a more tractable one. The symbol LD stands for 
linear dependence.

It is said that $A$ and $B$ are Non Degenerate (ND) if
\begin{equation}\label{nd:00}
\langle Au,u\rangle=0=\langle Bu,u\rangle\Longrightarrow u=0.
\end{equation}

One of the most important results concerning quadratic functions refers to
Dine's theorem \cite{dines1941}, it perhaps motivated by Finsler's theorem
\cite{fins1937}.

\begin{thm}\label{din1941}
$\cite[Theorem~1, Theorem~2]{dines1941}\cite[Theorem~2]{heste1968}$
The set $F_H(\R^n)$ is a convex cone. In addition, if \eqref{nd:00} holds then
either $F_H(\R^n)=\R^2$ or $F_H(\R^n)$ is closed and pointed.
\end{thm}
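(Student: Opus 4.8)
The plan is to treat the three assertions in turn: that $F_H(\R^n)$ is a cone, that it is convex, and that under \eqref{nd:00} it is either all of $\R^2$ or closed and pointed. The cone property is immediate, since for $t\ge0$ and $x\in\R^n$ one has $F_H(\sqrt t\,x)=t\,F_H(x)$ and $F_H(0)=0$, so $t\,F_H(\R^n)\subseteq F_H(\R^n)$ for every $t\ge 0$.

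For convexity I would reduce to the two–dimensional case. Given $u,v\in\R^n$, restrict $A$ and $B$ to $W\doteq{\rm span}\{u,v\}$ (of dimension at most two); since $F_H(W)\subseteq F_H(\R^n)$, it suffices to prove that the image of a plane is convex, i.e.\ to take $n=2$. Writing $x=r\omega$ with $\omega=(\cos\phi,\sin\phi)$ and using the double–angle identities, $F_H(\omega)$ becomes $c+\cos(2\phi)\,v_1+\sin(2\phi)\,v_2$ for fixed vectors $c,v_1,v_2\in\R^2$; hence, as $\phi$ runs over $[0,2\pi)$, the point $F_H(\omega)$ traces an ellipse $E$ (possibly degenerate to a segment or a point), and $F_H(\R^2)={\rm cone}(E)$. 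I would then invoke the elementary fact that the cone generated by any convex set is convex: for $t_1,t_2\ge0$ and $e_1,e_2$ in a convex set, $t_1e_1+t_2e_2=(t_1+t_2)\big(\tfrac{t_1}{t_1+t_2}e_1+\tfrac{t_2}{t_1+t_2}e_2\big)$ again lies in the cone. The point that needs care — and which I regard as the \emph{main obstacle} in this step — is that $E$ is the ellipse curve rather than the solid ellipse, so one must check that ${\rm cone}(E)$ coincides with the cone over $\co(E)$. This follows by distinguishing whether the origin lies outside, on, or inside $\co(E)$: in each case every ray from the origin meeting $\co(E)$ already meets the curve $E$, so the two cones agree and convexity follows.

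For the additional statement, assume \eqref{nd:00}. Then $F_H(\omega)\ne0$ for every $\omega\in S^{n-1}$, so $K\doteq F_H(S^{n-1})$ is compact, avoids the origin, and (for $n\ge2$) is connected. I would first record that $F_H(\R^n)={\rm cone}(K)$ is closed: if $t_kp_k\to y$ with $t_k\ge0$ and $p_k\in K$, then (passing to a subsequence) compactness gives $p_k\to p\in K$ with $|p|\ge\delta>0$, whence $t_k\to|y|/|p|$ and $y\in{\rm cone}(K)$. Since convexity is already established, $F_H(\R^n)$ is a closed convex cone in $\R^2$; according to the dimension of its lineality space, such a cone is either pointed ($\dim 0$), contained in a line / equal to a closed half–plane ($\dim 1$), or all of $\R^2$ ($\dim 2$). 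It remains to exclude the one–dimensional lineality case.

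Suppose $F_H(\R^n)$ had lineality line $\R w$, so that $\langle\xi,F_H(x)\rangle\ge0$ for all $x$ with $\xi\perp w$; then $M\doteq\xi_1A+\xi_2B\succcurlyeq0$, and $F_H(x)$ lies on the boundary line exactly when $\langle Mx,x\rangle=0$, i.e.\ when $x\in\ker M$. On $N\doteq\ker M$ the map $F_H$ takes values in $\R w$, say $F_H(x)=\psi(x)\,w$ with $\psi$ a quadratic form on $N$. For the image to contain both boundary rays $\pm\R_+w$ (which it must, being a line or half–plane bounded by $\R w$), $\psi$ has to take both signs at non–parallel vectors, forcing $\dim N\ge2$; then, $N\setminus\{0\}$ being connected, $\psi$ vanishes at some $x_0\ne0$, giving $F_H(x_0)=0$ with $x_0\ne0$ and contradicting \eqref{nd:00}. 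Therefore, under \eqref{nd:00}, the only possibilities left are a pointed closed cone or $\R^2$, which is the claim.
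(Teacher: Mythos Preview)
The paper does not give its own proof of this statement; it is cited from Dines and Hestenes as a classical result. Your argument is correct and self-contained.

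Two small comparisons with material that \emph{does} appear in the paper. Your closedness step under ND (compactness of $K=F_H(S^{n-1})$ with $0\notin K$) is precisely the argument the paper records later in the proof of Proposition~\ref{sev:impl}, $(b)\Rightarrow(c)$. For the pointedness dichotomy, the paper's Section~\ref{secc3} would give a different route in dimension two: by Theorem~\ref{thm:0}, ND together with $F_H(\R^2)\neq\R^2$ forces SD, so $F_H(\R^2)=\R_+F_H(u)+\R_+F_H(v)$ for LI $u,v$ with $z_{u,v}=0$; ND then rules out $F_H(u)\in-\R_{++}F_H(v)$, yielding a pointed cone. Your approach through the lineality space and the semidefinite pencil $M=\xi_1A+\xi_2B$ is more direct and works uniformly in all dimensions.

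One wording to sharpen in the convexity step. The assertion ``every ray from the origin meeting $\co(E)$ already meets the curve $E$'' should be read as ``every ray meeting $\co(E)\setminus\{0\}$ already meets $E\setminus\{0\}$''. When $0$ lies on a nondegenerate ellipse $E$, the tangential ray meets both $E$ and $\co(E)$ only at the origin, so the implication is vacuous there and the equality ${\rm cone}(E)={\rm cone}(\co(E))$ still holds; both equal the open half-plane together with $\{0\}$, which one checks directly is convex. The remaining rays are handled as you indicate.
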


The convexity may fail for $F(\R^n)$ if $F(x)=(f(x),g(x))$ with $f, g$ being
not necessarily homogeneous quadratic functions, as the next example shows.
\begin{ex} \label{ex0}
Consider $f(x_1,x_2)=x_1+x_2-x_1^2-x_2^2-2x_1x_2$,
$g(x_1,x_2)=x_1^2+x_2^2+2x_1x_2-1$, and define
the set $M\doteq\{(f(x_1,x_2),g(x_1,x_2))\in\R^2:~(x_1,x_2)\in\R^2\}$.
Clearly $(0,0)=(f(0,1),g(0,1))\in M$ and
$(-2,0)=(f(-1,0),g(-1,0))\in M$, but
$(-1,0)=\frac{1}{2}(0,0)+\frac{1}{2}(-2,0)~\notin~F(\R^2)$.
One can actually see that
$$F_H(\R^2)=\R_+(-1,1),~{\rm and}~F(\R^2)=\{(t-t^2,t^2-1):~t\in\R\}.$$
\end{ex}
Another instance is Example \ref{ej:op0}, where
$$F(\R^2)=\{(0,0\}\cup [\R^2\setminus(\R\times\{0\})],~~F_H(\R^2)=
\R\times\{0\}.$$


We now state a simple result which will be used in the next
sections. For any $u=(u_1,u_2)\in\R^2$, set $u_\perp\doteq (-u_2,u_1)$, so that
$\|u\|=\|u_\perp\|$ and $\langle u_{\bot}, u\rangle =0$.

\begin{prop} \label{felo0} Let $u, v\in\R^{2}$. The following hold
\begin{itemize}
\item[$(a)$]  $\langle v_{\bot}, u\rangle =-\langle u_{\bot}, v\rangle;$
\item[$(b)$]  $\langle v_{\bot}, u\rangle \neq0\Longleftrightarrow\{u, v\} $
is LI.
\item[$(c)$] Assume that $\{ u, v\} $ is LI. Then
\begin{enumerate}
\item[$(c1)$] $h=t_{1}u+t_{2}v,~t_{2}\geq 0~({\rm resp.}~t_{2}>0)~
\Longleftrightarrow~\langle u_{\bot}, v\rangle \langle u_{\bot}, h\rangle \geq0~
({\rm resp.}~>0);$
\item[$(c2)$] $h=t_{1}u+t_{2}v, t_{1}\geq 0, t_2\geq 0\Longleftrightarrow
\langle u_{\bot}, v\rangle \langle u_{\bot}, h\rangle \geq0~{\rm and}~
\langle v_{\bot}, u\rangle \langle v_{\bot}, h\rangle \geq0.$
\end{enumerate}
\end{itemize}
\end{prop}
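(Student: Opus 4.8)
The plan is to reduce all four parts to a single algebraic identity, namely that
$\langle u_\perp, v\rangle = u_1 v_2 - u_2 v_1$ is exactly the $2\times 2$ determinant of the matrix whose columns are $u$ and $v$, together with the two orthogonality relations $\langle u_\perp, u\rangle = 0$ and $\langle v_\perp, v\rangle = 0$ that are built into the definition of $(\cdot)_\perp$. Everything else is bookkeeping around these facts.

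For $(a)$ I would simply expand both sides: $\langle v_\perp, u\rangle = -v_2 u_1 + v_1 u_2$ and $\langle u_\perp, v\rangle = -u_2 v_1 + u_1 v_2$, which are visibly negatives of one another (equivalently, swapping the two columns of the determinant changes its sign). For $(b)$ I would combine $(a)$ with the expansion just made: $\langle v_\perp, u\rangle \neq 0 \iff \langle u_\perp, v\rangle \neq 0 \iff u_1 v_2 - u_2 v_1 \neq 0$, and the vanishing of this determinant is precisely the standard criterion characterizing linear dependence of two vectors in $\R^2$. So $\langle v_\perp, u\rangle \neq 0$ is equivalent to $\{u,v\}$ being LI.

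For $(c)$ I would first record that, since $\{u,v\}$ is LI, it is a basis of $\R^2$, so every $h$ admits a \emph{unique} representation $h = t_1 u + t_2 v$; the left-hand conditions in $(c1)$ and $(c2)$ are statements about these unique coordinates. Applying $\langle u_\perp, \cdot\rangle$ and using $\langle u_\perp, u\rangle = 0$ gives $\langle u_\perp, h\rangle = t_2\langle u_\perp, v\rangle$, whence $\langle u_\perp, v\rangle\langle u_\perp, h\rangle = t_2\langle u_\perp, v\rangle^2$. By $(b)$ the factor $\langle u_\perp, v\rangle^2$ is strictly positive, so this product has the same sign as $t_2$, which yields $(c1)$ in both the ``$\geq 0$'' and the ``$>0$'' versions simultaneously. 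Symmetrically, pairing with $v_\perp$ and using $\langle v_\perp, v\rangle = 0$ gives $\langle v_\perp, u\rangle\langle v_\perp, h\rangle = t_1\langle v_\perp, u\rangle^2$, so this product carries the sign of $t_1$; conjoining this with $(c1)$ proves $(c2)$.

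There is essentially no serious obstacle here; the result is a routine planar computation. The only point deserving care is the logical form of $(c1)$ and $(c2)$: because they are biconditionals about the coordinates of $h$, one must invoke linear independence at the outset to guarantee both existence and uniqueness of the pair $(t_1,t_2)$ before reading off its signs from the two inner products.
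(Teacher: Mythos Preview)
Your proof is correct. The paper itself does not supply a proof of this proposition; it merely introduces it as ``a simple result which will be used in the next sections'' and moves on, so there is nothing to compare against. Your approach --- reducing everything to the determinant identity $\langle u_\perp,v\rangle=u_1v_2-u_2v_1$ together with $\langle u_\perp,u\rangle=\langle v_\perp,v\rangle=0$, and then reading off the signs of the unique coordinates $t_1,t_2$ from the products $\langle u_\perp,v\rangle\langle u_\perp,h\rangle=t_2\langle u_\perp,v\rangle^2$ and $\langle v_\perp,u\rangle\langle v_\perp,h\rangle=t_1\langle v_\perp,u\rangle^2$ --- is exactly the natural elementary verification the authors presumably had in mind.
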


Finally, the next lemma which is important by itself  will play an important role
in the subsequent sections.
\begin{lem}\label{denso} Let $X\subseteq\R^n$ be a nonempty subset of $\R^n$ 
and $h\not=0$, $h_1$, be any elements in $\R^2$ such that
\begin{equation}\label{ine:denso}
F(X)+\R h+\R_+ h_1\subseteq F(\R^n).
\end{equation}
Then $F(\R^n)$ is convex under any of the following circumstances:
\begin{itemize}
\item[$(a)$] $\{ h_{1}, h\}$ is LI and $\overline{X}=\R^n;$
\item[$(b)$] $\{ h_{1}, h\}$ is LD and $X=\R^n$.
\end{itemize}
\end{lem}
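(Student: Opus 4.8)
The plan is to linearize the geometry by projecting $\R^2$ along the direction $h$. Since $h\neq0$, set $\pi\doteq\langle h_{\bot},\cdot\rangle$, so that $\ker\pi=\R h$, and let $\phi\doteq\pi\circ F$, which is again a real-valued quadratic function on $\R^n$ (a linear combination of $f$ and $g$). Write $C\doteq F(\R^n)$ and $\alpha\doteq\inf_{\R^n}\phi\in[-\infty,+\infty[$. The whole point of \eqref{ine:denso} is to force $C$ to be, up to its trace on one line, a half-plane determined by $\phi$, which is automatically convex; so both cases will be read off from $\phi$ and $\alpha$.

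In case (b), $\{h_1,h\}$ LD together with $h\neq0$ gives $h_1\in\R h$, hence $\R h+\R_+h_1=\R h$, and \eqref{ine:denso} with $X=\R^n$ reads $C+\R h\subseteq C$. Thus $C$ is invariant under the line $\R h=\ker\pi$, so $C=\pi^{-1}(\pi(C))$; since $\pi(C)=\phi(\R^n)$ is the continuous image of the connected set $\R^n$, it is an interval, and therefore $C$ is the preimage of an interval under a linear form, i.e. a slab, hence convex.

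In case (a), $\{h_1,h\}$ is LI, so $\langle h_{\bot},h_1\rangle\neq0$ by Proposition \ref{felo0}(b); after fixing the sign of $\pi$ we may assume $\pi(h_1)>0$, whence $H\doteq\R h+\R_+h_1$ is the closed half-plane $\{\,y:\pi(y)\ge0\,\}$ and \eqref{ine:denso} becomes $F(X)+H\subseteq C$. First I would show the open half-plane $\{\pi>\alpha\}$ lies in $C$: density of $X$ and continuity of $\phi$ give $\inf_X\phi=\alpha$, so for $p$ with $\pi(p)>\alpha$ there is $x_0\in X$ with $\phi(x_0)<\pi(p)$; then $w\doteq p-F(x_0)$ satisfies $\pi(w)>0$, so $w\in H$, and $p=F(x_0)+w\in F(X)+H\subseteq C$. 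Together with the trivial inclusion $C\subseteq\{\pi\ge\alpha\}$, this pins $C$ down except for its trace on the boundary line $\ell\doteq\{\pi=\alpha\}$.

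The only remaining, and main, difficulty is to control $C\cap\ell$, where convexity of $C$ could a priori fail. If $\alpha=-\infty$ the previous step already gives $C=\R^2$, and if $\alpha$ is finite but unattained then $C\cap\ell=\varnothing$ and $C$ is an open half-plane; in both cases $C$ is convex. When $\alpha$ is finite and attained, this is where the quadratic nature of $F$ is essential: $C\cap\ell=F(M)$ with $M\doteq\{x:\phi(x)=\alpha\}=\arg\min\phi$, and since $\phi$ is a quadratic that is bounded below and attains its minimum, $M$ is an affine subspace (the solution set of $\nabla\phi=0$, the Hessian being positive semidefinite), hence connected; as $\pi\circ F\equiv\alpha$ on $M$, the image $F(M)$ is a connected subset of the line $\ell$, i.e. an interval, hence convex. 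Finally I would glue the pieces: $C=\{\pi>\alpha\}\cup F(M)$ is the union of an open half-plane with a convex subset of its bounding line, and such a union is readily checked to be convex (a segment with an endpoint in the open part, or with both endpoints on $\ell$, stays in $C$). The crux throughout is that \eqref{ine:denso} upgrades mere density information into the exact inclusion $\{\pi>\alpha\}\subseteq C$, reducing the problem to the lower-dimensional set $C\cap\ell$, where the affineness of $\arg\min\phi$ finishes the argument.
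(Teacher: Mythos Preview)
Your proof is correct and takes a genuinely different route from the paper's. The paper argues pointwise: given $x,y\in\R^n$ and $t\in(0,1)$ it shows $f_t\doteq tF(x)+(1-t)F(y)\in F(\R^n)$ directly, by locating an $x_0\in X$ so that $f_t$ lies in the open half-plane $F(x_0)+\R h+\R_{++}h_1$ (case (a)), or in the line $F(x_0)+\R h$ (case (b)). In case (a) this requires a sub-case split on whether $\langle h_\bot,F(y)-F(x)\rangle$ vanishes; when it does, the paper parametrizes by $q_1(\lambda)=F(\lambda x+(1-\lambda)y)$ and exploits that the scalar quadratic $\lambda\mapsto\langle h_\bot,q_1(\lambda)\rangle$ either is constant (then $q_1([0,1])$ is a connected arc in a line, hence contains $f_t$) or takes a value strictly below that of $F(x)$, producing the required $x_0$ by density.

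Your approach is global and structural: you project along $h$ via $\phi=\langle h_\bot,F(\cdot)\rangle$, observe that \eqref{ine:denso} forces $\{\pi>\alpha\}\subseteq F(\R^n)\subseteq\{\pi\ge\alpha\}$ with $\alpha=\inf\phi$, and then show the trace on the boundary line is $F(\arg\min\phi)$, which is a connected subset of a line because $\arg\min\phi$ is an affine subspace for a quadratic $\phi$ bounded below. This yields more than convexity: an explicit description of $F(\R^n)$ as $\R^2$, an open half-plane, or an open half-plane together with an interval on its bounding line. The paper's argument is slightly more self-contained (no appeal to the structure of minimizers of convex quadratics) and its segment-parametrization device $\lambda\mapsto F(\lambda x+(1-\lambda)y)$ recurs elsewhere in the paper; your projection-and-level-set argument is cleaner and more informative here. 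One minor remark: for a quadratic $\phi$, boundedness below already forces the infimum to be attained, so your ``$\alpha$ finite but unattained'' sub-case is in fact vacuous, though harmless.
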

\begin{proof} Let $0<t<1$ and $x, y\in \R^n$ with $F(x)\neq F(y)$.
The desired result is obtained by showing that
$f_t\doteq tF(x)+(1-t)F(y)\in F(\R^n)$.\\
$(a)$: By assumption $\langle h_\bot,h_1\rangle\not=0$, and therefore,
from \eqref{ine:denso} and Proposition \ref{felo0} one gets, for all $x_0\in X$,
\begin{equation}\label{eq:01}
H(x_0)\doteq \{ u:~\langle h_\bot,h_1\rangle\langle h_\bot, u-F(x_{0})\rangle>0\}
\subseteq F(\R^n).
\end{equation}
The desired result is obtained by showing that
$f_t\doteq tF(x)+(1-t)F(y)\in H(x_0)$ for some $x_0\in X$. We distinguish two
cases.\\
$(a1)$: $\langle h_\bot,h_1\rangle\langle h_\bot, F(y)-F(x)\rangle >0$ (the case
``$<$'' is entirely similar). Since
$$
\langle h_\bot,h_1\rangle\langle h_\bot, f_t-F(x)\rangle>0,
$$
by densedness and continuity, we get $\bar x\in X$ close to $x$ such that
$f_t\in H(\bar x)$, and so $f_t\in F(\R^n)$ by \eqref{eq:01}.\\
$(a2)$: $\langle h_\bot,h_1\rangle\langle h_\bot, F(y)-F(x)\rangle =0$.
Let us consider the functions $q_1:\R\to\R^2$ and $q:\R\to\R$ defined by
$$
q_1(\lambda)\doteq F(\lambda x+(1-\lambda)y),~~
q(\lambda)\doteq \langle h_\bot,h_1\rangle\langle h_\bot, q_1(\lambda)-F(x)\rangle.
$$
Clearly $q$ is quadratic satisfying $q(0)=q(1)=0$.  Let us consider first that
$q\equiv 0$. Due to continuity $q_{1}([0, 1])$ is a connected set contained
in the line $F(x)+\R h$ passing through $F(x)$ and $F(y)$. Thus,
$f_t \in q_{1}([0, 1])\subseteq F(\R^n)$. \\
We now consider $q\not\equiv 0$. Then there exists  $\lambda_{1}\in\R$
satisfying $q(\lambda_{1})<0$, i. e.,
$$\langle h_{\bot},\, h_{1}\rangle \langle h_{\bot},\, F(\lambda_{1}x+
(1-\lambda_{1})y)-f_{t}\rangle =
\langle h_{\bot}, h_{1}\rangle \langle h_{\bot}, F(\lambda_{1}x+
(1-\lambda_{1})y)-F(x)\rangle<0.$$
Hence by taking $\bar x\in X$ near $\lambda_{1}x+(1-\lambda_{1})y$, we obtain
$\langle h_{\bot}, h_{1}\rangle \langle h_{\bot}, f_{t}-F(\bar x)\rangle >0$, and
so $f_{t}\in F(\R^n)$ by \eqref{eq:01}.\\
$(b)$: As $\{ h_{1}, h\} $ is LD, then \eqref{ine:denso} means that for all
$x_0\in Y$,
$$H_0(x_0)\doteq\{u\in\R^2:~\langle h_{\bot}, u-F(x_0)\rangle =0\}\subseteq
F(\R^{n}).$$
Let  $q(\lambda)=\langle h_{\bot}, F(\lambda x+(1-\lambda)y)-f_{t}\rangle $.
Then $q$ is continuous and $q(0)=t\langle h_{\bot}, F(y)-F(x)\rangle $,
$q(1)=(1-t)\langle h_{\bot}, F(x)-F(y)\rangle $. We observe that either
$q(0)=0=q(1)$ or $q(0)q(1)<0$. In the first case $q(\lambda)=0$ for all
$\lambda\in\R$, and so $f_t\in F(\R^n)$. In case of opposite sign, we get
$\lambda_0\in~]0,1[$ such that $q(\lambda_0)=0$, which implies that
$f_t\in H_0(\lambda_0 x+(1-\lambda_0)y)\subseteq F(\R^n)$.
\end{proof}

\section{Characterizing SD and ND in two dimensional spaces}\label{secc3}

This section is devoted to characterizing the simultaneous diagonalization and
non degenerate properties for a pair of matrices in terms of its homogeneous
quadratic forms. As one may found in the literature, the study in $\R^2$ deserves
a special treatment from $\R^n$, $n\geq 3$, and to the best knowledge of these
authors the following characterizations are new. As said before, here
$A,~B\in{\mathcal S}^2$.

We start by a simple proposition appearing elsewhere whose proof is presented here
just for reader's convenience.

\begin{prop} \label{sev:impl} Let us consider the assertions:
\begin{itemize}
\item[$(a)$] $F_H(\R^2)=\R^2;$
\item[$(b)$] {\rm ND} holds for $A$ and $B;$
\item[$(c)$] $F_H(\R^2)$ is closed.
\end{itemize}
Then $(a)\Longrightarrow(b)\Longrightarrow(c)$.
\end{prop}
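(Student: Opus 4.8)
The plan is to exploit the homogeneity of $F_H$ to reduce everything to the image of the unit circle and then to use elementary convex geometry. Writing $S=\{u\in\R^2:\|u\|=1\}$, since $F_H(tu)=t^2F_H(u)$ one has $F_H(\R^2)=\mathrm{cone}(F_H(S))$, where $F_H(S)$ is compact. Parametrizing $u=(\cos\theta,\sin\theta)$ and using the double-angle identities, I would rewrite $F_H(u)=c+Mw$, with $c=\tfrac12(\mathrm{tr}\,A,\mathrm{tr}\,B)$, $w=(\cos 2\theta,\sin 2\theta)$, and $M$ the $2\times 2$ matrix whose rows are determined by the entries of $A$ and $B$. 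Hence $F_H(S)=\{c+Mw:\|w\|=1\}$ lies on the boundary of the compact convex set $D\doteq c+M\overline B$ (an elliptical disc, possibly degenerating to a segment or a point), where $\overline B$ denotes the closed unit disc. The observation linking (b) to this picture is that ND fails precisely when $0\in F_H(S)$, equivalently $0\in\mathrm{bd}\,D$.

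For $(a)\Rightarrow(b)$ I would argue by contraposition. Assuming ND fails, we have $0\in F_H(S)\subseteq\mathrm{bd}\,D$, so the supporting hyperplane theorem yields $\xi\neq 0$ with $\langle\xi,d\rangle\ge 0$ for all $d\in D$, and in particular for all $d\in F_H(S)$. Since multiplication by $t\ge 0$ preserves this inequality, $F_H(\R^2)=\mathrm{cone}(F_H(S))$ is contained in the closed half-plane $\{z:\langle\xi,z\rangle\ge 0\}$, whence $F_H(\R^2)\neq\R^2$. This is exactly the contrapositive of $(a)\Rightarrow(b)$.

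For $(b)\Rightarrow(c)$, the quickest route is to invoke Dines' Theorem \ref{din1941}: under ND, $F_H(\R^2)$ is either $\R^2$ or closed and pointed, and in both cases it is closed. If instead one prefers a self-contained argument, note that ND gives $0\notin F_H(S)$, so $F_H(S)$ is compact with $\delta\doteq\min_{d\in F_H(S)}\|d\|>0$; then a routine sequential argument (any convergent sequence $t_nk_n\to z$ with $t_n\ge 0$ and $k_n\in F_H(S)$ forces $t_n$ to be bounded, since otherwise $\|t_nk_n\|\ge t_n\delta\to\infty$) shows that $\mathrm{cone}(F_H(S))=F_H(\R^2)$ is closed.

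The step I expect to require the most care is the reduction in $(a)\Rightarrow(b)$: I must ensure that the claim $F_H(S)\subseteq\mathrm{bd}\,D$ and the existence of a supporting line at $0$ remain valid when $M$ is singular, that is, when the conic $F_H(S)$ degenerates to a segment or a single point. In those cases $D$ has empty interior in $\R^2$, so $\mathrm{bd}\,D=D$ and every point of $D$ (in particular $0$) admits a supporting line; the argument then goes through uniformly without a separate case analysis. The remaining verifications — the double-angle computation identifying $c$ and $M$, and the equivalence ``ND fails $\iff 0\in F_H(S)$'' — are routine.
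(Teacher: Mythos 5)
Your proposal is correct, but the route you take for $(a)\Rightarrow(b)$ is genuinely different from the paper's. The paper argues algebraically: if $F_H(u)=0$ for some $u\neq 0$, it picks $v$ with $\{u,v\}$ LI and expands $F_H(\alpha u+\beta v)=\alpha^2F_H(u)+\beta^2F_H(v)+2\alpha\beta z_{u,v}$ to conclude $F_H(\R^2)\subseteq\R_+F_H(v)+\R z_{u,v}$, a set which is at most a closed half-plane; no convex geometry is invoked, only the polarization identity that also drives Theorems \ref{thm:0} and \ref{nd0:eq}. You instead make the classical ``Dines ellipse'' explicit via the double-angle parametrization $F_H(S)=c+M(S^1)$, identify failure of ND with $0\in F_H(S)\subseteq\mathrm{bd}\,D$, and apply the supporting hyperplane theorem at $0$; your handling of the degenerate case (singular $M$, so $D$ has empty interior and $\mathrm{bd}\,D=D$) is the right thing to check and does go through, since in finite dimensions every boundary point of a convex set admits a supporting hyperplane. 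What your approach buys is a concrete geometric picture of $F_H(\R^2)$ as the cone over an affine image of a circle — information that in fact illuminates the closedness discussion and Theorem \ref{thm:0} — at the cost of a heavier tool and a case analysis; the paper's two-line algebraic argument is more economical and yields an explicit containing half-plane. For $(b)\Rightarrow(c)$ your self-contained compactness argument ($\delta=\min_{F_H(S)}\|\cdot\|>0$ forces boundedness of the scalars) is essentially the paper's normalization argument in different clothing, and citing Theorem \ref{din1941} would also be legitimate since the paper states it as a known external result.
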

\begin{proof} $(a)\Rightarrow(b)$: Let $u\in\R^2$ satisfying $F_H(u)=0$.
If on the contrary $u\not=0$, then by taking $v\in\R^2$ such that
$\{u, v\}$ is linearly independent, we obtain for $\alpha,~\beta\in\R$,
$$F_H(\alpha u+\beta v)=\alpha^2 F_H(u)+\beta^2 F_H(v)+2\alpha\beta z_{u,v}.$$
Thus $F_H(\R^2)\subseteq \R_+ F_H(v)+\R z_{u,v}$, which is impossible if
$F_H(\R^2)=\R^2$.\\
$(b)\Rightarrow(c)$: Let $F_H(x_k)$ be a sequence such that $F_H (x_k)\to z$.
In case $\|x_k\|$ is bounded, there is nothing to do. If $\|x_k\|$ is unbounded,
up to a subsequence, we may suppose that $\|x_k\|\to+\infty$ and
$\dfrac{x_k}{\|x_k\|}\to u$. Thus $\|u\|=1$ and
$$\dfrac{1}{\|x_k\|^2}F_H(x_k)=F_H(\dfrac{x_k}{\|x_k\|})\to F_H(u)=0,$$
which yields, by assumption, $u=0$, a contradiction.
\end{proof}

Example \ref{ej:op00} below shows that $(a)\Longrightarrow(b)$ may fail in higher
dimension. However, for $n\geq 3$, one obtains that $(a)$ implies the existence
of $u\in\R^n$, $u\not=0$, such  that $F_H(u)=0$, as Corollary in
\cite[p. 401]{heste1968} shows. We also point out the proof for proving $(b)$
implies $(c)$ remains valid for any dimension, see also Theorem 6 in
$\cite{heste1968}$.

\begin{ex}\label{ej:op00} Take
$$A=\begin{pmatrix}
1 & 0 & 0\cr
0 & 0 & 0\cr
0 & 0 & -1
\end{pmatrix},~~~
B=\begin{pmatrix}
0 & 0 & 0\cr
0 & 1 & 0\cr
0 & 0 & -1
\end{pmatrix},$$
Then, $F_H(\R^3)=\R^2$, but ND does not hold for $A$ and $B$.
\end{ex}

Next result provides a new characterization for SD in two dimension.

\begin{thm} \label{thm:0}
The following statements are equivalent:
\begin{itemize}
\item[$(a)$] {\rm SD} holds for $A$ and $B;$
\item[$(b)$] $\exists$ $u, v\in\R^2$ such that $F_H(\R^2)=\R_+u+\R_+v;$
\item[$(c)$] $F_H(\R^2)$ is closed and $F_H(\R^2)\not=\R^2$.
\end{itemize}
\end{thm}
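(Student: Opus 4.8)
The plan is to establish the cycle $(a)\Rightarrow(b)\Rightarrow(c)\Rightarrow(a)$, using Dines' theorem (Theorem \ref{din1941}) together with the observation that both \emph{SD} and the closedness of $F_H(\R^2)$ are invariant under replacing the pair $(A,B)$ by any two independent linear combinations of $A$ and $B$: such a recombination transforms $F_H(\R^2)$ by an invertible linear map of $\R^2$ (preserving closedness and properness), and a congruence simultaneously diagonalizes $A,B$ iff it does so for any nonsingular recombination. For $(a)\Rightarrow(b)$ I would pick LI vectors $u_1,u_2$ with $z_{u_1,u_2}=0$; since $F_H(\alpha u_1+\beta u_2)=\alpha^2 F_H(u_1)+\beta^2 F_H(u_2)+2\alpha\beta z_{u_1,u_2}$ and the cross term vanishes, letting $(\alpha,\beta)$ range over $\R^2$ (so $(\alpha^2,\beta^2)$ covers $\R_+^2$) gives $F_H(\R^2)=\R_+F_H(u_1)+\R_+F_H(u_2)$; take $u=F_H(u_1)$, $v=F_H(u_2)$. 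For $(b)\Rightarrow(c)$ the conical hull $\R_+u+\R_+v$ of two vectors is a finitely generated cone, hence closed, and it is contained in a closed half-plane (two vectors always admit a common nonnegative functional), so it cannot be all of $\R^2$.

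The substance is $(c)\Rightarrow(a)$. By Dines, $F_H(\R^2)$ is a convex cone; being closed and proper, it lies in a closed half-plane through the origin, i.e. there is $(\lambda,\mu)\neq 0$ with $\lambda f_H(x)+\mu g_H(x)\geq 0$ for all $x$, that is $\lambda A+\mu B\succcurlyeq 0$. After recombining I may therefore assume $A\succcurlyeq 0$. If $A\succ 0$, then $A^{-1/2}BA^{-1/2}$ is symmetric and its orthogonal diagonalization yields a congruence simultaneously diagonalizing $A$ and $B$; if $A=0$, any orthogonal diagonalizer of $B$ works. The delicate case is $\mathrm{rank}\,A=1$: writing $A=aa^\top$ with kernel direction $k\perp a$, I use $\langle Au_1,u_2\rangle=\langle a,u_1\rangle\langle a,u_2\rangle$, so the choice $u_1=k$ already forces $\langle Ak,u_2\rangle=0$, and \emph{SD} follows as soon as I can select $u_2\notin\R k$ with $\langle Bk,u_2\rangle=0$. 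This succeeds when $Bk=0$ (take $u_2=a$) and when $Bk\neq 0$ with $\langle Bk,k\rangle\neq 0$ (take $u_2\in(Bk)^\perp$, a line distinct from $\R k$).

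The main obstacle, and the only point where closedness is genuinely used, is the residual sub-case $Bk\neq 0$ and $\langle Bk,k\rangle=0$. I would dispose of it by a direct computation ruling it out under $(c)$: in the orthonormal basis $\{a/\|a\|,\,k/\|k\|\}$ one gets $A=\mathrm{diag}(\alpha,0)$ with $\alpha>0$ and $B=\left(\begin{smallmatrix}p&q\\ q&0\end{smallmatrix}\right)$ with $q\neq 0$, whence $F_H(\R^2)=\{(\alpha u_1^2,\,pu_1^2+2qu_1u_2):u_1,u_2\in\R\}=\{(x,y):x>0\}\cup\{(0,0)\}$, which is not closed. This contradicts $(c)$, so the sub-case never arises and \emph{SD} holds in every admissible configuration, closing the cycle. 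The whole argument is short except for this rank-$1$ analysis, where the geometry of $F_H(\R^2)$ (its failure to be closed in the degenerate configuration) is exactly what the hypothesis $(c)$ excludes.
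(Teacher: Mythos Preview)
Your proof is correct. The implications $(a)\Rightarrow(b)$ and $(b)\Rightarrow(c)$ match the paper's argument. For $(c)\Rightarrow(a)$ the two proofs diverge genuinely.

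The paper proceeds geometrically: it first shows $F_H(\R^2)$ cannot be a half-plane (otherwise, picking $u,v$ with $F_H(u)=p_\perp=-F_H(v)$ and expanding $F_H(\alpha u+\beta v)$ forces the image onto the boundary line), then runs through the remaining shapes of a closed convex cone in $\R^2$ --- origin, ray, line, pointed sector --- and in each case constructs the LI pair with $z_{u,v}=0$ directly, by adjusting $v\mapsto v-\lambda u$ to kill the cross term. Your route is matrix-theoretic: from containment in a half-plane you extract a positive semidefinite combination $\lambda A+\mu B$, recombine so that $A\succcurlyeq 0$, and invoke the classical congruence diagonalization when $A\succ 0$; only the rank-$1$ case needs work, and there your explicit computation showing $F_H(\R^2)=\{(0,0)\}\cup\{x>0\}$ is exactly the mechanism that uses closedness. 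What your approach buys is a clean localization of where hypothesis $(c)$ is genuinely needed (precisely the degenerate rank-$1$ sub-case), and a shortcut in the generic case via the standard $A\succ 0$ result; what the paper's approach buys is complete self-containment, with no appeal to the classical SD-from-positive-definiteness theorem and no recombination bookkeeping.
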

\begin{proof} $(a)\Longrightarrow(b)$: By assumption, there exist LI vectors
$x, y\in\R^2$, such that $z_{x,y}=0$. Thus,
$F_H(\R^2)=\{F_H(\alpha x+\beta y):~\alpha, \beta\in\R\}$.
From the equality  $F_H(\alpha x+\beta y)=\alpha^2 F_H(x)+\beta^2 F_H(y)$
the desired conclusion is obtained.\\
$(b)\Longrightarrow(c)$: it is straightforward.\\
$(c)\Longrightarrow(a)$: We already know that $F_H(\R^2)$ is a convex cone.
We first check that $F_H(\R^2)$ cannot be a halfspace. Indeed, suppose that
$F_H(\R^2)=\{y\in\R^2:~\langle p, y\rangle\geq 0\}$ for some $p\in\R^2$,
$p\not=0$. Then there exist $u, v\in\R^2$ such that $F_H(u)=p_\perp$ and
$F_H(v)=-p_\perp$, which imply that $\{u, v\}$ is LI. Since
$F_H(\alpha u+\beta v)=\alpha^2 F_H(u)+\beta^2F_H(v)+2\alpha\beta z_{u,v}$  for all
$\alpha, \beta\in\R$, we get $2\alpha\beta\langle p,z_{u,v}\rangle\geq 0$ for all
$\alpha, \beta\in\R$. Hence $\langle p, z_{u,v}\rangle=0$, and therefore
$F_H(\R^2)=\{y\in\R^2:~\langle p, y\rangle= 0\}$.\\
Thus, the set $F_H(\R^2)$ may be $(i)$ the origin $\{0\}$; $(ii)$ a ray; $(iii)$ a 
pointed cone,  $(iv)$ a straightline.  \\
$(i)$: We simply take any two LI vectors
$u$ and $v$. Indeed, since $F_H(u)=F_H(v)=F_H(u+v)=0$, we obtain $z_{u,v}=0$.\\
$(ii)$: Assume that $F_H(\R^2)=\R_+p$, and take $u\in\R^2$ such that
$F_H(u)=p$, and choose $v\in\R^2$ so that $\{u,v\}$ is LI. In case
$z_{u,v}\not=0$, we proceed as follows. Since
$F_H(u+v)=F_H(u)+F_H(v)+2z_{u,v}$, we obtain
$0= \langle p_{\perp}, z_{u,v}\rangle$, which implies that
$z_{u,v}=\lambda p$
for some $\lambda\in\R$. It follows that $z_{u,v-\lambda u}=0$ with
$\{u,v-\lambda u\}$ being LI, and therefore SD holds.\\
$(iii)$: We have, for some LI vectors $p, q$ (see Proposition \ref{felo0})
\begin{equation}\label{rep:f:h}
F_H(\R^2)=\R_+p+\R_+q=\{z\in\R^2:
\langle p_\perp,q\rangle\langle p_\perp,z\rangle\geq 0,~
\langle q_\perp,p\rangle\langle q_\perp,z\rangle\geq 0\},
\end{equation}
with the property $\langle p_\perp,q\rangle=-\langle q_\perp,p\rangle\not=0$.
Take $u, v$ in $\R^2$ satisfying $F_H(u)=p$, $F_H(v)=q$. It follows that
$u$ and $v$ are LI. From \eqref{rep:f:h}, we get
in particular,
$\langle p_\perp,q\rangle\langle p_\perp,F_H(t u+v)\rangle\geq 0$,
for all $t\in\R$. This implies that
$\langle p_\perp,z_{u,v}\rangle= 0$. Similarly one obtains
$\langle q_\perp,z_{u,v}\rangle= 0$. Thus $z_{u,v}=0$, which is the desired
result.\\
$(iv)$: This case is similar to $(ii)$. Take $u, v\in\R^2$ such that
$F_H(u)=p_\perp$, $F_H(v)=-p_\perp$, which imply that $\{u, v\}$ is LI. Hence
$\{u,v-\lambda u\}$ is LI for some $\lambda\in\R$ and $z_{u,v-\lambda u}=0$.
\end{proof}

Next example illustrates that $u$ and $v$ need  not to be LI in the
previous theorem; Example \ref{ej:op00} shows that $(a)$ does not imply
$(b)$ in higher dimension, since we get 
$\R^2=F_H(\R^3)=\R_+(1,0)+\R_+(0,1)+\R_+(-1,-1)$, and
clearly SD holds for $A$ and $B$; whereas Example \ref{ej:reff} exhibits 
an instance where without the closedness of $F_H(\R^2)$ the implication
$(c)$ $\Longrightarrow$ $(a)$ may fail.  
\begin{ex} \label{ej:op1} Take
$$A=\begin{pmatrix}
0 & 1\cr
1 & 0
\end{pmatrix},~~~
B=0,
$$
Then, by choosing
$$C=\begin{pmatrix}
1 & 1\cr
-1 & 1
\end{pmatrix},
$$
we get that $C^\top AC$ is diagonal. It is easy to see that
$$F_H(\R^2)=\R_+(1,0)+\R_+(-1,0)=\R\times\{0\}.$$
\end{ex}

\begin{ex}\label{ej:reff} Consider
$$A=\begin{pmatrix}
1 & 1\cr
1 & 1
\end{pmatrix},~
B=\begin{pmatrix}
1 & 0\cr
0 & -1
\end{pmatrix}.
$$
Then, even if 
$$F_H(x_1,x_2)=(x_1+x_2)^2(1,0)+(x_1^2-x_2^2)(0,1),$$
one obtains $F_H(\R^2)=(\R_{++}\times \R)\cup\{(0,0)\}$, which is not closed and
clearly SD does not  hold for $A$ and $B$.
\end{ex}

We are now in a position to establish a new characterization for ND in $\R^2$.
\begin{thm}\label{nd0:eq}
The following assertions are equivalent:
\begin{itemize}
\item[$(a)$] {\rm ND} holds for $A$ and $B;$
\item[$(b)$] ${\rm ker}~A\cap{\rm ker}~B=\{0\}$ and $F_H(\R^2)$ is a closed set
different from a line.
\end{itemize}
\end{thm}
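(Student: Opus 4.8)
The plan is to prove the two implications separately, using throughout that $F_H(\R^2)$ is a convex cone (Theorem \ref{din1941}) and the implications of Proposition \ref{sev:impl}.

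For $(a)\Rightarrow(b)$: recall ND says $F_H(u)=0\Rightarrow u=0$. First I would observe that any $u\in{\rm ker}~A\cap{\rm ker}~B$ satisfies $F_H(u)=(\langle Au,u\rangle,\langle Bu,u\rangle)=0$, so ND forces $u=0$, giving ${\rm ker}~A\cap{\rm ker}~B=\{0\}$; closedness of $F_H(\R^2)$ is precisely the implication $(b)\Rightarrow(c)$ of Proposition \ref{sev:impl}. It then remains to exclude that $F_H(\R^2)$ is a line. Since $F_H(\R^2)$ is a convex cone, a line must be of the form $\R p$ with $p\neq0$. Writing $p=(p_1,p_2)$ and $p_\perp=(\sigma,\tau)$, the inclusion $F_H(\R^2)\subseteq\R p$ makes the form $u\mapsto\langle p_\perp,F_H(u)\rangle=\langle(\sigma A+\tau B)u,u\rangle$ vanish identically, while $u\mapsto\langle p,F_H(u)\rangle=\langle(p_1A+p_2B)u,u\rangle$ has range $\R$ (as $F_H(\R^2)=\R p$), hence is an indefinite $2\times2$ symmetric form. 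Such a form has a nonzero isotropic vector $u_0$ (take $u_0=\sqrt{-\lambda_2}\,e_1+\sqrt{\lambda_1}\,e_2$ from an eigenbasis with $\lambda_1>0>\lambda_2$); then $\langle p,F_H(u_0)\rangle=0=\langle p_\perp,F_H(u_0)\rangle$ yields $F_H(u_0)=0$ with $u_0\neq0$, contradicting ND. So $F_H(\R^2)$ is not a line.

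For $(b)\Rightarrow(a)$: I would argue by contradiction, supposing $F_H(u_0)=0$ for some $u_0\neq0$. Pick $v$ so that $\{u_0,v\}$ is a basis of $\R^2$ and set $P\doteq F_H(v)$, $w\doteq z_{u_0,v}$; expanding gives $F_H(\alpha u_0+\beta v)=\beta^2P+2\alpha\beta w$, so $F_H(\R^2)=\{\beta^2P+2\alpha\beta w:\alpha,\beta\in\R\}$. Now I split into three cases. If $w=0$, then $\langle Au_0,v\rangle=0=\langle Au_0,u_0\rangle$, and likewise for $B$, so $Au_0=Bu_0=0$, i.e. $u_0\in{\rm ker}~A\cap{\rm ker}~B\setminus\{0\}$, contradicting $(b)$. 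If $w\neq0$ and $P\in\R w$, then $\beta^2P+2\alpha\beta w$ is always a multiple of $w$, and its scalar coefficient runs over all of $\R$ as $\alpha,\beta$ vary, so $F_H(\R^2)=\R w$ is a line, again contradicting $(b)$. Finally, if $\{P,w\}$ is LI, letting $\beta\neq0$ (so $\beta^2>0$ arbitrary) with $\alpha$ free shows $F_H(\R^2)=\{0\}\cup\{s_1P+s_2w:s_1>0,\ s_2\in\R\}$, an open half-plane together with the origin; this set is not closed (for instance $w$ lies in its closure but not in it), contradicting the closedness in $(b)$. Each case is impossible, so ND holds.

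I expect the delicate point to be the last case of $(b)\Rightarrow(a)$: recognizing that when $\{P,w\}$ is LI the range is a half-plane missing its entire boundary line except the origin, so that it is exactly the closedness hypothesis — not the kernel or line hypotheses — that is violated there. The forward direction is routine apart from the observation that a line-shaped range forces an indefinite combination $p_1A+p_2B$, whose isotropic vector destroys ND.
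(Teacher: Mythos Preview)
Your proof is correct, and it takes a genuinely different route from the paper's.

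The paper funnels both implications through Theorem~\ref{thm:0} (the SD characterization). For $(a)\Rightarrow(b)$ it argues: if $F_H(\R^2)\neq\R^2$ then closedness forces SD, so there is an LI pair $u,v$ with $z_{u,v}=0$, and from $F_H(\alpha u+\beta v)=\alpha^2F_H(u)+\beta^2F_H(v)$ together with ND one checks the image cannot be a line. For $(b)\Rightarrow(a)$ it again uses closedness to split into $F_H(\R^2)=\R^2$ (whence ND by Proposition~\ref{sev:impl}) or SD; in the SD case it writes any $w$ with $F_H(w)=0$ in the SD basis and uses the kernel and ``not a line'' hypotheses to force $w=0$.

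You bypass SD entirely. In the forward direction you note that a line image $\R p$ makes $\langle p_\perp,F_H(\cdot)\rangle\equiv0$ while $\langle p,F_H(\cdot)\rangle$ has full range $\R$, hence $p_1A+p_2B$ is an indefinite $2\times2$ form whose isotropic vector contradicts ND directly. In the reverse direction you parametrize by a hypothetical ND-violator $u_0$ and a complement $v$, compute $F_H(\R^2)=\{\beta^2P+2\alpha\beta w\}$, and show in three clean cases that exactly one clause of $(b)$ fails (kernel, line, or closedness respectively). This is more self-contained---it does not rely on Theorem~\ref{thm:0}---and makes transparent which hypothesis is doing the work in each case. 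The paper's route, on the other hand, foregrounds the ND/SD relationship and flows naturally into Corollary~\ref{cor0:eq}.
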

\begin{proof} $(a)\Longrightarrow(b)$: The first part of $(b)$ is straightforward,
and the closedness of $F_H(\R^2)$ is a consequence of Proposition \ref{sev:impl}.
It remains only to prove that $F_H(\R^2)$ is different from a line. In case
$F_H(\R^2)=\R^2$, we are done; thus suppose that $F_H(\R^2)\not=\R^2$. By
Theorem \ref{thm:0}, we have SD, that is, there exist $u, v\in\R^2$, LI, such that
$z_{u,v}=0$. This means $F_H(\alpha u+\beta v)=\alpha^2 F_H(u)+\beta^2 F_H(v)$ for
all $\alpha, \beta\in\R$. Obviously $F_H(u)\not=0\not=F_H(v)$, and if
$F_H(u)=-\lambda^2 F_H(v)$ for some $\lambda\not=0$, then $F_H(u+\lambda v)=0$.
This implies that $u+\lambda  v=0$ which is impossible, therefore
$F_H(u)\not=-\lambda^2 F_H(v)$ for all $\lambda\not=0$. Thus
$F_H(\R^2)=\{\alpha^2 F_H(u)+\beta^2 F_H(v):~\alpha, \beta \in\R\}$
is not a line.\\
$(b)\Longrightarrow(a)$: Since $F_H(\R^2)$ is closed, by Theorem \ref{thm:0},
either $F_H(\R^2)=\R^2$ or SD holds. In the first case, Proposition
\ref{sev:impl} implies that $(a)$ is satisfied. Assume that SD holds, as before,
there exist $u, v\in\R^2$, LI, such that $z_{u,v}=0$. Let $w\in\R^2$ satisfying
$F_H(w)=0$, we claim that $w=0$. By writting $w=\lambda_1 u+\lambda_2 v$ for
some $\lambda_i\in\R$, $i=1, 2$, we get
$F_H(w)=\lambda_1^2F_H(u)+\lambda_2^2F_H(v)$. We distinguish various situations.
If $F_H(u)=0$ (resp. $F_H(v)=0$), then $\langle Au,u\rangle=0$ and
$\langle Bu,u\rangle=0$
(resp. $\langle Av,v\rangle$ and $\langle Bv,v\rangle=0$), which
along with $\langle Au,v\rangle=0$ and $\langle Bu,v\rangle=0$, allow us to infer
$Au=0=Bu$ (resp. $Av=0=Bv$). It follows that $u=0$ (resp. $v=0$), which is
impossible.\\
We now consider $F_H(u)\not=0\not=F_H(v)$. Suppose, on the contrary, that
$\lambda_i\not=0$ for $i=1, 2$. Then, from
$F_H(w)=\lambda_1^2F_H(u)+\lambda_2^2F_H(v)=0$, we obtain
$F_H(u)=-\lambda F_H(v)$ for some $\lambda>0$. This yields that
$F_H(\R^2)=\{\alpha^2 F_H(u)+\beta^2 F_H(v):~\alpha, \beta \in\R\}$
is a line, a contradiction. Hence $\lambda_i=0$ for $i=1, 2$, and so $w=0$,
completing the proof.
\end{proof}

The same proof of the previous theorem allows us to obtain the next result
which establishes a relationship between  ND and SD.
\begin{cor}\label{cor0:eq}
The following assertions are equivalent:
\begin{itemize}
\item[$(a)$] $F_H(\R^2)\not=\R^2$ and {\rm ND} holds$;$
\item[$(b)$] ${\rm ker}~A\cap{\rm ker}~B=\{0\}$, $F_H(\R^2)$ is different from
a line and {\rm SD} holds;
\item[$(c)$] $\exists$ $(\lambda_1,\lambda_2)\in\R^2$ such that
$\lambda_1 A+\lambda_2 B\succ 0$.
\end{itemize}
\end{cor}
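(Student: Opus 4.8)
The plan is to derive the equivalence $(a)\Longleftrightarrow(b)$ by combining Theorem \ref{nd0:eq} with Theorem \ref{thm:0}, and then to connect both with $(c)$ through the geometry of the convex cone $F_H(\R^2)$ provided by Theorem \ref{din1941}. The crucial observation that links $(c)$ to everything else is the pointwise identity $\langle(\lambda_1 A+\lambda_2 B)x,x\rangle=\langle p,F_H(x)\rangle$ valid for every $x\in\R^2$, where $p=(\lambda_1,\lambda_2)$; consequently $\lambda_1 A+\lambda_2 B\succ 0$ holds if and only if $p\neq 0$ and $\langle p,F_H(x)\rangle>0$ for all $x\neq 0$.

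For $(a)\Longleftrightarrow(b)$ I would argue as follows. Assuming $(a)$, Theorem \ref{nd0:eq} yields ${\rm ker}\,A\cap{\rm ker}\,B=\{0\}$ with $F_H(\R^2)$ closed and different from a line; since in addition $F_H(\R^2)\neq\R^2$, the implication $(c)\Longrightarrow(a)$ of Theorem \ref{thm:0} provides SD, which is exactly $(b)$. Conversely, from $(b)$ the implication $(a)\Longrightarrow(c)$ of Theorem \ref{thm:0} shows that $F_H(\R^2)$ is closed and $\neq\R^2$; combined with ${\rm ker}\,A\cap{\rm ker}\,B=\{0\}$ and $F_H(\R^2)$ not being a line, Theorem \ref{nd0:eq} returns ND, while closedness together with SD forces $F_H(\R^2)\neq\R^2$, giving $(a)$.

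It remains to fold in $(c)$. The implication $(c)\Longrightarrow(a)$ is immediate from the displayed identity: $\langle p,F_H(x)\rangle>0$ for $x\neq 0$ rules out $F_H(x)=0$ with $x\neq 0$ (so ND holds) and confines $F_H(\R^2)$ to the halfspace $\{z:\langle p,z\rangle\geq 0\}\neq\R^2$, whence $F_H(\R^2)\neq\R^2$. For $(a)\Longrightarrow(c)$ I would invoke $(a)\Longleftrightarrow(b)$ and Theorem \ref{thm:0} to write $F_H(\R^2)=\R_+u+\R_+v$, a closed convex cone that is not a line by Theorem \ref{nd0:eq} and not $\{0\}$ by ND; recalling from the proof of Theorem \ref{thm:0} that such a cone cannot be a halfspace, it must be either a ray or a pointed cone. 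In either case $F_H(\R^2)$ is a pointed closed convex cone distinct from the origin, hence contained in an open halfspace together with the origin, which furnishes $p\neq 0$ with $\langle p,z\rangle>0$ for all $z\in F_H(\R^2)\setminus\{0\}$; ND then upgrades this to $\langle p,F_H(x)\rangle>0$ for every $x\neq 0$, i.e.\ $(c)$.

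The step I expect to be the main obstacle is this final geometric one: ensuring that $F_H(\R^2)$ is pointed and therefore admits a strictly positive functional. The delicate points are excluding the halfspace case (which is already disposed of inside the proof of Theorem \ref{thm:0}, where a closed $F_H(\R^2)\neq\R^2$ that is a halfspace is shown to collapse to a line) and handling the degenerate ray case, in which $F_H(\R^2)$ reduces to a single ray yet ND still holds (as for $A=B$ positive definite); there a strictly positive $p$ nevertheless exists, and ND is precisely what converts strict positivity on the cone into strict positive definiteness of the pencil.
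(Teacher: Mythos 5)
Your proof is correct. For the equivalence $(a)\Longleftrightarrow(b)$ you follow essentially the same route as the paper, combining Theorem \ref{thm:0} with Theorem \ref{nd0:eq} (the paper additionally leans on the \emph{proof} of Theorem \ref{nd0:eq} for $(b)\Rightarrow(a)$, but as you observe the stated equivalences suffice: SD gives closedness and $F_H(\R^2)\neq\R^2$ via Theorem \ref{thm:0}, and then Theorem \ref{nd0:eq} returns ND). Where you genuinely diverge is in the treatment of $(c)$: the paper simply cites Corollary 1 of Dines (1941, p.~498) for $(a)\Longleftrightarrow(c)$, valid for all $n\geq 2$, whereas you give a self-contained argument built on the identity $\langle(\lambda_1A+\lambda_2B)x,x\rangle=\langle p,F_H(x)\rangle$ and the classification of $F_H(\R^2)=\R_+u+\R_+v$ as a ray or a pointed sector (never a halfspace, and not a line by Theorem \ref{nd0:eq}), so that a strictly separating functional $p$ exists and ND converts strict positivity on the cone minus the origin into positive definiteness of the pencil. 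This buys a proof that does not depend on the external reference, at the cost of being tied to $n=2$ through Theorem \ref{thm:0} (which is harmless here, since the corollary is stated for $\R^2$), while the cited Dines result covers arbitrary dimension. All the delicate points you flag --- excluding the halfspace, handling the single-ray case --- are handled correctly.
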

\begin{proof} $(a)\Longrightarrow(b)$ follows from Theorems \ref{thm:0} and
\ref{nd0:eq}; whereas the reverse implication is derived from the proof of
the previous theorem. The equivalence between $(a)$ and $(c)$ is Corollary 1 in
\cite[page 498]{dines1941}, valid for all $n\geq 2$.
\end{proof}

\section{Dines-type theorem for inhomogeneous quadratic functions and
relatives}\label{secc4}

This section is devoted to proving a generalization of Dines theorem for
inhomogeneous quadratic functions. Set
\begin{equation}\label{bi:quad0}
f(x)\doteq \langle Ax,x\rangle+\langle a,x\rangle,~
g(x)\doteq \langle Bx,x\rangle+\langle b, x\rangle,
\end{equation}
and, as before $F(x)=(f(x),g(x))$, so that $F(0)=(0,0)$.

We first deal with the one-dimensional case and afterward the general situation.
\subsection{The case of one-dimension}
We begin with the following useful simple result.
\begin{prop}\label{co:00} Let $u\in \R^n$, $u\not=0$. Then
\begin{itemize}
\item[$(a)$] $F(\R u)=\{\alpha^2F_H(u)+\alpha F_L(u):~\alpha\in\R\};$
\item[$(b)$] ${\rm co}~F(\R u)=F(\R u)+\R_{+}F_{H}(u)$.
\end{itemize}
\end{prop}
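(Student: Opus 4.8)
The plan is to prove both parts by direct computation, exploiting the fact that along a single ray $\R u$ the inhomogeneous quadratic $F$ restricts to a vector-valued quadratic in the scalar parameter $\alpha$. For part $(a)$, I would simply substitute $x = \alpha u$ into the definitions in \eqref{bi:quad0}. Since $f_H(x) = \langle Ax, x\rangle$ is homogeneous of degree $2$ and $f_L(x) = \langle a, x\rangle$ is homogeneous of degree $1$, we get $f(\alpha u) = \alpha^2 \langle Au, u\rangle + \alpha \langle a, u\rangle = \alpha^2 f_H(u) + \alpha f_L(u)$, and likewise for $g$. Stacking these into the vector $F$, this yields $F(\alpha u) = \alpha^2 F_H(u) + \alpha F_L(u)$, and ranging $\alpha$ over $\R$ gives exactly the claimed description of $F(\R u)$. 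This is the routine part.

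For part $(b)$, I would show the two inclusions. The inclusion $F(\R u) + \R_+ F_H(u) \subseteq \co F(\R u)$ is the substantive direction: I need to argue that adding a nonnegative multiple of $F_H(u)$ to any point of the curve $F(\R u)$ keeps us inside the convex hull. The idea is that the curve $\alpha \mapsto \alpha^2 F_H(u) + \alpha F_L(u)$ traces a parabola (possibly degenerate) whose "opening direction" is $F_H(u)$, so the convex hull of the parabola fills in everything on the $F_H(u)$ side. Concretely, for a fixed $\alpha_0$ and $s \geq 0$, I would produce the point $F(\alpha_0 u) + s F_H(u)$ as a convex combination of two points $F(\alpha_1 u)$ and $F(\alpha_2 u)$ on the curve. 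Writing $\beta = t \alpha_1 + (1-t)\alpha_2$ type relations, a convex combination $t F(\alpha_1 u) + (1-t) F(\alpha_2 u)$ equals $(t\alpha_1^2 + (1-t)\alpha_2^2) F_H(u) + (t\alpha_1 + (1-t)\alpha_2) F_L(u)$; by the strict convexity of $\alpha \mapsto \alpha^2$ one has $t\alpha_1^2 + (1-t)\alpha_2^2 \geq (t\alpha_1 + (1-t)\alpha_2)^2$ with the excess being an arbitrary nonnegative number (as $\alpha_1, \alpha_2$ vary with fixed mean), so choosing the mean equal to $\alpha_0$ delivers $F(\alpha_0 u) + s F_H(u)$ for the desired $s \geq 0$.

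For the reverse inclusion $\co F(\R u) \subseteq F(\R u) + \R_+ F_H(u)$, the cleanest route is first to check that the right-hand side $F(\R u) + \R_+ F_H(u)$ is itself convex, and then, since it trivially contains $F(\R u)$, it must contain the convex hull $\co F(\R u)$ (the smallest convex set containing $F(\R u)$). To verify convexity of $F(\R u) + \R_+ F_H(u)$ I would take two points $\alpha_1^2 F_H(u) + \alpha_1 F_L(u) + s_1 F_H(u)$ and $\alpha_2^2 F_H(u) + \alpha_2 F_L(u) + s_2 F_H(u)$ with $s_1, s_2 \geq 0$, form a convex combination, and recombine using the same inequality $t\alpha_1^2 + (1-t)\alpha_2^2 \geq (t\alpha_1+(1-t)\alpha_2)^2$ to absorb the excess into the $\R_+ F_H(u)$ term; the $F_L(u)$ coefficient combines affinely to give a valid $\alpha$.

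The main obstacle is the degenerate case where $F_H(u)$ and $F_L(u)$ are linearly dependent (in particular when $F_H(u) = 0$ or $F_L(u) = 0$), since then the "parabola" collapses to a line or a ray and the geometric picture of filling in one side must be handled carefully. I would treat these separately: if $F_H(u) = 0$ then $F(\R u) = \R F_L(u)$ is already a line (hence convex and equal to its hull) and $\R_+ F_H(u) = \{0\}$, so both sides coincide trivially; and if $F_L(u)$ is a scalar multiple of $F_H(u)$ (or zero) the range $F(\R u)$ lies on the line $\R F_H(u)$, where the algebra above still goes through but I must confirm no point is omitted. Away from these degeneracies, $\{F_H(u), F_L(u)\}$ is LI and Proposition \ref{felo0} lets me express membership and the relevant inequalities cleanly in terms of the bracket $\langle \cdot_\bot, \cdot\rangle$, which is the tool the paper has set up for exactly this kind of two-dimensional bookkeeping.
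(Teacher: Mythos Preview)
Your approach is correct and essentially the same as the paper's. The paper compresses both inclusions into the single identity
\[
tF(\alpha u)+(1-t)F(\beta u)=F\bigl((t\alpha+(1-t)\beta)u\bigr)+(t-t^2)(\alpha-\beta)^2 F_H(u),
\]
which is exactly your inequality $t\alpha_1^2+(1-t)\alpha_2^2\geq(t\alpha_1+(1-t)\alpha_2)^2$ with the remainder written out explicitly; from it both inclusions (and the convexity of $F(\R u)+\R_+F_H(u)$) fall out immediately. Note that this identity is purely algebraic and holds regardless of whether $\{F_H(u),F_L(u)\}$ is LI, so your separate treatment of the degenerate cases, while harmless, is unnecessary.
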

\begin{proof} $(a)$ is straightforward and $(b)$ is a consequence of the following
equalities:
\begin{eqnarray}\label{eco:01}
tF(\alpha u)&+&(1-t)F(\beta u)=[t\alpha^{2}+(1-t)\beta^{2}]F_H(u)+
[t\alpha+(1-t)\beta]F_L(u)\notag \\
&=&[(t\alpha+(1-t)\beta)^{2}+(t-t^2)(\alpha-\beta)^2]F_H(u)+
[t\alpha+(1-t)\beta]F_L(u)\notag \\
&=& F((t\alpha+(1-t)\beta)u)+(t-t^2)(\alpha-\beta)^2F_H(u).
\end{eqnarray}
\end{proof}

The one-dimensional version of (inhomogeneous) Dines-type theorem is expressed in
the following

\begin{lem}\label{lema01}
Let $u\in \R^n$, $u\not=0$ and $0\not=d\in\R^2$. The following hold:
\begin{itemize}
\item[$(a)$] Assume that $\{ F_{H}(u), F_{L}(u)\}$ is LD then $F(\R u)$ is convex.
\item[$(b)$] Assume that $\{ F_{H}(u), F_{L}(u)\}$ is LI. Then
\begin{enumerate}
\item[$(b1)$] if $d=F_{H}(u)$ one has $F(\R u)+\R_{+}d=
{\rm co}~F(\R u)+\R_{+}d={\rm co}~F(\R u);$
\item[$(b2)$] if $d=-F_{H}(u)$ then
$$F(\R u)+\R_{+}d=
F(\R u)\cup{\mathcal C}({\rm co}~F(\R u))=
\overline{{\mathcal C}({\rm co}~F(\R u))}\neq {\rm co}~F(\R u)+\R_{+}d;$$
\item[$(b3)$] if $\{ d, F_{H}(u)\}$ is LI, one has
$F(\R u)+\R_{+}d={\rm co}~F(\R u)+\R_{+}d\neq {\rm co}~F(\R u).$
\end{enumerate}
\end{itemize}
Similar results hold for the set $F(x+\R u)+\R_{+}d$ for any fixed
$x\in\R^n$ since
$$
F(x+tu)=t^2F_H(u)+t\left(\begin{array}{c}\langle 2Ax+a,u\rangle\\
\langle 2Bx+b,u\rangle
\end{array}\right)+F(x).
$$
\end{lem}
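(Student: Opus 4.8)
\textbf{Proof plan for Lemma \ref{lema01}.}

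The plan is to reduce everything to the one-dimensional curve described in Proposition \ref{co:00}(a), namely $F(\R u)=\{\alpha^2 F_H(u)+\alpha F_L(u):\alpha\in\R\}$, and then to analyze the geometry of this parabola-type curve in $\R^2$ according to the linear (in)dependence of the pair $\{F_H(u),F_L(u)\}$. I would introduce the shorthand $p\doteq F_H(u)$ and $q\doteq F_L(u)$, so that $F(\R u)=\{\alpha^2 p+\alpha q:\alpha\in\R\}$.

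For part $(a)$, when $\{p,q\}$ is LD, both vectors lie on a common line through the origin, so the whole curve $F(\R u)$ is contained in that line. If $p=0$ the curve is $\{\alpha q:\alpha\in\R\}=\R q$ (or $\{0\}$), trivially convex. If $q=\lambda p$, then $F(\R u)=\{(\alpha^2+\lambda\alpha)p:\alpha\in\R\}$, and since $\alpha^2+\lambda\alpha=(\alpha+\tfrac{\lambda}{2})^2-\tfrac{\lambda^2}{4}$ ranges over the interval $[-\tfrac{\lambda^2}{4},+\infty)$ as $\alpha$ varies over $\R$, the image is a closed ray, hence convex. This is the easy case and requires only the scalar computation of the range of the quadratic $\alpha\mapsto\alpha^2+\lambda\alpha$.

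For part $(b)$, where $\{p,q\}$ is LI, I would use $\{p,q\}$ as a basis of $\R^2$ and describe every point by its $(p,q)$-coordinates: the point $\alpha^2 p+\alpha q$ has coordinates $(\alpha^2,\alpha)$, which traces out the parabola $\{(s,r):s=r^2\}$ in this oblique frame. From Proposition \ref{co:00}(b), $\co F(\R u)=F(\R u)+\R_+ F_H(u)$, so in coordinates $\co F(\R u)$ is the filled region $\{(s,r):s\geq r^2\}$, the convex side of the parabola along the $p$-direction. The three subcases then become statements about adding a ray $\R_+ d$ to this parabola. In subcase $(b1)$, $d=p$ is the $+p$-direction, so $F(\R u)+\R_+ d$ fills exactly the convex epigraph $\{s\geq r^2\}$, which equals $\co F(\R u)$; here I would invoke Proposition \ref{co:00}(b) directly. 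In subcase $(b2)$, $d=-p$ points the opposite way: adding $\R_+(-p)$ to each parabola point sweeps everything with $s\leq r^2$, so the union is $F(\R u)\cup\{s<r^2\}$, which I would identify with $F(\R u)\cup\mathcal{C}(\co F(\R u))$ and whose closure is $\overline{\mathcal{C}(\co F(\R u))}=\{s\leq r^2\}$, a nonconvex set strictly smaller than $\co F(\R u)+\R_+ d=\R^2$. In subcase $(b3)$, where $\{d,p\}$ is LI, the direction $d$ has a nonzero $q$-component, and a short argument (each vertical parabola-fiber, translated along a genuinely transverse ray, still covers the convex epigraph) shows $F(\R u)+\R_+ d=\co F(\R u)+\R_+ d$, and that this set strictly contains $\co F(\R u)$ because $d$ escapes the recession cone $\R_+ p$ of the epigraph.

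The main obstacle I anticipate is subcase $(b2)$: the three-way identity $F(\R u)+\R_+(-p)=F(\R u)\cup\mathcal{C}(\co F(\R u))=\overline{\mathcal{C}(\co F(\R u))}$ requires care, because the parabola curve itself must be added back to its strict complement, and one has to verify that the union of the open region $\{s<r^2\}$ with the boundary curve $\{s=r^2\}$ is precisely the closed region $\{s\leq r^2\}$, together with the separate verification that this differs from $\co F(\R u)+\R_+(-p)=\R^2$. The final displayed shift formula $F(x+tu)=t^2 F_H(u)+t\bigl(\langle 2Ax+a,u\rangle,\langle 2Bx+b,u\rangle\bigr)^\top+F(x)$ is a routine expansion of $f$ and $g$ at $x+tu$, showing that $t\mapsto F(x+tu)$ has the same quadratic-in-$t$ leading term $F_H(u)$ as the case $x=0$; this lets every statement transfer verbatim to $F(x+\R u)+\R_+ d$ by translating the origin of the parameter and of $\R^2$, so no new geometric work is needed there.
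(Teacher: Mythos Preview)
Your plan is correct and follows essentially the same route as the paper: both reduce to the parabola $\{\alpha^2 p+\alpha q:\alpha\in\R\}$ in the basis $\{p,q\}=\{F_H(u),F_L(u)\}$, invoke Proposition~\ref{co:00}(b) to identify $\co F(\R u)$ with the region $\{s\geq r^2\}$, and then treat the three subcases by the direction of $d$ relative to $p$. The only place your sketch is thinner than the paper is $(b3)$, where the claimed ``short argument'' is exactly the paper's step of solving a quadratic in the curve parameter (and checking the resulting ray coefficient is nonnegative); you should expect to carry that computation out rather than rely on the fiber picture.
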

\begin{proof} We write $F(tu)=t^2F_H(u)+tF_L(u)$.\\
$(a)$: In this case the set $F(\R u)$ is either a point or ray or a line, so
convex.\\
$(b1)$: From Proposition \ref{co:00}, we obtain
\begin{equation}\label{eco:200}
{\rm co}(F(\R u)+\R_{+}d)={\rm co}~F(\R u)+\R_{+}d=
F(\R u)+\R_{+}F_H(u)+\R_+d,
\end{equation}
from which the convexity of $F(\R u)+\R_{+}d$ follows if $d=F_H(u)$.\\
$(b2)$: We obtain the following equalities, thanks to the LI of
$\{F_{H}(u), F_{L}(u)\}$:
\begin{eqnarray}
{\rm co}~F(\R u)&=&\Big\{ \sum_{i=1}^3\lambda_{i}F(\alpha_{i}u):~
\sum_{i=1}^3\lambda_{i}=1,~\lambda_{i}\geq0,~\alpha_i\in\R\Big\}\notag \\
&=&\Big\{\sum_{i=1}^3\lambda_{i}\alpha_{i}^{2}F_{H}(u)+
\sum_{i=1}^3\lambda_{i}\alpha_{i}F_{L}(u):~\lambda_{i}\geq0,~
\sum_{i=1}^3\lambda_{i}=1,~\alpha_i\in\R\Big\} \notag \\
&=&\{ \alpha F_{H}(u)+\beta F_{L}(u):~\alpha\geq \beta^{2},~\alpha,~\beta\in\R\}.
\label{co:100}
\end{eqnarray}
Thus
$${\mathcal C}({\rm co}~F(\R u))=\left\{ \alpha F_{H}(u)+
\beta F_{L}(u):~\alpha<\beta^{2},~\alpha,~\beta\in\R\right\},~~{\rm and~so}$$
\begin{eqnarray*}
\overline{{\mathcal C}({\rm co}~F(\R u))}&=&\{ \alpha F_{H}(u)+
\beta F_{L}(u):~\alpha\leq\beta^{2},~\alpha,~\beta\in\R\}=
F(\R u)+\R_+ d,
\end{eqnarray*}
since  $F(\R u)+\R_+ d=\{(\beta^2-t)F_H(u)+\beta F_L(u):~\beta\in\R,~t\geq 0\}$ by
Proposition \ref{co:00}.\\
$(b3)$: We write $F_L(u)=\lambda_1 F_H(u)+\lambda_2 d$ with
$\lambda_2\not=0$. By virtue of \eqref{eco:200}, we need to check that
$F(\R u)+\R_{+}F_H(u)+\R_+d\subseteq F(\R u)+\R_+d$. This requires to solve a
quadratic equation, which is always possible. Indeed, take $\alpha\in\R$,
$\lambda_+\geq 0$, $\gamma_+\geq 0$, we must find $\beta\in\R$ and $r_+>0$
such that
\begin{equation}\label{eco:201}
\alpha\lambda_2+\lambda_+=\beta\lambda_2+r_+,~~\alpha^2+\alpha\lambda_1+\gamma_+=
\beta^2+\beta\lambda_1.
\end{equation}
We can solve this system by substituting $\beta$ from the first equation of
\eqref{eco:201} into the second one, proving the convexity of
$F(\R u)+\R_+ d$.\\
Let us check the last assertion. By assumption, we can write
$d=\sigma_{1}F_{H}(u)+\sigma_{2}F_{L}(u)$ with $\sigma_{2}\neq0$.
From \eqref{co:100}, $x\in {\rm co}~F(\R u)$ if and only if
$x=\alpha^{2}F_{H}(u)+\beta F_{L}(u)$ with $\alpha^{2}\geq\beta^{2}$.
By taking $\gamma>0$ sufficiently large such that
$y\doteq F(tu)+\gamma d=[t^{2}+\sigma_{1}\gamma]u_{H}+
\left[t+\sigma_{2}\gamma\right]u_{L}$ with
$t^{2}+\sigma_{1}\gamma<\left(t+\sigma_{2}\gamma\right)^{2}$,
we get $y\in F(\R u)+\R_{+}d$ and $y\not\in {\rm co}~F(\R u)$.
\end{proof}

Next example shows that in fact $F(\R u)$ may be nonconvex for some $u$,
but it becomes convex once a particular direction is added.
\begin{ex} \label{ej:op0} Take
$$A=\begin{pmatrix}
0 & 1\cr
1 & 0
\end{pmatrix},~~
b=\begin{pmatrix}
1\cr
0
\end{pmatrix},
$$
with all other data vanish. Let $u=(u_1,u_2)$, $u_1u_2\not=0$. Then,
$F_H(u)=(2u_1u_2,0)$, $F_L(u)=(0,u_1)$ and
$F(\R u)=\{(x,y)\in\R^2:~x=\dfrac{2u_2}{u_1}y^2\}~~{\it is~ nonconvex},$
but certainly, $F(\R u)+\R_+d$ is convex if, and only if $d\not=(-2u_1u_2,0)$.
Here,
$$F(\R^2)=\{(0,0\}\cup [\R^2\setminus(\R\times\{0\})],~~F_H(\R^2)=
\R\times\{0\}.$$
\end{ex}

We note that, due to convexity,
\begin{equation}\label{eq:f:h}
\overline{F_H(\R^n)}=\R^2\Longleftrightarrow F_H(\R^n)=\R^2
\Longleftrightarrow {\rm int}~F_H(\R^n)=\R^2.
\end{equation}

As a consequence of the previous lemma we get the characterization of convexity.

\begin{thm}\label{teo:one}
Let $u\in \R^n$, $u\not=0$, and $f, g$ as above. Then,
\begin{itemize}
\item[$(a)$] $F(\R u)$ is convex $\Longleftrightarrow$ $\{F_H(u), F_L(u)\}$ is LD;
\item[$(b)$] in case $\{ F_{H}(u), F_{L}(u)\}$ is LI and $d\not=0$, one has
$$F(\R u)+\R_{+}d~~{\rm is~convex}~\Longleftrightarrow~-d\not\in\R_+F_H(u).$$
\end{itemize}
\end{thm}


\subsection{The case of higher dimension}

We first recall the following result due to Polyak:
\begin{thm} $\cite[Theorem~2.2]{pol1998}$ If $n\geq 2$ and there exist
$\alpha, \beta\in\R$ such that $\alpha A+\beta B\succ 0$, then
$F(\R^n)$ is convex (also closed).
\label{polayk00}
\end{thm}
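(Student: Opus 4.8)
The plan is to reduce to the case $A\succ 0$ and then exploit that $f$ becomes strictly convex and coercive. First I would perform an invertible linear change of coordinates in the target plane $\R^2$: choosing $M\in\R^{2\times 2}$ invertible whose first row is $(\alpha,\beta)$ (with any second row making $M$ invertible), the pair $(f,g)$ is replaced by $M(f,g)^\top=:(\tilde f,\tilde g)$, and since invertible linear maps preserve both convexity and closedness, $F(\R^n)$ is convex (closed) if and only if $(\tilde f,\tilde g)(\R^n)$ is. By construction the Hessian of $\tilde f$ is $2(\alpha A+\beta B)\succ 0$, so I may assume from the outset $A\succ 0$. Completing the square, $f(x)=\|A^{1/2}x+\tfrac12 A^{-1/2}a\|^2-\tfrac14\langle A^{-1}a,a\rangle$; the affine bijection $y=A^{1/2}x+\tfrac12 A^{-1/2}a$ does not change the range as a set, and the additive constant only translates it, so I may further assume $f(x)=\|x\|^2$.

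Closedness is then immediate: if $F(x_k)\to(s,r)$ then $\|x_k\|^2=f(x_k)$ is bounded, so a subsequence of $(x_k)$ converges to some $\bar x$ with $F(\bar x)=(s,r)$. For convexity I would use a level-set description. For $s\ge 0$ the level set $\{x:\|x\|^2=s\}$ is a Euclidean sphere, which is \emph{connected precisely because} $n\ge 2$ (for $n=1$ it is two points, and the conclusion genuinely fails there); hence the continuous function $g$ maps it onto a compact interval $[\varphi(s),\psi(s)]$, where $\varphi(s)=\min_{\|x\|^2=s}g(x)$ and $\psi(s)=\max_{\|x\|^2=s}g(x)$ are attained and continuous. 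This yields the representation
\[
F(\R^n)=\{(s,r)\in\R^2:~s\ge 0,~\varphi(s)\le r\le\psi(s)\},
\]
so that $F(\R^n)$ is convex if and only if $\varphi$ is convex and $\psi$ is concave on $[0,\infty)$.

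The main obstacle is exactly the convexity of $\varphi$ (concavity of $\psi$ being the same statement applied to $-g$). I would establish it through Lagrangian duality for the single-equality-constrained problem $\min\{g(x):\|x\|^2=s\}$. Taking the Legendre conjugate one computes $\varphi^*(\xi)=\sup_x[\xi\|x\|^2-g(x)]$, which is finite exactly for those $\xi$ making $g-\xi\|\cdot\|^2$ coercive, and then $\varphi=\varphi^{**}$ is \emph{equivalent} to the strong duality identity $\min_{\|x\|^2=s}g(x)=\sup_\xi\inf_x\,[g(x)+\xi(s-\|x\|^2)]$. This is precisely where the hypothesis is used: the availability of a pair with $\alpha A+\beta B\succ 0$ (equivalently, of a multiplier turning the Lagrangian into a coercive strictly convex quadratic) is the classical sufficient condition guaranteeing a zero duality gap for one quadratic minimized under one quadratic constraint. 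Equivalently, and avoiding duality language, I would run a separation argument: for every direction $c$ in the open cone $\{c:c_1A+c_2B\succ 0\}$ the support value $p(c)=\inf_x\langle c,F(x)\rangle$ is finite and attained at the unique minimizer of a coercive strictly convex quadratic, and showing that these supporting lines cut off every point outside $F(\R^n)$ exhibits $F(\R^n)$ as the intersection of all the closed half-planes containing it, hence convex. The delicate point in either route is the same: verifying that the exposed points produced by the positive-definite pencil actually sweep the entire lower (resp.\ upper) boundary curve, leaving no dents.
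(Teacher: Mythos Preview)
Your reduction steps are correct and clean: the linear change in the target $\R^2$, the normalization to $f(x)=\|x\|^2$, the closedness via coercivity, and the level-set representation $F(\R^n)=\{(s,r):s\ge 0,\ \varphi(s)\le r\le\psi(s)\}$ using connectedness of spheres for $n\ge 2$ are all fine, and you correctly identify that everything reduces to proving $\varphi$ convex (and $\psi$ concave).

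The gap is at exactly that point, and it is a real one. You observe that $\varphi=\varphi^{**}$ is \emph{equivalent} to strong duality for $\min\{g(x):\|x\|^2=s\}$, and then you invoke ``the classical sufficient condition guaranteeing a zero duality gap for one quadratic minimized under one quadratic constraint'' when a positive-definite combination of the two Hessians exists. But that ``classical'' statement is precisely (a reformulation of) Polyak's theorem --- indeed the equivalence you just wrote down shows the two are interderivable --- so as written the argument is circular. Your alternative separation route has the same missing piece, as you yourself note (``the delicate point \ldots\ leaving no dents''). To make your approach non-circular you would need an \emph{independent} proof of strong duality for the specific subproblem where the constraint Hessian is $I$: e.g.\ Mor\'e's direct trust-region argument, or homogenization together with the \emph{homogeneous} Dines theorem to obtain an S-lemma. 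That is genuine extra work you have not supplied.

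By contrast, the paper does not prove this theorem directly (it is cited from Polyak) but derives it as a special case of the subsequent result under ND, using Dines' equivalence $\alpha A+\beta B\succ 0\Leftrightarrow [\text{ND and }F_H(\R^n)\ne\R^2]$ together with Lemma~\ref{todoo} for the case $F_H(\R^n)=\R^2$. The proof under ND is completely different from yours: it restricts to the two-dimensional subspace $\R x+\R y$, uses that ND in $\R^2$ forces simultaneous diagonalizability, decomposes $F(\R^2)=F(\R u)+F(\R v)$ with $z_{u,v}=0$, and then finishes with an explicit polynomial root-finding computation to establish $\co F(\R u)+F(\R v)\subseteq F(\R u)+F(\R v)$. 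This route is self-contained and never invokes any QP duality result.
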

Next theorem is an extension of the previous result. Indeed, Corollary 1 in  
\cite[page 498]{dines1941} establishes
$$\alpha A+\beta B\succ 0\Longleftrightarrow {\rm ND~holds~and}~F_H(\R^n)\not=\R^2.$$
Observe also that in case $F_H(\R^n)=\R^2$, one obtains $F(\R^n)=\R^2$ by 
Lemma  \ref{todoo}.

\begin{thm} \label{lema:connd}
Assume that ND holds for $A$ and $B$. Then
\begin{itemize}
\item[$(a)$] 
either $F_{H}(\mathbb{R}^{2})=\R^{2}$
or $F(\R^2)$ is convex$;$
\item[$(b)$] if $n\geq3$ then $F(\R^{n})$ is convex.
\end{itemize}
\end{thm}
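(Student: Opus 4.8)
The plan is to leverage the non-degeneracy assumption ND together with the structural characterizations from Section \ref{secc3}, splitting the argument according to whether $F_H(\R^n) = \R^2$ or not, and treating the dimensions $n=2$ and $n\geq 3$ as indicated by the two parts.

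First I would dispose of the case $F_H(\R^n)=\R^2$. In dimension two, part $(a)$ explicitly allows $F_H(\R^2)=\R^2$ as one of its two alternatives, so nothing is needed there. For part $(b)$ with $n\geq 3$, the remark preceding the theorem points to a result (Lemma \ref{todoo}) asserting that $F_H(\R^n)=\R^2$ forces $F(\R^n)=\R^2$, which is trivially convex; so I would invoke that to clear this subcase. Thus the substance of the proof lies entirely in the case $F_H(\R^n)\neq\R^2$, where I must prove that $F(\R^n)$ is convex.

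So assume ND holds and $F_H(\R^n)\neq\R^2$. By Corollary \ref{cor0:eq} (equivalence of $(a)$ and $(c)$, valid for all $n\geq 2$), there exist $\alpha,\beta\in\R$ with $\alpha A+\beta B\succ 0$. At this point the cleanest route is simply to apply Polyak's Theorem \ref{polayk00}, which states precisely that the existence of such $\alpha,\beta$ yields convexity (and closedness) of $F(\R^n)$ for every $n\geq 2$. This simultaneously settles the remaining subcase of $(a)$ (namely $F_H(\R^2)\neq\R^2$, giving convexity of $F(\R^2)$) and the remaining subcase of $(b)$ (giving convexity of $F(\R^n)$ for $n\geq 3$). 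I would then combine the two subcases: in $(a)$, either $F_H(\R^2)=\R^2$ or, by the definiteness of the pencil and Polyak, $F(\R^2)$ is convex; in $(b)$, either $F(\R^n)=\R^2$ by Lemma \ref{todoo} or $F(\R^n)$ is convex by Polyak.

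The main obstacle is essentially expository rather than mathematical, since the heavy lifting is outsourced to Polyak's theorem and the Dines-type equivalence in Corollary \ref{cor0:eq}. The one genuine point that needs care is the logical bookkeeping of the dichotomy $F_H(\R^n)=\R^2$ versus $F_H(\R^n)\neq\R^2$: I must be sure Corollary \ref{cor0:eq}$(c)$ is correctly triggered only in the latter case (its hypothesis $(a)$ reads ``$F_H(\R^2)\neq\R^2$ \emph{and} ND holds''), and I must confirm that the cited corollary's equivalence with the positive-definite-pencil condition indeed extends to arbitrary $n\geq 2$ as the text claims. The possible subtlety is whether for $n\geq 3$ the implication ``ND and $F_H(\R^n)\neq\R^2$ $\Rightarrow$ $\exists\,\alpha,\beta:\alpha A+\beta B\succ 0$'' still holds, since Corollary \ref{cor0:eq} is stated for two-dimensional ambient space; the authors' remark attributes this to Corollary 1 of \cite{dines1941} valid for all $n\geq 2$, so I would rely on that and note that the ND condition \eqref{nd:00} is dimension-free. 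Once the pencil's definiteness is secured in each relevant case, invoking Theorem \ref{polayk00} closes the argument immediately.
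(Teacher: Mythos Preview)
Your proof is correct, but it takes a genuinely different route from the paper's. You reduce the theorem to Polyak's Theorem \ref{polayk00} via the Dines equivalence (Corollary~1 of \cite{dines1941}: ND together with $F_H(\R^n)\neq\R^2$ is equivalent to the existence of a positive-definite pencil, valid for all $n\geq 2$). The paper instead gives a self-contained argument that does \emph{not} invoke Polyak: for $(a)$ it uses Theorem \ref{thm:0} to obtain SD in dimension two, writes $F(\R^2)=F(\R u)+F(\R v)$ with $z_{u,v}=0$, and checks convexity directly by showing ${\rm co}\,F(\R u)+F(\R v)\subseteq F(\R u)+F(\R v)$ via an explicit polynomial root argument; for $(b)$ it restricts to two-dimensional slices $F(\R x+\R y)$, verifies that ND descends to the associated $2\times 2$ matrices $\widetilde A(x,y),\widetilde B(x,y)$, and applies $(a)$. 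Your route is shorter; the paper's has the virtue of being independent of Polyak's result (which is what justifies calling the theorem an ``extension'' rather than a mere reformulation), and its slice-reduction machinery is reused later, notably in Lemma \ref{open:2}.

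One wrinkle worth fixing: your appeal to Lemma \ref{todoo} for the subcase $F_H(\R^n)=\R^2$ when $n\geq 3$ would be circular, since Lemma \ref{todoo} appears after Theorem \ref{lema:connd} and its proof (through Lemma \ref{open:2}) actually uses Theorem \ref{lema:connd}. Fortunately that subcase is vacuous: for $n\geq 3$, $F_H(\R^n)=\R^2$ forces the existence of $u\neq 0$ with $F_H(u)=0$ (see the remark after Proposition \ref{sev:impl}, citing \cite[Corollary, p.~401]{heste1968}), contradicting ND. Replace the forward reference to Lemma \ref{todoo} by this observation and your argument is clean.
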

\begin{proof} $(a)$: Assume that $F_{H}(\R^{2})\neq\R^{2}$. From
Proposition \ref{sev:impl} and Theorem \ref{thm:0}, we get SD for $A$ and $B$,
which means that there exist $\{ u, v\}$ LI satisfying $z_{u, v}=0$.
Thus $F(\R^{2})=F(\R u)+F(\R v)$. In addition, $F_H(u)\neq0\neq F_H(v)$ and
by the choice of $u$ and $v$, $F_H(u)\not=-\rho F_H(v)$ for all $\rho>0$. We claim
that
\begin{equation}\label{ecua100}
{\rm co}~F(\R u)+F(\R v)\subseteq F(\R u)+F(\R v).
\end{equation}
By virtue  of Lemma \ref{lema01}, we need only to consider
$\{F_{H}(u), F_L(u)\}$ to be LI. We can write for some
$\mu_i$ and $\sigma_i$, $i=1, 2$, $F_{H}(v)=\mu_{1}F_{H}(u)+\mu_{2}F_{L}(u)$ and
$F_{L}(v)=\sigma_{1}F_{H}(u)+\sigma_{2}F_{L}(u)$. Take any
$x\in {\rm co}~F(\R u)+F(\R v)$; then, by Lemma \ref{co:00},
$x=F(\alpha u)+\gamma^{2}F_{H}(u)+F(\beta v)$ for some $\alpha, \beta\in\R$
and $\gamma\in\R$.\\
We search for $\lambda_i\in\R$, $i=1, 2$ satisfying
$x=F(\lambda_{1}u)+F(\lambda_{2}v)$. From the last two equalities, we get
\[
\lambda_{1}^{2}+\lambda_{2}^{2}\mu_{1}+\lambda_{2}\sigma_{1}-
\alpha^{2}-\gamma^{2}-\beta^{2}\mu_{1}-\beta\sigma_{1}=0
\]
\[
\lambda_{1}+\lambda_{2}^{2}\mu_{2}+\lambda_{2}\sigma_{2}-
\alpha-\beta^{2}\mu_{2}-\beta\sigma_{2}=0
\]
From the second equation, we obtain
$\lambda_{1}=\alpha+\beta^{2}\mu_{2}+\beta\sigma_{2}-
\lambda_{2}^{2}\mu_{2}-\lambda_{2}\sigma_{2}$, which is substituted on the
left-hand side of the first equation to get a polynomial in $\lambda_2$, say
$p(\lambda_2)$. Our goal is to find a zero of $p$.
Observe that $\lambda_2=\beta$ implies $\lambda_1=\alpha$ and so
$p(\beta)=-\gamma^2\leq 0$. If $\mu_2\not=0$, the higher degree term of $p$ is
$\mu_2^2\lambda ^4$ which goes to $+\infty$ as $\lambda_2\to+\infty$; if
$\mu_2=0$, the higher degree term of $p$ is
$(\sigma_{2}^{2}+\mu_{1})\lambda_{2}^{2}$, with $\mu_1$ being positive
by the choice of $u$ and $v$. Thus, in both cases, $p(\lambda_2)>0$ for
$\lambda_2$ sufficiently large. Hence, there exists $p(\lambda_2)=0$, and so
\eqref{ecua100} is proved. We now check that ${\rm co}~F(\R^2)=F(\R^2)$.
Indeed, it is obtained from the following chain of equalities:
\begin{eqnarray*}
{\rm co}~F(\R^{2})&=&{\rm co}~F(\R u)+{\rm co}~F(\R v)=
{\rm co}~F(\R u)+F(\R v)+\R_{++}F_H(v)~~~~~~~~~\\
&=& F(\R u)+F(\R v)+\mathbb{R}_{++}F_{H}(v)=
F(\R u)+{\rm co}~F(\R v)\subseteq F(\R u)+F(\R v)\\
&=&F(\R^{2}).
\end{eqnarray*}
$(b)$: We will see now how we can reduce to the case $n=2$, so that $(a)$ is
applicable.\\
Let $x, y\in\R^n$ and $t\in~]0,1[$, we have
\begin{equation}\label{dim2:0}
tF(x)+(1-t)F(y)\in tF(\R x+\R y)+(1-t)(F(\R x+\R y).
\end{equation}
Thus, it suffices to prove the convexity of $F(\R x+\R y)$ whenever $\{x, y\}$
is LI. Take any $\lambda_i\in \R$, $i=1, 2$, then
$$f(\lambda_1 x+\lambda_2y)=\lambda_1^2\langle Ax,x\rangle+
2\lambda_1\lambda_2 \langle Ax,y\rangle+
\lambda_2^2 \langle Ay,y\rangle+\lambda_1 \langle a,x\rangle+
\lambda_2 \langle a,y\rangle.$$
$$=\begin{pmatrix}\lambda_1 & \lambda_2
\end{pmatrix}
\begin{pmatrix} \langle Ax,x\rangle & \langle Ax,y\rangle\cr
\langle Ax,y\rangle & \langle Ay,y\rangle
\end{pmatrix}
\begin{pmatrix}
\lambda_1\cr\lambda_2
\end{pmatrix}
+
\begin{pmatrix}
\langle a,x\rangle & \langle a,y \rangle
\end{pmatrix}
\begin{pmatrix}\lambda_1 \cr \lambda_2\end{pmatrix}.
$$
A similar expression is obtained for $g$. By denoting
$$\widetilde A(x,y)\doteq\begin{pmatrix}
\langle Ax,x\rangle & \langle Ax,y\rangle\cr
\langle Ax,y\rangle & \langle Ay,y\rangle
\end{pmatrix},~~
\widetilde B(x,y)\doteq\begin{pmatrix}
\langle Bx,x\rangle & \langle Bx,y\rangle\cr
\langle Bx,y\rangle & \langle By,y\rangle
\end{pmatrix},~~
$$
$$\widetilde a(x,y)\doteq\begin{pmatrix}
\langle a,x\rangle\cr
\langle a,y\rangle
\end{pmatrix},~~
\widetilde b(x,y)\doteq\begin{pmatrix}
\langle b,x\rangle\cr
\langle b,y\rangle
\end{pmatrix},
$$
we can write
\begin{equation}\label{n:dim_two}
F(\R x+\R y)
=\left\{\widetilde F(\lambda)\doteq
\begin{pmatrix} \langle \tilde A(x,y)\lambda,\lambda\rangle\cr
\langle \widetilde B(x,y)\lambda,\lambda\rangle\end{pmatrix}+
\begin{pmatrix} \langle\widetilde a(x,y),\lambda\rangle\cr
\langle\widetilde b(x,y),\lambda\rangle
\end{pmatrix}:~~\lambda\in\R^2\right\}=\widetilde F(\R^2).
\end{equation}
We want to apply the result in $(a)$ to the set on the right-hand side of
\eqref{n:dim_two}. It is not difficult to verify that if ND holds for
$A$ and $B$, then ND also holds for $\widetilde A(x,y)$ and $\widetilde B(x,y)$
provided $\{x, y\}$ is LI. Furthermore, since $F_H(\R^n)\not=\R^2$, we get
$\widetilde F_H(\R^2)\not=\R^2$. By applying $(a)$, we conclude
that $\widetilde F(\R^2)=F(\R x+\R y)$ is convex, and therefore the convexity
of $F(\R^n)$.
\end{proof}

In order to establish our second main result without ND, some preliminaries are
needed.

\begin{prop}\label{non:nd}
Let $n\geq 2$ and $0\not=v\in\R^n$ such that $F_H(v)=0$. The following assertions
hold:
\begin{itemize}
\item[$(a)$] $F_H(\R^2)\not=\R^2$ and $\{Av, Bv\}$ is LD$;$
\item[$(b)$] if $n\geq 3$ then either $F_H(\R^n)=\R^2$ or
$\{Av, Bv\}$ is LD.
\item[$(c)$] The set $Z\doteq\{z_{u,v}:~u\in\R^n\}$ is a vector subspace, and if
$F_H(\R^n)\not=\R^2$ then for all $u\in\R^n$ satisfying $z_{u,v}\not=0$,
\begin{equation}\label{bd:00}
{\rm bd}~F_H(\R^n)=\R z_{u,v}.
\end{equation}
In particular, if $Av\not=0$ and $Bv=\lambda Av$ $($resp. $Bv\not=0$ and
$Av=\lambda Bv)$ for some $\lambda\in\R$, then
\begin{equation}\label{bd:0}
{\rm bd}~F_H(\R^n)=\R(1,\lambda)~~~(resp.~~
{\rm bd}~F_H(\R^n)=\R(\lambda,1)).
\end{equation}
\end{itemize}
\end{prop}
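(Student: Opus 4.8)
The plan is to treat the hypothesis $F_H(v)=0$, $v\neq 0$, as the failure of ND and to extract all three assertions from the convex-cone structure of $F_H(\R^n)$ guaranteed by Theorem \ref{din1941}.

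For $(a)$ and $(b)$ I would first dispose of the case $F_H(\R^n)\neq\R^2$ uniformly. Since $F_H(\R^n)$ is a convex cone that is not all of $\R^2$, its closure is a closed convex cone $\neq\R^2$ (by \eqref{eq:f:h}), hence contained in a closed half-plane; separation yields $0\neq p=(p_1,p_2)$ with $\langle p,F_H(u)\rangle\geq 0$ for all $u$, i.e. $M\doteq p_1A+p_2B\succcurlyeq 0$. Then $\langle Mv,v\rangle=p_1\langle Av,v\rangle+p_2\langle Bv,v\rangle=0$, and positive semidefiniteness forces $Mv=0$, that is $p_1Av+p_2Bv=0$ with $p\neq 0$, so $\{Av,Bv\}$ is LD. For $(a)$ it remains to note that $F_H(\R^2)\neq\R^2$ always holds: as $v\neq 0$ with $F_H(v)=0$, ND fails, so the contrapositive of $(a)\Rightarrow(b)$ in Proposition \ref{sev:impl} gives $F_H(\R^2)\neq\R^2$ (alternatively, in $\R^2$ the equalities $\langle Av,v\rangle=\langle Bv,v\rangle=0$ put both $Av$ and $Bv$ in the one-dimensional space $v^\perp=\R v_\perp$, giving LD directly). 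For $(b)$ the stated dichotomy is precisely the above together with the remaining possibility $F_H(\R^n)=\R^2$.

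For $(c)$, the subspace claim is immediate: by symmetry $z_{u,v}=(\langle u,Av\rangle,\langle u,Bv\rangle)$ is linear in $u$, so $Z$ is the image of a linear map, hence a subspace. For the boundary I would exploit the expansion
$$
F_H(v+\veps u)=F_H(v)+2\veps z_{u,v}+\veps^2F_H(u)=2\veps z_{u,v}+\veps^2F_H(u).
$$
Dividing by $2|\veps|$ and using that $F_H(\R^n)$ is a cone, the vectors $z_{u,v}+\tfrac{\veps}{2}F_H(u)$ (for $\veps>0$) and $-z_{u,v}+\tfrac{\veps}{2}F_H(u)$ (for $\veps<0$) lie in $F_H(\R^n)$; letting $\veps\to 0$ gives $z_{u,v},-z_{u,v}\in\overline{F_H(\R^n)}$. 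Since the latter is a closed convex cone, the whole line $\R z_{u,v}$ is contained in $\overline{F_H(\R^n)}$.

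Finally I would identify the boundary. When $z_{u,v}\neq 0$ the set $\R z_{u,v}$ is a genuine line; as $F_H(\R^n)\neq\R^2$ gives $\overline{F_H(\R^n)}\neq\R^2$ (again \eqref{eq:f:h}), a closed convex cone $\neq\R^2$ containing a line cannot be a point, a ray, or a pointed cone, so it is exactly that line or a half-plane bounded by it; in either case ${\rm bd}~F_H(\R^n)={\rm bd}~\overline{F_H(\R^n)}=\R z_{u,v}$, proving \eqref{bd:00}. For the ``in particular'' statement, if $Av\neq 0$ and $Bv=\lambda Av$ then $z_{u,v}=\langle u,Av\rangle(1,\lambda)$, so any $u$ with $\langle u,Av\rangle\neq 0$ gives $z_{u,v}\in\R(1,\lambda)\setminus\{0\}$, and \eqref{bd:00} yields \eqref{bd:0}; the parenthetical case is symmetric, replacing $(1,\lambda)$ by $(\lambda,1)$. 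The main obstacle is exactly this boundary identification in $(c)$: one must argue that containing a full line excludes every pointed possibility, and that the (possibly non-closed, cf. Example \ref{ej:reff}) cone shares the boundary of its closure.
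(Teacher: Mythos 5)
Your argument is correct, and for parts $(a)$ and $(c)$ it essentially coincides with the paper's: the paper also gets $F_H(\R^2)\neq\R^2$ from the contrapositive of Proposition \ref{sev:impl}, gets LD in $(a)$ from $\{Av,Bv\}\subseteq v^\perp$ with $\dim v^\perp=1$, and in $(c)$ derives $\pm z_{u,v}\in\overline{F_H(\R^n)}$ from the expansion $F_H(u\pm tv)=F_H(u)\pm 2tz_{u,v}$ (letting $t\to\infty$ rather than your $F_H(v+\veps u)$ with $\veps\to0$ --- same device, and your sign in the $\veps<0$ branch should read $-z_{u,v}-\tfrac{\veps}{2}F_H(u)$, which is immaterial for the limit) before concluding that a convex cone $\neq\R^2$ containing a line is that line or a half-plane it bounds. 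Where you genuinely diverge is part $(b)$. The paper fixes LI vectors $x,y$ and splits on whether $\{z_{v,x},z_{v,y}\}$ is LD or LI: in the LD case it traps $Av,Bv$ in subspaces of decreasing dimension until they are forced to be LD, and in the LI case it builds explicit points $\tfrac{1}{\veps^2}(F_H(x)+F_H(y))+w\in F_H(\R^n)$ to show $\overline{F_H(\R^n)}=\R^2$. You instead argue by separation: if $F_H(\R^n)\neq\R^2$ then (via \eqref{eq:f:h}) its closure is a closed convex cone with nonzero polar, so some $p\neq0$ gives $p_1A+p_2B\succcurlyeq0$, and since $\langle (p_1A+p_2B)v,v\rangle=0$ positive semidefiniteness forces $p_1Av+p_2Bv=0$, i.e.\ LD. This is shorter, works uniformly for all $n\geq2$ (subsuming the LD claim of $(a)$), and is close in spirit to Dines's own Corollary~1 relating $F_H(\R^n)\neq\R^2$ to a semidefinite pencil; what it costs is reliance on the polar-cone/separation step and the standard kernel fact for PSD matrices, whereas the paper's route is self-contained and constructive but longer. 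Both are valid proofs.
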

\begin{proof} $(a)$: The first part follows from Proposition \ref{sev:impl}.
By assumption $\{Av, Bv\}\subseteq v^\perp$, thus
$\{Av, Bv\}$ is LD.\\
$(b)$: Again $\{Av, Bv\}\subseteq v^\perp$. Let $x, y\in\R^n$ be LI vectors.
We consider first the case where $\{z_{v,x}, z_{v,y}\}$  is LD. In this case
there exist $\lambda_1, \lambda_2\in\R$ not both null such that
$\lambda_1 z_{v,x}+\lambda_2 z_{v,y}=0$. The latter means
$z_{v,\lambda_1 x+\lambda_2 y}=0$, which implies that
$$\{Av, Bv\}\subseteq [{\rm span}\{v, \lambda_1 x+\lambda_2 y\}]^\perp.$$
The latter subspace has dimension $n-2$. If $n-2$ equals 1, we are done; if
$n-2\geq 2$, we proceed in the same manner until reaching dimension 1, in
which case we conclude that $\{Av, Bv\}$ is LD.\\
Now consider the case where $\{z_{v,x}, z_{v,y}\}$  is LI. Take any $w\in\R^2$
and write $w=\alpha z_{v,x}+\beta z_{v,y}$ for some $\alpha, \beta\in\R$. We
easily obtain for all $\varepsilon>0$:
$$F_H(\dfrac{1}{\varepsilon}x+\alpha\dfrac{\varepsilon}{2}v)=
\dfrac{1}{\varepsilon^2}F_H(x)+\alpha z_{v,x},~
F_H(\dfrac{1}{\varepsilon}y+\beta\dfrac{\varepsilon}{2}v)=
\dfrac{1}{\varepsilon^2}F_H(y)+\beta z_{v,y}.$$
By Dines theorem $F_H(\R^n)$ is a convex cone, therefore
$$\dfrac{1}{\varepsilon^2}(F_H(x)+F_H(y))+w\in F_H(\R^n),~~\forall~
\varepsilon>0.$$
Letting $\varepsilon\to+\infty$, we get $w\in\overline{F_H(\R^n)}$, proving that
$\overline{F_H(\R^n)}=\R^2$, and the result follows from \eqref{eq:f:h}.\\
$(c)$:   Obviously $Z$ is a vector subspace. Let $u\in\R^n$, $z_{u,v}\not=0$.
$F_H(u\pm tv)=F_H(u)\pm2tz_{u,v}$ for all $t\in\R$, which implies that
$\pm z_{u,v}\in \overline{F_H(\R^n)}$. Since the latter set is a convex cone
different from $\R^2$, $\overline{F_H(\R^n)}$ is either a halfspace or the
straightline $\R z_{u,v}$. In either case we obtain \eqref{bd:00}.\\
For the last part simply observe that $z_{Av, v}=\|Av\|^2(1,\lambda)\not=(0,0)$.
\end{proof}

When ND does not hold, next result asserts the convexity of $F(\R^2)$
under nonemptiness of the interior of the homogeneous part.
\begin{lem} \label{open:1} Assume that ND does not hold. If
${\rm int}~F_H(\R^2)\not=\emptyset$, then $F(\R^2)$ is convex.
\end{lem}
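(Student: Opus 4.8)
The plan is to reduce the geometry of $F_H(\R^2)$ to that of a half-plane, and then run one explicit two-parameter expansion of $F$ that feeds directly into Lemma \ref{denso}. First I would fix $0\neq v$ with $F_H(v)=0$ (available since ND fails). By Proposition \ref{non:nd}$(a)$, $F_H(\R^2)\neq\R^2$ and $\{Av,Bv\}$ is LD. Since ${\rm int}~F_H(\R^2)\neq\emptyset$, the vectors $Av,Bv$ cannot both vanish (otherwise $F_H(x)=\beta^2F_H(w)$ along a completing direction $w$, so $F_H(\R^2)$ would be a ray, contradicting the nonempty interior); hence, after possibly swapping $f$ and $g$, I may assume $Av\neq0$ and $Bv=\lambda Av$. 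Now $F_H(\R^2)$ is a convex cone (Dines' Theorem \ref{din1941}) with nonempty interior and, being $\neq\R^2$, its boundary is by Proposition \ref{non:nd}$(c)$ the line $\R z_{u,v}$; therefore $\overline{F_H(\R^2)}$ is a closed half-plane $H$ with ${\rm int}~H\subseteq F_H(\R^2)$ and boundary direction $h_0\parallel(1,\lambda)$ by \eqref{bd:0}. I then choose $u$ so that $\eta:=F_H(u)\in{\rm int}~H$ and $\zeta:=z_{u,v}\neq0$; this is possible because $\{u:F_H(u)\in{\rm int}~H\}$ is open and nonempty while $\{u:z_{u,v}=0\}=\{u:\langle Av,u\rangle=0\}$ is just a line. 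Then $\zeta\parallel h_0$ and, since $\eta$ lies off the boundary line, $\{\eta,\zeta\}$ is LI.

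The key computation is the two-parameter expansion, which using $F_H(v)=0$ reads
\[
F(x+su+tv)=F(x)+s^{2}\eta+s\big(2z_{x,u}+F_L(u)\big)+t\big(2z_{x,v}+F_L(v)+2s\zeta\big).
\]
Because $z_{x,v}=\langle Av,x\rangle(1,\lambda)\parallel\zeta$, in the basis $\{\eta,\zeta\}$ the $\eta$-coordinate of $2z_{x,v}+F_L(v)$ is a constant $c_1$ (namely that of $F_L(v)$), independent of $x$; write $2z_{x,u}+F_L(u)=p_1(x)\eta+p_2(x)\zeta$. If $c_1\neq0$, i.e. $F_L(v)\notin\R h_0$, then for any target $F(x)+\sigma\eta+\rho\zeta$ the $\eta$-equation solves $t=(\sigma-s^2-p_1(x)s)/c_1$ as a quadratic in $s$, and substituting into the $\zeta$-equation produces a cubic in $s$ with leading coefficient $-2\neq0$, hence a real root. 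Thus $F(x)+\R\eta+\R\zeta\subseteq F(\R^2)$, and since $\{\eta,\zeta\}$ spans $\R^2$ this forces $F(\R^2)=\R^2$, which is convex.

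If instead $c_1=0$, i.e. $F_L(v)\in\R h_0$, then $2z_{x,v}+F_L(v)=\mu(x)\zeta$ with $\mu$ an affine function which is nonconstant because $Av\neq0$. For any $x$ in the dense set $X:=\{\mu\neq0\}$ and any $\sigma\geq0,\ \rho\in\R$, the $\eta$-equation $s^2+p_1(x)s=\sigma$ has real roots (discriminant $p_1(x)^2+4\sigma\geq0$), and since the two roots are distinct or reduce to $s=-p_1(x)/2$, at least one of them satisfies $\mu(x)+2s\neq0$, whereupon the $\zeta$-equation solves for $t$. Hence $F(X)+\R\zeta+\R_+\eta\subseteq F(\R^2)$; as $\{\zeta,\eta\}$ is LI and $\overline{X}=\R^2$, Lemma \ref{denso}$(a)$ delivers the convexity of $F(\R^2)$.

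The hard part is not the geometric reduction but the polynomial bookkeeping that makes the inclusions exact. In the case $c_1\neq0$ the crux is that eliminating $t$ yields a genuine cubic (nonvanishing leading term), so a real root always exists; in the case $c_1=0$ the delicate point is the pole $\mu(x)+2s=0$ in the $\zeta$-equation, which I must dodge by the correct choice of quadratic root, together with the verification that $\mu\not\equiv0$ (guaranteed precisely by $Av\neq0$) so that $X=\{\mu\neq0\}$ is dense and Lemma \ref{denso}$(a)$ applies. These two solvability arguments, mirroring the quadratic-equation step used in Lemma \ref{lema01}$(b3)$ and Theorem \ref{lema:connd}, are where the real work lies.
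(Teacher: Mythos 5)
Your proof is correct, and it reaches the conclusion by a route that differs from the paper's in its closing mechanism. The opening reduction coincides: fix $v\neq 0$ with $F_H(v)=0$, complete to a basis with a $u$ for which $\{F_H(u),z_{u,v}\}$ is LI, and expand $F(x+su+tv)$ in that basis. (Your half-plane detour through Proposition \ref{non:nd}$(c)$ and \eqref{bd:0} to select $u$ is harmless but unnecessary: for \emph{any} $u$ completing $v$ to a basis one has $F_H(\R^2)=\{0\}\cup\bigl(\R_{++}F_H(u)+\R z_{u,v}\bigr)$, and ${\rm int}\,F_H(\R^2)\neq\emptyset$ already forces $\{F_H(u),z_{u,v}\}$ to be LI.) Where you genuinely diverge is afterwards. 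The paper reduces convexity to the inclusion $F(\R u+\R v)+F_H(\R u+\R v)\subseteq F(\R u+\R v)$ and invokes Theorem 2 of Ramana--Goldman \cite{rg1995}, splitting on whether the $F_H(u)$-component of $F_L(u)$ vanishes. You split instead on whether the $F_H(u)$-component $c_1$ of $F_L(v)$ vanishes: when $c_1\neq 0$ the elimination yields a genuine cubic and you obtain the stronger conclusion $F(\R^2)=\R^2$; when $c_1=0$ you establish $F(X)+\R\zeta+\R_+\eta\subseteq F(\R^2)$ over the dense set $X=\{\mu\neq 0\}$ and close with Lemma \ref{denso}$(a)$, so your argument stays entirely inside the paper's own toolkit and avoids the external reference, at the price of the density step. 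Both hinge on the same cubic/quadratic solvability. One point you should write out explicitly in the $c_1=0$ case: the double-root situation $p_1(x)^2+4\sigma=0$ together with $\sigma\geq 0$ forces $\sigma=0$ and $p_1(x)=0$, hence $s=0$ and $\mu(x)+2s=\mu(x)\neq 0$ by the choice of $X$ --- that is the precise reason the pole in the $\zeta$-equation is always avoidable.
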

\begin{proof} Let $v\not=0$ satisfying $F_H(v)=0$. Take $u\in\R^2$ such that
$\{u, v\}$ is LI. It follows that $F_H(\R^2)=F_H(\R u+\R v)=\R_+F_H(u)+\R z_{u,v}$.
Since ${\rm int}~F_H(\R^2)\not=\emptyset$, one gets $\{F_H(u), z_{u,v}\}$ is LI.
We will check that $F(\R^{2})=F(\R u+\R v)$ is convex. From Theorem 2 in
\cite{rg1995}, it suffices to prove that
$F(\R u+\R v)=F_H(\R u+\R v)+ F(\R u+\R v)$.\\
Let $\alpha,\,\beta,\,\gamma,\delta\in\R$ and $s=(s_u,s_v),\, h=(h_u,h_v)\in\R^2$
such that $F_{L}(u)=s_{u}z_{u,\, v}+h_{u}F_{H}(u)$ and
$F_{L}(v)=s_{v}z_{u,\, v}+h_{v}F_{H}(u)$. We must find
$\lambda=(\lambda_1,\lambda_2)\in\R^2$ satisfying
$F(\alpha u+\beta v)+F_{H}(\gamma u+\delta v)=F(\lambda_{1}u+\lambda_{2}v)$.
This equality along with the LI of $\{ F_{H}(u), z_{u,v}\}$ lead to the following
two equations:
\begin{eqnarray*}
2\lambda_{1}\lambda_{2}-2\left(\alpha\beta+\gamma\delta\right)+
s_{u}\left(\lambda_{1}-\alpha\right)+s_{v}\left(\lambda_{2}-\beta\right)&=&0\\
\lambda_{2}^{2}-\left(\beta^{2}+\delta^{2}\right)+
h_{u}\left(\lambda_{1}-\alpha\right)+h_{v}\left(\lambda_{2}-\beta\right)&=&0
\end{eqnarray*}
If $h_u\neq 0$, from the second equation we get a expression for $\lambda_{1}$ and
substitutes it into the first one. The obtained equation is polynomial of
third degree in the variable $\lambda_2$, so it admits at least one real zero. Thus
a solution $(\lambda_1,\lambda_2)$ of the above equations is found. \\
Now consider the case $h_{u}=0$. The second equation is quadratic in $\lambda_{2}$
with discriminat $\Delta=\left(h_{v}+2\beta\right)^{2}+4\delta^{2}\geq0$.
Thus the second equation always admits a solution $\lambda_2\in\R$. Since the first
equation is linear in $\lambda_1$, it will be solvable in $\lambda_1$ provided its
coeficient $2\lambda_2+s_u$ is non zero. This is satisfied if $\Delta>0$. If
$\Delta =0$ and the worst case $2\lambda_{2}+s_{u}=0$ fulfills, we easily see
that the first equation is satisfied vacuously.
\end{proof}

In what follows, in view of
$$F(\R^{n})=F(({\rm ker}~A\cap {\rm ker}~B)^\perp)+
F_L({\rm ker}~A\cap {\rm ker}~B),$$
we show that there is no loss of generality in assuming
${\rm ker}~A\cap{\rm ker}~B=\{0\}$. In fact, set $K\doteq
{\rm ker~A}\cap{\rm ker}~B$ with ${\rm dim}~K^\perp=m$. Take a basis
$\{u_i:~1\leq i\leq m\}$ of $K^\perp$. Thus
$F(K^\perp)=\widetilde{F}(\R^{m})$ is a pair of quadratic functions having the
following data: $\widetilde{A}=(\langle u_{i},Au_{j}\rangle)_{ij}$,
$\widetilde{B}=(\langle u_{i},Bu_{j}\rangle)_{ij}$, $\widetilde{a}=(
\langle a,u_{1}\rangle,\ldots, \langle a,u_m\rangle)$
and  $\widetilde{b}=(\langle b,u_{1}\rangle,\ldots, \langle b,u_m\rangle)$.
Let us prove that
$\widetilde K\doteq {\rm ker}~\widetilde A\cap{\rm ker}~\widetilde B=\{0\}$.
Take $z\in \widetilde{K}$. Then
$$\langle u_{i},A(\sum_{j=1}^m z_{j}u_{j})\rangle=0~~{\rm and}~~
\langle u_{i},B (\sum_{j=1}^m z_{j}u_{j})\rangle=0~~~\forall~
i=1,\ldots , m.$$
This means $\displaystyle \sum_{j=1}^m z_{j}u_{j}\in
K^{\bot}\cap K^{\bot\bot}=\{ 0\} $, and so
$\widetilde{K}=\{ 0\} $. This condition will be assumed in $(b)$ of the following
lemma, which is the second main Dines-type result without ND property.

\begin{lem} \label{open:2}
The set $F(\R^{n})$ is convex under any of the following conditions:
\begin{itemize}
\item[$(a)$] $F_L({\rm ker}~A\cap{\rm ker}~B)\not=\{0\};$
\item[$(b)$] $\emptyset\neq {\rm int}~F_{H}(\R^{n})\neq\R^{2}$.
\end{itemize}
\end{lem}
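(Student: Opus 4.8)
\textbf{Proof plan for Lemma \ref{open:2}.}

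The plan is to treat the two cases separately and, in each case, reduce the problem to a Dines-type convexity result already available (Theorem \ref{lema:connd} or Lemma \ref{open:1}) by means of the density/covering Lemma \ref{denso}. The guiding idea is that whenever $F(\R^n)$ fails to be convex it is only because one particular ray is missing from an otherwise convex set; the inhomogeneous linear part $F_L$ supplies exactly such a ray once a suitable kernel direction $v$ with $F_H(v)=0$ is present.

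For part $(a)$, I would pick $0\neq v\in\ker A\cap\ker B$ with $F_L(v)=(\langle a,v\rangle,\langle b,v\rangle)\neq 0$; such a $v$ exists by hypothesis. Then for any $x\in\R^n$ and any $t\in\R$ one computes directly
\[
F(x+tv)=F(x)+t\,F_L(v),
\]
since the quadratic and mixed terms in $v$ vanish. Hence $F(\R^n)+\R F_L(v)\subseteq F(\R^n)$. I would then invoke Lemma \ref{denso} with $X=\R^n$, $h=F_L(v)\neq 0$ and $h_1=h$ (so $\{h_1,h\}$ is LD), which is case $(b)$ of that lemma, to conclude that $F(\R^n)$ is convex. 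The only point requiring care is verifying the exact hypotheses of Lemma \ref{denso}$(b)$, namely that the inclusion \eqref{ine:denso} holds with $X=\R^n$; the displayed identity gives precisely $F(\R^n)+\R h\subseteq F(\R^n)$, so adding the redundant $\R_+h_1$ with $h_1=h$ is immediate.

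For part $(b)$, by the reduction recorded just before the statement I may assume $\ker A\cap\ker B=\{0\}$, since replacing $\R^n$ by $K^\perp$ preserves both the interior condition and convexity. Because $\emptyset\neq\inte F_H(\R^n)\neq\R^2$, the set $F_H(\R^n)$ is a convex cone with nonempty interior that is not all of $\R^2$, so ND must fail (otherwise Theorem \ref{din1941} would force $F_H(\R^n)$ to be pointed or equal to $\R^2$, contradicting the open strict inclusion); thus there is $0\neq v$ with $F_H(v)=0$. For $n=2$ this is exactly the situation of Lemma \ref{open:1}, giving convexity. For $n\geq 3$ I would reduce to dimension two as in the proof of Theorem \ref{lema:connd}$(b)$: it suffices to show $F(\R x+\R y)$ is convex for LI pairs $\{x,y\}$, which lands us back in a two-variable inhomogeneous problem $\widetilde F(\R^2)$, where I apply the two-dimensional argument. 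The main obstacle I anticipate is the two-dimensional core, i.e.\ establishing $F(\R^2)=F_H(\R^2)+F(\R^2)$ (the recession-type identity feeding Lemma \ref{open:1} via \cite{rg1995}); this is where the genuine work lies, since one must solve an explicit system of two polynomial equations for $\lambda=(\lambda_1,\lambda_2)$, using a degree/intermediate-value argument to produce a real root exactly as in Lemma \ref{open:1}. I would handle the reduction cleanly by checking that when $\ker A\cap\ker B=\{0\}$ and $\{x,y\}$ is LI, the interior condition $\inte F_H(\R^2)\neq\emptyset$ is inherited by $\widetilde F_H(\R^2)$, so that Lemma \ref{open:1} applies verbatim to $\widetilde F$.
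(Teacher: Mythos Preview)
Your treatment of part $(a)$ is correct and matches the paper's argument essentially verbatim.

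For part $(b)$ there are two problems, one minor and one fatal.

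\emph{Minor.} Your claim that ND must fail is wrong: a closed pointed cone in $\R^2$ can perfectly well have nonempty interior (e.g.\ $\R_+^2$), so Theorem~\ref{din1941} does not rule out ND under the hypothesis $\emptyset\neq\inte F_H(\R^n)\neq\R^2$. This is easily repaired: if ND holds, Theorem~\ref{lema:connd} already gives convexity of $F(\R^n)$, so one may simply split into cases.

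\emph{Fatal.} The reduction to arbitrary two--dimensional slices does not go through: the interior condition is \emph{not} inherited by $\widetilde F_H(\R^2)=F_H(\R x+\R y)$ for every LI pair $\{x,y\}$, contrary to what you assert you would ``check''. Concretely, take $A=\mathrm{diag}(1,-1,1)$, $B=\mathrm{diag}(1,1,-1)$, $a=e_2$, $b=0$. Then $\ker A\cap\ker B=\{0\}$, $\emptyset\neq\inte F_H(\R^3)\neq\R^2$ (indeed $(-1,-1)\notin F_H(\R^3)$), and ND fails since $F_H(0,1,1)=0$. For the LI pair $x=e_2+e_3$, $y=e_2-e_3$ one computes
\[
\widetilde F(\alpha,\beta)=F(\alpha x+\beta y)=(-4\alpha\beta+\alpha+\beta,\;4\alpha\beta),
\]
so $\widetilde F_H(\R^2)=\R(1,-1)$ has empty interior and Lemma~\ref{open:1} is inapplicable. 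Worse, $\widetilde F(\R^2)$ is itself nonconvex: $\widetilde F(1,1)=(-2,4)$ and $\widetilde F(-1,-1)=(-6,4)$ lie in it, but their midpoint $(-4,4)$ would require $\alpha\beta=1$ and $\alpha+\beta=0$, which has no real solution. Hence the scheme ``prove convexity of $F(\R x+\R y)$ for all LI $\{x,y\}$'' is simply false under the hypotheses of~$(b)$.

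The paper avoids this by \emph{not} slicing along arbitrary planes. After disposing of the ND case via Theorem~\ref{lema:connd}, it fixes a single null vector $v$ (with $F_H(v)=0$), shows that $\{z_{u,v}:u\in\R^n\}=\R\pi$ is a line, and restricts only to planes $\R u+\R v$ with $u$ in a carefully chosen dense set $C_v$ on which $\{F_H(u),z_{u,v}\}$ is LI. On those particular planes Lemma~\ref{open:1} does apply, yielding convexity of $F(\R u+\R v)$ and, via \cite{rg1995}, the inclusion $F(\R u+\R v)+\R\pi+\R_{++}F_H(u)\subseteq F(\R^n)$. Density of $C_v$ then feeds Lemma~\ref{denso}$(a)$ to conclude. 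The key idea you are missing is that one must work with \emph{distinguished} two--dimensional subspaces containing the null direction $v$, and then leverage density rather than attempt a universal slice--by--slice argument.
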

\begin{proof} $(a)$ Let $u\in {\rm ker}~A\cap{\rm ker}~B$ and set
$0\not=h\doteq F_L(u)$. Then, for all $x\in\R^n$,
$F(x+tu)=F(x)+th\in F(\R^n)$. Lemma \ref{denso} yields the desired result.\\
$(b)$: We apply the procedure as above to consider $F(K^\perp)=\widetilde F(\R^m)$
and $F(\R^{n})=\widetilde F(\R^m)+F_{L}(K)$ with
$K\doteq{\rm ker}~A\cap{\rm ker}~B$.
Obviously ${\rm dim}~ K^{\perp}=m\geq2$ since
$\emptyset\neq {\rm int}~F_{H}(\R^{n})={\rm int}~\widetilde{F}_{H}(\R^{m})$. Then,
if ND holds for $\widetilde A$ and $\widetilde B$, by Lemma \ref{lema:connd},
$\widetilde F(\R^m)$ is convex, and so of $F(\R^n)$ as well. In case ND does not
hold, we proceed on $F$, by assuming now that
${\rm ker}~A\cap{\rm ker}~B=\{0\}$.\\
Let $v\neq0$ satisfying $F_{H}(v)=0$. It is not difficult to check that
$\{z_{u, v}:~ u\in\R^{n}\}$ is contained in a line passing through the origin;
actually it is the entire line since $z_{-u,v}=-z_{u,v}$ and
${\rm ker}~A\cap{\rm ker}~B=\{ 0\}$. Thus, $\{ z_{u, v}:~u\in\R^{n}\}=\R\pi$ with
$\pi\neq0$. Let us define
$$X_v\doteq\{ u\in\R^{n}:~z_{u, v}\neq0\},~~Y_v\doteq
\{ u\in\R^{n}:~F_{H}(u)\not\in\R\pi\},$$
and consider $C_v\doteq X_v\cap Y_v$. Besides $X_{v}$ is nonempty since
${\rm ker}~A\cap {\rm ker}~B=\{ 0\}$, it is also dense ($u_{0}\in X_{v}$ implies
$u+\dfrac{1}{k}u_{0}\in X_{v}$ for any $u\in\R^{n}$ and $k\in\N$); $Y_{v}$ is
nonempty in view of $\emptyset\neq{\rm int}~F_{H}(\R^{n})$, and open by continuity.
It is also dense (take $u_{0}\in Y_{v}$  and note that for every
$u\notin Y_{v}$, one has $\langle \pi_{\bot},F_{H}(u+\dfrac{1}{k}u_{0})\rangle=
\dfrac{2}{k}\langle \pi_{\bot},z_{u, u_{0}}\rangle+
\dfrac{1}{k^{2}}\langle \pi_\bot, F_{H}(u_{0})\rangle\neq0$ for all $k\in\N$
sufficiently large, implying $u+\dfrac{1}{k}u_{0}\in Y_{v}$ for all $k\in\N$)
sufficiently large. Consequently, $C_v$ is nonempty and dense since it is the
intersection of two dense sets being one of them open.
Notice that for all $u\in C_v$, $\{u, v\}$ is LI and therefore $F(\R u+\R v)$
satisfies all the assumptions of Lemma \ref{open:1}, so it is convex. Moreover
$$F_{H}(\R u+\R v)=\{ 0\} \cup \{ \R\pi+\R_{++}F_{H}(u)\}=
\{ 0\} \cup\{ h\in\R^{2}:~\langle \pi_{\bot},F_{H}(u)\rangle
\langle \pi_{\bot}, h\rangle>0\},$$
where the second equality follows from Proposition \ref{felo0}.
On the other hand, all the elements of the form
$\langle\pi_{\bot},F_{H}(u)\rangle$ have the same sign since
$F_{H}(\R^{n})\neq\R^{2}$. Hence, by using Theorem 2 in \cite{rg1995}, we
obtain
\begin{eqnarray*}
F(\R^{n})&\supseteq& F(\R u+\R v)= F(\R u+\R v)+F_{H}(\R u+\R v)\\
&\supseteq& F(\R u+\R v)+\R\pi+\R_{++}F_{H}(u)\\
&=&F(\R u+\R v)+\{ h\in\R^{2}:~ r\langle\pi_{\bot},h\rangle>0\}
\end{eqnarray*}
with $F_{H}(u)\not\in\R\pi$ for all $u\in C_v$ and some constant $r\not=0$.
Thus, by Lemma \ref{denso} (with $X=\R C_v+\R v$), $F(\R^{n})$ is convex.
\end{proof}

Next lemma is also new in the literature.
\begin{lem}\label{todoo}
Assume that $F_{H}(\R^{n})=\R^{2}$. Then $n\geq 2$ and
$F(\R^{n})=\R^{2}$.
\end{lem}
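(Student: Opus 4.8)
The claim $n\ge 2$ is immediate: for $n=1$ and a unit generator $e\in\R$ one has $F_H(\R)=\R_+(\langle Ae,e\rangle,\langle Be,e\rangle)$, a ray or a single point, and never all of $\R^2$. So from now on assume $F_H(\R^n)=\R^2$ with $n\ge 2$.

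The plan is to separate the problem into a soft part (the recession behaviour) and a hard part (convexity). For the soft part, by Proposition \ref{co:00}$(b)$ we have, for every $u\neq0$, that ${\rm co}~F(\R u)=F(\R u)+\R_+F_H(u)$; since $F(0)=(0,0)\in F(\R u)$, this convex set contains $\R_+F_H(u)$, and in particular $F_H(u)$. As ${\rm co}~F(\R u)\subseteq{\rm co}~F(\R^n)$, we conclude $F_H(\R^n)\subseteq{\rm co}~F(\R^n)$, whence ${\rm co}~F(\R^n)=\R^2$. Therefore, \emph{once the convexity of $F(\R^n)$ is established}, we obtain $F(\R^n)={\rm co}~F(\R^n)=\R^2$, as desired. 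It thus remains only to prove that $F(\R^n)$ is convex when $F_H(\R^n)=\R^2$.

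For the convexity I would reduce and then split into cases using the earlier lemmas. Using the identity $F(\R^n)=\widetilde F(\R^m)+F_L({\rm ker}~A\cap{\rm ker}~B)$ (the reduction preceding Lemma \ref{open:2}), if $F_L({\rm ker}~A\cap{\rm ker}~B)\neq\{0\}$ then convexity follows at once from Lemma \ref{open:2}$(a)$; otherwise $F(\R^n)=\widetilde F(\R^m)$ with ${\rm ker}~\widetilde A\cap{\rm ker}~\widetilde B=\{0\}$ and $\widetilde F_H(\R^m)=F_H(\R^n)=\R^2$, so there is no loss of generality in assuming ${\rm ker}~A\cap{\rm ker}~B=\{0\}$ from the outset. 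Under this assumption I split according to ND: if ND holds and $n\ge 3$, Lemma \ref{lema:connd}$(b)$ gives convexity directly; if ND fails and $n=2$, Lemma \ref{open:1} applies because ${\rm int}~F_H(\R^2)=\R^2\neq\emptyset$.

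The \textbf{main obstacle} is the two residual degenerate cases not covered above: $(i)$ $n=2$ with ND holding (here Theorem \ref{lema:connd}$(a)$ only returns the excluded alternative $F_H(\R^2)=\R^2$, and no definite combination $\lambda_1A+\lambda_2B\succ0$ exists by Corollary \ref{cor0:eq}, so Polyak's theorem is not available), and $(ii)$ $n\ge 3$ with ND failing, which is exactly the situation of Example \ref{ej:op00}; there the argument of Lemma \ref{open:2}$(b)$ breaks down precisely because, $F_H(\R^n)$ being all of $\R^2$, the scalars $\langle\pi_\bot,F_H(u)\rangle$ no longer share a common sign and the set $\{z_{u,v}:u\in\R^n\}$ need not be a line. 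In case $(i)$ the map $F$ is proper, since ND forces $\|F_H(x)\|\ge m\|x\|^2$ for some $m>0$ and hence $\|F(x)\|\to+\infty$; surjectivity can then be obtained by a topological degree argument along the uniformly proper homotopy $H_s=F_H+sF_L$, $s\in[0,1]$, using that $F_H$ has nonzero (even) degree because its preimages come in sign-preserving $\pm$ pairs. Case $(ii)$ is genuinely non-proper, and I expect it to require a direct construction of a solution of $F(x)=w$ — for instance fixing an isotropic-type coordinate and sweeping the complementary parameters so that the translated images cover the whole plane — rather than an appeal to the convexity machinery. Proving convexity (equivalently, surjectivity) in these two cases is where the real work lies.
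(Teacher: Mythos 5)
Your reduction of the statement to the convexity of $F(\R^n)$ is correct (the observation that $F_H(\R^n)\subseteq{\rm co}\,F(\R^n)$ via Proposition \ref{co:00}$(b)$ is sound), and your case analysis correctly isolates the two hard configurations. But the proposal does not close either of them, so there is a genuine gap. For case $(ii)$ ($n\geq3$ with ND failing, e.g.\ Example \ref{ej:op00}) you offer only the expectation of ``a direct construction,'' which is precisely the part that needs an argument. For case $(i)$ ($n=2$; note that by Proposition \ref{sev:impl} the hypothesis $F_H(\R^2)=\R^2$ forces ND, so your ``$n=2$ with ND failing'' branch is vacuous), the degree sketch has a hole: that preimages come in $\pm$ pairs with equal local signs only shows the degree of $F_H$ is \emph{even}, not that it is nonzero. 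To finish one would need, say, that $F_H$ restricted to the unit circle traces an ellipse $c+M(\cos2\theta,\sin2\theta)$ enclosing the origin when $F_H(\R^2)=\R^2$ and ND holds, giving degree $\pm2$; the paper avoids this entirely by writing $F_L(u),F_L(v)$ in the basis $\{L_1,z_{u,v}\}$ with $F_H(u)=-F_H(v)=L_1$ and solving the resulting system \eqref{dos:1} explicitly.

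The paper's treatment of $n\geq3$ is worth internalizing because it sidesteps your obstacle $(ii)$ completely: it never attempts to prove convexity of the full set $F(\R^n)$ there. Instead, for each closed quadrant it picks $u,v$ with $F_H(u)$ and $F_H(v)$ equal to the two generating rays (e.g.\ $(1,0)$ and $(0,1)$), and restricts to the two-dimensional subspace $\R u+\R v$. Either $F_H(\R u+\R v)=\R^2$, in which case the already-proved $n=2$ case gives $F(\R u+\R v)=\R^2$; or $F_H(\R u+\R v)$ is a proper cone with nonempty interior, in which case Lemma \ref{open:2} gives convexity of $F(\R u+\R v)$ and the identity $F(\R u+\R v)+F_H(\R u+\R v)=F(\R u+\R v)$ (Theorem 2 of \cite{rg1995}) yields $\R_+^2\subseteq F(\R u+\R v)\subseteq F(\R^n)$. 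Repeating over the four quadrants gives $F(\R^n)=\R^2$. So the missing idea in your proposal is this restriction-to-planes covering argument, which converts the intractable global convexity question in case $(ii)$ into four applications of results you already have.
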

\begin{proof} The fact that $n\geq 2$ is obvious. Consider first $n=2$ and let
$L_{1}\in\R^{2}$ be any non-zero vector. Take $u$ and $v$ satisfying
$F_{H}(u)=-F_H(v)=L_{1}$. Thus $\{u, v\}$ is LI. Since $F_{H}(\R^{n})=\R^{2}$,
$\{z_{u,\, v}, L_1\}$ is LI. Set $L_{2}\doteq z_{u, v}$. Then, there exist
$\sigma_i$, $\rho_i$, $i=1, 2$, such that $F_{L}(u)=2\sigma_{1}L_{1}+\rho_{1}L_{2}$
and $F_{L}(v)=-2\sigma_{2}L_{1}+\rho_{2}L_{2}$. Given any $x\in\R^2$, we will find
$\lambda_i\in\R$, $i=1, 2$, satisfying
\begin{equation}\label{dos:0}
x=F((\lambda_{1}-\sigma_{1})u+(\lambda_{2}-\sigma_{2})v).
\end{equation}
On the other hand, we obtain
$$F((\lambda_{1}-\sigma_{1})u+(\lambda_{2}-\sigma_{2})v)=
[\lambda_{1}^{2}-\lambda_{2}^{2}]L_{1}+[2\lambda_{1}\lambda_{2}+
(\rho_{1}-\sigma_{2})\lambda_{1}+(\rho_{2}-\sigma_{1})\lambda_{2}]L_{2}-C,$$
with $C=(\sigma_{1}^{2}-\sigma_{2}^{2})L_{1}+
(\sigma_{1}\rho_{1}+\sigma_{2}\rho_{2})L_{2}$. By writing
$x=x_{1}L_{1}+x_{2}L_{2}-C$ and setting $\pi_1=\rho_{1}-\sigma_{2}$,
$\pi_2=\rho_{2}-\sigma_{1}$, \eqref{dos:0} yields
\begin{equation}\label{dos:1}
x_{1}=\lambda_{1}^{2}-\lambda_{2}^{2},~~~
x_{2}=2\lambda_{1}\lambda_{2}+\pi_{1}\lambda_{1}+\pi_{2}\lambda_{2}.
\end{equation}
We distinguish two cases.\\
Suppose first that the set $\{(2,\pi_1),( \pi_2,-x_2)\}$
is LD. Then, there exists $t_{0}$ such that
$t_0(2,\pi_1)=( \pi_2,-x_2)$.  Thus, the second equation in \eqref{dos:1} reduces
to $0=(\lambda_{1}+t_{0})(2\lambda_{2}+\pi_{1})$. If $0=\lambda_{1}+t_{0}$
then $x_{1}=t_{0}^{2}-\lambda_{2}^{2}$ for any $\lambda_{2}\in\R$; if
$0=2\lambda_{2}+\pi_{1}$ then
$x_{1}=\lambda_{1}^{2}-(\dfrac{\pi_{1}}{2})^{2}$
for any  $\lambda_{1}\in\R$. From this we infer that the first equation in
\eqref{dos:1} is always satisfied as well.\\
Suppose now that the set $\{(2,\pi_1),( \pi_2,-x_2)\}$ is LI, which is equivalent
to $2x_2+\pi_1\pi_2\not=0$ by Proposition  \ref{felo0}. From the second
equation in \eqref{dos:1}, we obtain, by assuming additionally 
$2\lambda_{2}+\pi_{1}\not=0$ (since otherwise we are done)
$$\lambda_{1}=\dfrac{x_{2}-\pi_{2}\lambda_{2}}{2\lambda_{2}+\pi_{1}}=
-\dfrac{\pi_{2}}{2}+\dfrac{2x_{2}+\pi_{1}\pi_{2}}{2(2\lambda_{2}+\pi_{1})}.$$
Thus,
$$x_{1}=\lambda_{1}^{2}-\lambda_{2}^{2}=
\left(-\dfrac{\pi_{2}}{2}+\dfrac{2x_{2}+
\pi_{1}\pi_{2}}{2(2\lambda_{2}+\pi_{1})}\right)^{2}-\lambda_{2}^{2}\doteq
p(\lambda_{2}).$$
Since $p(]-\infty,-\dfrac{\pi_{1}}{2}[)=\R=
p(]-\dfrac{\pi_{1}}{2}, +\infty[)$, we conclude that system \eqref{dos:1} admits
a solution, proving that $F(\R^2)=\R^{2}$.\\
We consider now that $n\geq 3$. Take any $u$ and $v$ satisfying
$F_{H}(u)=(1, 0)$ and $F_{H}(v)=(0, 1)$. Then
$\R_{+}^{2}\subseteq F_{H}(\R u+\R v)$, which implies
${\rm int}~F_{H}(\R u+\R v)\not=\emptyset$. In case
$F_{H}(\R u+\R v)=\R^{2}$, we apply the above result to conclude that
$\R^{2}=F(\R u+\R v)$ and therefore $F(\R^{n})=\R^2$. If on the contrary,
$F_{H}(\R u+\R v)\not=\R^{2}$, from Lemma \ref{open:2}, we get the convexity of
$F(\R u+\R v))$. By Theorem 2 in \cite{rg1995},
$\R_{+}^{2}\subseteq F(\R u+\R v)+F_{H}(\R u+\R v)=
F(\R u+\R v)\subseteq F(\R^{n})$. Similarly, we also get the sets
$-\R_{+}^{2}$, $\R_{+}\times\R_{-}$ and $\R_{-}\times\R_{+}$ are contained
in $F(\R^{n})$, and therefore $F(\R^n)=\R^{2}$.
\end{proof}

By using the previous two lemmas and Theorem \ref{lema:connd},
the following theorem is obtained.

\begin{thm}\label{int:dif:0} Let $n\geq 2$. If either 
${\rm int}~F_H(\R^n)\not=\emptyset$ or ND holds for $A$ and $B$ then 
$F(\R^n)$ is convex.
\end{thm}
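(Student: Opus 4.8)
The plan is to reduce the general statement to the cases already handled by the lemmas and theorem proved above, splitting according to whether \textrm{ND} holds and whether $F_H(\R^n)$ equals $\R^2$. First I would dispose of the degenerate possibility $F_H(\R^n)=\R^2$: by Lemma \ref{todoo} this forces $F(\R^n)=\R^2$, which is trivially convex, so that case is immediate. Having removed it, I may henceforth assume $F_H(\R^n)\neq\R^2$, and by the convexity half of \eqref{eq:f:h} this is equivalent to ${\rm int}~F_H(\R^n)\neq\R^2$.

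Next I would treat the two hypotheses of the theorem separately, since the statement is a disjunction. Suppose \textrm{ND} holds for $A$ and $B$. If $n\geq 3$, then part $(b)$ of Theorem \ref{lema:connd} gives convexity directly. If $n=2$, then part $(a)$ of that same theorem gives that either $F_H(\R^2)=\R^2$ (already handled) or $F(\R^2)$ is convex; in the remaining subcase we again invoke Lemma \ref{todoo}. Thus the \textrm{ND} branch is completely covered by Theorem \ref{lema:connd} together with Lemma \ref{todoo}.

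Now suppose instead that ${\rm int}~F_H(\R^n)\neq\emptyset$. Combined with the reduction above, we are in the situation $\emptyset\neq{\rm int}~F_H(\R^n)\neq\R^2$, which is precisely condition $(b)$ of Lemma \ref{open:2}; that lemma yields the convexity of $F(\R^n)$ at once. The only delicate point is the overlap between the two hypotheses: if ${\rm int}~F_H(\R^n)\neq\emptyset$ but $F_H(\R^n)=\R^2$, Lemma \ref{open:2}$(b)$ does not literally apply because its hypothesis requires the interior to differ from $\R^2$; this is exactly why I would strip off the $F_H(\R^n)=\R^2$ case first via Lemma \ref{todoo}, so that the residual interior hypothesis always falls under Lemma \ref{open:2}$(b)$.

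I do not expect a genuine obstacle here, since all the hard analytic work—the polynomial-solvability arguments, the density constructions in $C_v$, and the appeal to the recession/convexity criterion of \cite{rg1995}—has already been carried out in Lemmas \ref{open:1}, \ref{open:2}, \ref{todoo} and Theorem \ref{lema:connd}. The main care needed is purely bookkeeping: making the case split on $F_H(\R^n)=\R^2$ versus $\neq\R^2$ exhaustive and ensuring each resulting case is matched to exactly one earlier result, with the dimension distinction $n=2$ versus $n\geq 3$ handled correctly inside the \textrm{ND} branch. Once the cases are organized this way, the proof reduces to three short invocations and is complete.
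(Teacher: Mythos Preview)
Your proposal is correct and follows essentially the same approach as the paper: the paper's proof consists of the single sentence ``By using the previous two lemmas and Theorem \ref{lema:connd}, the following theorem is obtained,'' and your case split (first dispose of $F_H(\R^n)=\R^2$ via Lemma \ref{todoo}, then invoke Theorem \ref{lema:connd} under ND and Lemma \ref{open:2}$(b)$ under the interior hypothesis) is exactly the intended unpacking of that sentence.
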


We now describe a procedure to find a suitable change of variable to be used
presently.

\begin{lem} \label{cambio:0} Let $d=(d_{1}, d_{2})\neq0$, and consider
$F$ as in \eqref{bi:quad0} with $A=d_{1}I$ and $B=d_{2}I$ with $I$ being the
identity matrix of order $n$ and $a, b$ any vectors in $\R^n$.
Then, there exist $t_{0}\geq0$, $k\in\R^{2}$, $\bar x\in\R^{2}$ and a square
matrix $C$  satisfying $C^\top C=I$ such that, if $x=Cy-\bar x$ one obtains
\begin{itemize}
\item[$(a)$] $F(x)=\widetilde{F}(y)-k$ where $\widetilde F$ is defined in terms
of $\widetilde{A}=A$, $\widetilde{B}=B$, $\widetilde{a}=-d_{2}t_{0}e_{1}$,
$\widetilde{b}=d_{1}t_{0}e_{1}$ with $e_{1}=(1,0,\ldots,0)\in\R^n;$
\item[$(b)$] if $n\geq2$ then $F(\R^{n})=\widetilde{F}(\R e_{1})+\R_{+}d-k=
{\rm co}~\widetilde{F}(\R e_{1})-k$, and there exists $\bar y\in\R^n$
with $\widetilde{F}_{H}(\bar y)=d$ and $\widetilde{F}_{L}(\bar y)=0;$
\item[$(c)$] the following statements are equivalent (or both fail if $n\geq2):$
\begin{enumerate}
\item[$(c1)$] $\{d, F_{L}(x)\}$ is LI for all $x\in F_H^{-1}(d);$
\item[$(c2)$] $\{d, \widetilde{F}_{L}(y)\}$ is LI for all
$y\in\widetilde{F}_{H}^{-1}(d)$.
\end{enumerate}
\end{itemize}
\end{lem}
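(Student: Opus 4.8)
The plan is to exploit that, with $A=d_1 I$ and $B=d_2 I$, the homogeneous part is already aligned with $d$: one has $F_H(x)=\|x\|^2 d$, so $F_H(\R^n)=\R_+d$. Hence the only real task is to normalize the \emph{linear} part by a rigid motion, first fixing the translation $\bar x\in\R^n$ and then the orthogonal matrix $C$. For $(a)$, writing $x=Cy-\bar x$ and using $\|Cy\|^2=\|y\|^2$ gives
\[
f(Cy-\bar x)=d_1\|y\|^2+\langle C^\top(a-2d_1\bar x),y\rangle+\big(d_1\|\bar x\|^2-\langle a,\bar x\rangle\big),
\]
and the analogous identity for $g$ with $(d_2,b)$ in place of $(d_1,a)$; the quadratic coefficients are already $d_1,d_2$, matching $\widetilde A=A$, $\widetilde B=B$. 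The decisive choice is
\[
\bar x\doteq\frac{d_1a+d_2b}{2\|d\|^2},\qquad w\doteq\frac{d_2a-d_1b}{\|d\|^2},
\]
for then a short simplification yields $a-2d_1\bar x=d_2 w$ and $b-2d_2\bar x=-d_1 w$. Thus \emph{both} residual linear vectors are scalar multiples of the single vector $w$, which is precisely what lets one orthogonal map normalize them simultaneously. I then set $t_0\doteq\|w\|$ and pick $C$ orthogonal with first column $-w/\|w\|$ (any orthogonal $C$ when $w=0$), so that $C^\top w=-t_0e_1$; multiplying by the scalars $d_2$ and $-d_1$ gives $\widetilde a=-d_2t_0e_1$ and $\widetilde b=d_1t_0e_1$. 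Reading off the constant terms defines $k$.

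For $(b)$, I split $y=(y_1,y')\in\R\times\R^{n-1}$; since $\|y\|^2=y_1^2+\|y'\|^2$ one gets $\widetilde F(y)=\widetilde F(y_1e_1)+\|y'\|^2 d$, and as $n\geq2$ the quantity $\|y'\|^2$ sweeps all of $\R_+$, whence $\widetilde F(\R^n)=\widetilde F(\R e_1)+\R_+d$. Because $\widetilde F_H(e_1)=d$, Proposition \ref{co:00} gives $\co\widetilde F(\R e_1)=\widetilde F(\R e_1)+\R_+\widetilde F_H(e_1)=\widetilde F(\R e_1)+\R_+d$, so the three descriptions of $\widetilde F(\R^n)$ coincide. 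Since $(a)$ gives $F(\R^n)=\widetilde F(\R^n)-k$, translating by $-k$ yields the stated equalities for $F(\R^n)$. Finally $\bar y\doteq e_2$ works, since $\widetilde F_H(e_2)=\|e_2\|^2d=d$ while $\widetilde F_L(e_2)=0$ because $\widetilde a,\widetilde b\in\R e_1$ are orthogonal to $e_2$.

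For $(c)$, I would turn both independence conditions into scalar nonvanishing statements using Proposition \ref{felo0}$(b)$. On the transformed side $\widetilde F_L(y)=t_0y_1 d_\perp$, so $\langle d_\perp,\widetilde F_L(y)\rangle=t_0y_1\|d\|^2$ and $\{d,\widetilde F_L(y)\}$ is LI iff $t_0y_1\neq0$. On the original side $\langle d_\perp,F_L(x)\rangle=\langle d_1b-d_2a,x\rangle=-\|d\|^2\langle w,x\rangle$, so $\{d,F_L(x)\}$ is LI iff $\langle w,x\rangle\neq0$. Since $F_H^{-1}(d)=\widetilde F_H^{-1}(d)$ is the unit sphere and $t_0=\|w\|$, one checks that $(c1)$ holds iff $\langle w,x\rangle\neq0$ for every unit $x$, and $(c2)$ iff $t_0y_1\neq0$ for every unit $y$; both reduce to ``$n=1$ and $w\neq0$''. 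In particular, for $n\geq2$ the unit choices $x\perp w$ and $y=e_2$ defeat them, so $(c1)$ and $(c2)$ are (trivially) equivalent, both being false.

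The only genuinely delicate point is the simultaneous normalization in $(a)$: a priori one orthogonal map cannot send two independent vectors to prescribed multiples of $e_1$, so the crux is the computation showing that the translation $\bar x$ collapses the two residual linear parts onto the common line $\R w$. Once this is in hand everything else is bookkeeping, the sole degenerate subcase being $w=0$, where $t_0=0$ and any orthogonal $C$ serves.
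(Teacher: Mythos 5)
Your proof is correct and follows essentially the same route as the paper: your explicit choices $\bar x=(d_1a+d_2b)/(2\|d\|^2)$ and $w=(d_2a-d_1b)/\|d\|^2$ are exactly the paper's componentwise decomposition $(a_i,b_i)=2\bar x_i d+\bar y_i d_\perp$ (with $w=-\bar y$), followed by the same rotation sending $w$ to a multiple of $e_1$, and parts $(b)$ and $(c)$ agree in substance, your $(c)$ merely being a more explicit computation on the unit sphere where the paper argues via $\widetilde F_L(y)=F_L(Cy)-2z_{Cy,\bar x}$ with $z_{Cy,\bar x}\in\R d$.
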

\begin{proof} $(a)$: As $d$ and $d_{\bot}$ are LI, there exist
$\bar x,~\bar y\in\R^{n}$ satisfying $\left(\begin{array}{c}
a_{i}\\
b_{i}
\end{array}\right)=2\bar x_{i}d+\bar y_{i}d_{\bot}$ for all $i$. For any $x\in\R^n$,
we write
\begin{eqnarray*}
F(x)&=&\langle x,x\rangle d+2\langle\bar x,x\rangle d+\langle\bar y,x
\rangle d_{\bot}\\
&=&\langle x+\bar x,x+\bar x\rangle d+\langle\bar y,x+\bar x\rangle d_\bot-
\langle\bar x,\bar x\rangle d-\langle \bar x,\bar y\rangle d_{\bot}.
\end{eqnarray*}
If $\bar y=0$, we choose $t_{0}=0$, $C=I$ and the conclusion follows; otherwise
take $x+\bar x=Cy$ with $C=\left(\begin{array}{cc}
\dfrac{\bar y}{\|\bar y\|} & W\end{array}\right)$ where $W$ is any matrix
having as columns a ortonormal basis of $\bar y^\perp$.
Clearly $C^{\top}C=I$ and, by choosing $t_{0}=\|\bar y\|$, we get
\begin{equation}\label{f:to:tildef}
F(x)=\widetilde F(y)-k,~~\widetilde F(y)=\langle y,y\rangle d+
t_0y_1 d_{\bot},~~k\doteq \|\bar x\|^2d+\langle\bar x,\bar y\rangle d_{\bot}.
\end{equation}
$(b)$: From the last equality, we obtain
$$F(x)=y_{1}^{2}d+\|\bar y\|y_{1} d_\bot+d\sum_{i\geq2}y_{i}^{2}-k,$$
which implies that $F(\R^{n})=\widetilde{F}(\R e_{1})+\R_{+}d-k$; the second
equality in $(b)$ follows from Proposition  \ref{co:00} since
$\widetilde{F}_H(e_1)=d$. In addition, we obtain
$\widetilde{F}_{H}(e_{2})=d$ and $\widetilde{F}_{L}(e_{2})=0$.\\
$(c1)\Rightarrow(c2)$: From above we deduce
$$F(x)=F(Cy-\bar x)=F(Cy)+F(-\bar x)-2z_{Cy,\bar x}=
\widetilde{F}(y)-k,$$
with $k=-F(-\bar x)$, $\widetilde{F}_{H}(y)=F_{H}(y)=F_{H}(Cy)$ and
$\widetilde{F}_{L}(y)=F_{L}(Cy)-2z_{Cy,\bar x}$.
Let $y\in\widetilde{F}_{H}^{-1}(d)$. Then $F_{H}(Cy)=d$, and by $(c1)$
$\{F_{L}(Cy), d\}$ is LI. Thus $\{\widetilde{F}_{L}(y), d\}$ is LI as well, since
$\widetilde{F}_L(y)=F_{L}(Cy)-2z_{Cy,\bar x}$ and
$z_{Cy,\bar x}=\left(\begin{array}{c}
\langle \bar x,ACy\rangle\\
\langle \bar x,BCy\rangle
\end{array}\right)=\langle\bar x,Cy\rangle d$.\\
$(c2)\Rightarrow(c1)$: it is similar.\\
In case $n\geq 2$, both expressions $(c1)$ and $(c2)$ fail in view of $(b)$.
\end{proof}

Next theorem characterizes those directions $d$ under which $F(\R^n)+\R_+d$
is convex.
\begin{thm} \label{teor:1000} Let $f, g$ be any quadratic functions as above
and $d=(d_1,d_2)\in\R^2$, $d\not=0$. The following assertions are equivalent:
\begin{itemize}
\item[$(a)$] $F(\R^n)+\R_{+}d$ is nonconvex;
\item[$(b)$] The following hold:
\begin{enumerate}
\item[$(b1)$] $F_L({\rm ker}~A\cap{\rm ker}~B)=\{0\};$
\item[$(b2)$] $d_2A=d_1 B;$
\item[$(b3)$] $F_H^{-1}(-d)\not=\emptyset$ and $\{d,F_L(u)\}$ is LI for all
$u\in F_H^{-1}(-d).$
\end{enumerate}
\end{itemize}
\end{thm}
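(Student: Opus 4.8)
The plan is to prove the equivalence $(a)\Longleftrightarrow(b)$ by first reducing to a normal form and then analyzing the one-dimensional structure that governs nonconvexity. The guiding intuition is that $F(\R^n)+\R_+d$ fails to be convex \emph{only} in a very rigid situation: when the homogeneous quadratic part $F_H$ lives in a line $\R d$ (so that adding $\R_+d$ can never supply the ``filling'' needed for convexity), and the direction $-d$ arises as $F_H(u)$ along some ray where the inhomogeneous linear part escapes that line. I would organize the argument around the contrapositive of each implication, using the already-established convexity theorems to dispatch all the ``generic'' cases.

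\medskip
\noindent\textbf{The implication $\neg(b)\Longrightarrow\neg(a)$ (i.e. convexity).}
First I would show that if any one of $(b1),(b2),(b3)$ fails, then $F(\R^n)+\R_+d$ is convex. If $(b1)$ fails, then $F_L(\ker A\cap\ker B)\neq\{0\}$, so Lemma \ref{open:2}$(a)$ already gives convexity of $F(\R^n)$ itself, hence of $F(\R^n)+\R_+d$. If $(b2)$ fails, then $\{A,B\}$ are not proportional in the sense $d_2A\neq d_1B$; I would argue this forces $\mathrm{int}\,F_H(\R^n)\neq\emptyset$ or at least that $F_H(\R^n)$ is not contained in the line $\R d$, and then invoke Theorem \ref{int:dif:0} (via $\mathrm{int}\,F_H(\R^n)\neq\emptyset$) or Lemma \ref{open:2}$(b)$. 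The most delicate sub-case is when $(b1)$ and $(b2)$ hold but $(b3)$ fails: here $d_2A=d_1B$ means $A,B$ are simultaneously a scalar multiple of a single matrix along the relevant directions, and the reduction of Lemma \ref{cambio:0} (with $A=d_1I$, $B=d_2I$ after the orthogonal change of variables) becomes available. In that normal form $F(\R^n)=\mathrm{co}\,\widetilde F(\R e_1)-k$, and the failure of the LI condition in $(b3)$ — either $F_H^{-1}(-d)=\emptyset$ or $\{d,F_L(u)\}$ LD for some such $u$ — is exactly the condition under which Lemma \ref{lema01}$(b)$ (cases $b1$, $b3$) or part $(a)$ yields convexity of the one-dimensional slice plus $\R_+d$.

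\medskip
\noindent\textbf{The implication $(b)\Longrightarrow(a)$ (nonconvexity).}
Conversely, assuming all of $(b1),(b2),(b3)$, I would exhibit nonconvexity explicitly. Condition $(b2)$ lets me apply Lemma \ref{cambio:0}: after the orthogonal substitution $x=Cy-\bar x$, we may assume $A=d_1I$, $B=d_2I$, and by part $(b)$ of that lemma $F(\R^n)$ collapses (up to the translation $-k$) onto $\mathrm{co}\,\widetilde F(\R e_1)$, a single parabolic one-dimensional range. Condition $(b3)$ transports under $(c1)\Leftrightarrow(c2)$ of Lemma \ref{cambio:0} to guarantee that $\widetilde F_H^{-1}(-d)\neq\emptyset$ with $\{d,\widetilde F_L(u)\}$ LI for the relevant $u$; this is precisely the hypothesis of Lemma \ref{lema01}$(b2)$, which asserts that $\widetilde F(\R u)+\R_+(-F_H(u))=\widetilde F(\R u)+\R_+d$ equals $\overline{\mathcal C(\mathrm{co}\,\widetilde F(\R u))}$, a \emph{nonconvex} set (the closed complement of a parabolic region). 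Because $(b1)$ ensures no extra linear direction from $\ker A\cap\ker B$ can enlarge the range and destroy this pathology, the nonconvexity of the slice survives in $F(\R^n)+\R_+d$, giving $(a)$.

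\medskip
\noindent\textbf{Main obstacle.}
I expect the hardest part to be the bookkeeping in the middle case of the convexity direction, where $(b1),(b2)$ hold but $(b3)$ fails: one must carefully confirm that the change of variables in Lemma \ref{cambio:0} is genuinely applicable (it requires $A=d_1I$, $B=d_2I$, so $(b2)$ alone gives proportionality but one needs the common scalar matrix to be a multiple of the identity, which follows only after restricting to $(\ker A\cap\ker B)^\perp$ and using $(b1)$), and that the two sub-failures of $(b3)$ really land in the convex cases $(a)$, $(b1)$ or $(b3)$ of Lemma \ref{lema01} rather than the nonconvex case $(b2)$. Verifying that $F_H(\R^n)\subseteq\R d$ exactly when $(b1)$ and $(b2)$ both hold — so that adding $\R_+d$ is the only possible convexifying move and the whole problem reduces to one dimension — is the conceptual crux on which the entire dichotomy rests.
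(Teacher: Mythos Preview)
Your plan has two concrete gaps, one in each direction.

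For $\neg(b2)\Rightarrow$ convexity, you propose to invoke Theorem \ref{int:dif:0} or Lemma \ref{open:2}$(b)$, but both of those yield convexity of $F(\R^n)$ only under the hypothesis ${\rm int}\,F_H(\R^n)\neq\emptyset$, and that hypothesis can fail even when $(b2)$ fails. Concretely, if ${\rm int}\,F_H(\R^n)=\emptyset$ then $F_H(\R^n)\subseteq\R e$ for some $e$, which forces $e_2A=e_1B$; failure of $(b2)$ only says that $e$ and $d$ are linearly independent, not that $F_H(\R^n)$ has interior. The paper resolves this by a device you do not mention: it embeds the problem in $\R^{n+1}$ via $\widetilde F$ with data $\widetilde A=\begin{pmatrix}A&0\\0&d_1\end{pmatrix}$, $\widetilde B=\begin{pmatrix}B&0\\0&d_2\end{pmatrix}$, so that $\widetilde F(\R^{n+1})=F(\R^n)+\R_+d$ and $\widetilde F_H(\R^{n+1})=F_H(\R^n)+\R_+d$. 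When $F_H(\R^n)\not\subseteq\R d$ this last set \emph{does} have nonempty interior, and Lemma \ref{open:2}$(b)$ applied to $\widetilde F$ gives the convexity you need. Similarly, for the failure of the LI part of $(b3)$, the paper does not reduce to Lemma \ref{lema01} on a one-dimensional slice; it argues directly that if $F_H(u)=-d$ and $F_L(u)\in\R d$ then $F(x)+\R d\subseteq F(\R^n)$ for every $x$ (using $z_{x,u}\in\R d$ from $(b2)$), and then Lemma \ref{denso} gives convexity of $F(\R^n)$.

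For $(b)\Rightarrow(a)$, your reduction is wrong: Lemma \ref{cambio:0} requires $A=d_1I$ and $B=d_2I$, and condition $(b2)$ gives only $B=(d_2/d_1)A$ with $A$ an arbitrary symmetric matrix. Restricting to $(\ker A\cap\ker B)^\perp$ does \emph{not} make $A$ a multiple of the identity; indeed if $A=d_1I$ then $F_H(u)=\|u\|^2d\in\R_+d$ for all $u$, so $F_H^{-1}(-d)=\emptyset$ and $(b3)$ could never hold --- the negative eigenvalues of $A$ are essential. The paper's route is to first diagonalize $A$ via the spectral theorem to $d_1\begin{pmatrix}I_{m_1}&0\\0&-I_{m_2}\end{pmatrix}$ (with $m_2\geq 1$ forced by $(b3)$), apply Lemma \ref{cambio:0} separately to the positive and negative blocks, deduce $m_2=1$ from Lemma \ref{cambio:0}$(c)$, and then exhibit nonconvexity by an explicit computation: $\widetilde F(\pm\gamma e_{m+1})=-\gamma^2 d\pm\gamma t_1 d_\perp$ lie in the range but their midpoint $-\gamma^2 d$ does not, for large $\gamma$. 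Your one-dimensional parabola picture via Lemma \ref{lema01}$(b2)$ does not survive the presence of both signs.
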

\begin{proof} $(a)\Rightarrow(b)$: From Lemma \ref{open:2}, we get
$F_L({\rm ker}~A\cap{\rm ker}~B)=\{0\}$ and so $(b1)$ holds, and additionally
${\rm int}~F_H(\R^n)=\emptyset$. We now introduce the function $\widetilde F$
which has the same form as $F$, but on $\R^{n+1}$, with
$\widetilde F(0)=0$ and  data
$$\widetilde{A}=\left(\begin{array}{cc}
A & 0\\
0 & d_{1}
\end{array}\right),~ \widetilde{B}=\left(\begin{array}{cc}
B & 0\\
0 & d_{2}
\end{array}\right),~\widetilde{a}=\left(\begin{array}{c}
a\\
0
\end{array}\right),~\widetilde{b}=\left(\begin{array}{c}
b\\
0
\end{array}\right).$$
Then, we get
$\widetilde F(\R^{n+1})=F(\R^n)+\R_+ d$ and $\widetilde F_H(\R^{n+1})=
F_H(\R^n)+\R_+ d$.
Since $(b2)$ holds if and only if $F_H(\R^n)\subseteq\R d$, one gets
${\rm int}~\widetilde F(\R^{n+1})\not=\emptyset$ if $F_H(\R^n)\not\subseteq\R d$.
Hence, if $(b2)$ does not hold $\widetilde F(\R^{n+1})$ is convex by Lemma
\ref{open:2}, that is, $F(\R^n)+\R_+ d$ is convex, proving $(a)$ implies
$(b2)$.\\
We now check that $F_H^{-1}(-d)\not=\emptyset$. If on the contrary
$-d\not\in F_H(\R^n)$, we get $F_H(\R^n)\subseteq \R_+ d$  by $(b2)$.
Thus, either $F_H(\R^n)=\{0\}$ or
$F_H(\R^n)=\R_+ d$. In the first case $A=0$ and $B=0$, implying the
convexity of $F(\R^n)$, which is not possible if $(a)$ is assumed. The second case
is also impossible due to $(c)$ of Proposition \ref{non:nd}, proving
the first part of $(b3)$. Let us prove the second part
of $(b3)$. Take $u\in\R^n$ such that $F_{H}(u)=-d$ and $F_{L}(u)=\lambda_{0}d$
for some $\lambda_0\in\R$.
From $(b2)$ for all $x\in\R^{n}$, $z_{x, u}\in\R d$. This along with
the fact that $F(x+tu)=F(x)+2z_{x, u}-t^{2}d+t\lambda_{0}d$, yield
$F(x)+\R d\in F(\R^{n})$ for all $x\in\R^n$. Thus, the convexity of
$F(\R^{n})$ follows from Lemma \ref{denso}, which contradicts $(a)$.\\
$(b)\Rightarrow(a)$: By a spectral theorem, we can find a non singular matrix
$D$ satisfying $D^\top AD=d_{1}\left(\begin{array}{cc}
I_{m_{1}} & 0\\
0 & -I_{m_{2}}
\end{array}\right)$, where $I_{l}$ denotes the identity matrix or order $l$ (in
view of $(b1)$ we may ignore the null eigenvalues if any), and
$m_{2}\geq1$ by $(b3)$. From $(b2)$,
we also get $D^{T}BD=d_{2}\left(\begin{array}{cc}
I_{m_{1}} & 0\\
0 & -I_{m_{2}}
\end{array}\right)$.\\
We apply the preceding lemma to both blocks corresponding to the matrices $A$ and
$B$. Thus, we obtain $m_{2}=1$ since otherwise $(b3)$ would be impossible by
virtue of $(c)$ in Lemma \ref{cambio:0}. Hence we may assume from now on
$m_{1}=m$ and $m_{2}=1$. From Lemma \ref{cambio:0}, there exist $0\leq t_{1}$,
$t_{2}$, $k\in\R^{2}$, $\bar x\in\R^{n}$  and a square matrix
$C=\left(\begin{array}{cc}
C_{m_{1}} & 0\\
0 & C_{m_{2}}
\end{array}\right)$
such that $C^\top C=I$, and if $x=Cy-\bar x$, one obtains
$F(x)=\widetilde{F}(y)-k$, where $\widetilde F$ is as $F$ with data
$\widetilde{A}=D^\top AD$,
$\widetilde{B}=D^\top B D$,
$\widetilde{a}=-d_{2}t_{1}e_{1}-d_{2}t_{2}e_{m+1}$, and
$\widetilde{b}=d_{1}t_{1}e_{1}+d_{1}t_{2}e_{m+1}$.\\
By $(b3)$, $\{d, \widetilde{F}_{L}(y)\}$ is LI for all
$y\in\widetilde{F}_{H}^{-1}(-d)$. This last expression means
$\widetilde{F}_{H}(y)=(\sum_{i=1}^my_{i}^{2}-y_{m+1}^{2})d=-d$, which reduces to
$y_{m+1}^{2}=1+\sum_{i=1}^m y_{i}^{2}$. \\
On the other hand, $\widetilde{F}_{L}(y)=(t_{1}y_{1}+t_{2}y_{m+1})d_{\bot}$. Then
$\{d, \widetilde{F}_{L}(y)\}$ is LI if and only if $t_{1}y_{1}+t_{2}y_{m+1}\not=0$.

Let us show that $F(\R^{n})+\R_{+}d$ is nonconvex. First, observe that
$\widetilde{F}(\pm\gamma e_{m+1})=-\gamma^{2}d\pm\gamma t_{1}d_{\bot}$,
so that $-\gamma^{2}d\pm\gamma t_{1}d_{\bot}-k\in F(\R^{n})$ for all $\gamma>0$.
We now check that for all $\gamma>0$,
$$-\gamma^{2}d-k=
\cfrac{-\gamma^{2}d+\gamma t_{1}d_{\bot}-k-\gamma^{2}d-
\gamma t_{1}d_{\bot}-k}{2}\not\in F(\R^{n}),$$
which turn out that $-\gamma^{2}d-k\notin F(\R^{n})+\R_{+}d$
for all $\gamma$ sufficiently large. Assume that there exists $y\in\R^{m+1}$ such
that $-\gamma^{2}d=\widetilde{F}(y)$. But $\widetilde{F}(y)=
\left(\overset{m}{\underset{i=1}{\sum}}y_{i}^{2}-y_{m+1}^{2}\right)d+
(t_{1}y_{1}+t_{2}y_{m+1})d_{\bot}$, so
\begin{equation}\label{ecua:100}
y_{m+1}^{2}=\gamma^{2}+\overset{m}{\underset{i=1}{\sum}}y_{i}^{2}~~{\rm and}~~
t_{1}y_{1}+t_{2}y_{m+1}=0.
\end{equation}
This yield a contradiction, since the second equality implies that
$\widetilde F(\dfrac{1}{\gamma}y)=\widetilde F_H(\dfrac{1}{\gamma}y)=-d$ and
therefore $\{d,\widetilde F_L(\dfrac{1}{\gamma}y)\}$ must be LI,
that is, as observed above,
$t_{1}y_{1}+t_{2}y_{m+1}\not=0$.
\end{proof}

From the preceding result the following theorem follows.

\begin{thm}\label{todo:0}
Let $n\geq 1$ and $f, g$ be any quadratic functions as above.
If $F(\R^n)+\R_+d$ is convex for all $d\in\R^2$, $d\not=0$, then
$F(\R^n)$ is convex.
\end{thm}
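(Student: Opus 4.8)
The plan is to argue by contraposition: assuming $F(\R^n)$ is nonconvex, I will exhibit a single direction $d\neq 0$ for which $F(\R^n)+\R_+d$ is nonconvex, and I will detect this $d$ through the characterization in Theorem \ref{teor:1000}. Thus it suffices to produce one $d$ satisfying conditions $(b1)$, $(b2)$ and $(b3)$ of that theorem. The case $n=1$ is immediate from Theorem \ref{teo:one}: if $F(\R u)$ (with $0\neq u\in\R$) is nonconvex, then $\{F_H(u),F_L(u)\}$ is LI, and taking $d=-F_H(u)$ gives $-d\in\R_+F_H(u)$, so $F(\R u)+\R_+d$ is nonconvex. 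So assume $n\geq 2$ from now on.

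First I would extract $(b1)$ and the line structure of the homogeneous range. Since $F(\R^n)$ is nonconvex, Lemma \ref{open:2}$(a)$ forces $F_L({\rm ker}\,A\cap{\rm ker}\,B)=\{0\}$, which is exactly $(b1)$; and Theorem \ref{int:dif:0} forces ${\rm int}\,F_H(\R^n)=\emptyset$. As $F_H(\R^n)$ is a convex cone (Theorem \ref{din1941}) with empty interior, it is contained in a line through the origin; moreover $F_H(\R^n)\neq\{0\}$, for $F_H(\R^n)=\{0\}$ would mean $A=B=0$, making $F$ affine and hence convex. Now I would fix the direction by choosing any $0\neq w\in F_H(\R^n)$ and setting $d\doteq -w$. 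This immediately gives $F_H(\R^n)\subseteq\R d$, which is equivalent to $(b2)$ (that is, $d_2A=d_1B$), together with $-d=w\in F_H(\R^n)$, i.e. $F_H^{-1}(-d)\neq\emptyset$ — the first half of $(b3)$.

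It then remains to verify the linear-independence half of $(b3)$: that $\{d,F_L(u)\}$ is LI for every $u\in F_H^{-1}(-d)$. Here I would argue by contradiction inside this step. If some $u$ with $F_H(u)=-d$ had $F_L(u)=\lambda_0 d\in\R d$, then, using $(b2)$ (which yields $z_{x,u}\in\R d$ for all $x$) together with the expansion $F(x+tu)=F(x)+t(2z_{x,u}+F_L(u))+t^2F_H(u)$, every translate $F(x)+\R d$ would lie in $F(\R^n)$, and Lemma \ref{denso} would give the convexity of $F(\R^n)$ — contradicting our standing assumption. Hence the independence condition holds, all of $(b1)$–$(b3)$ are met for this $d$, and Theorem \ref{teor:1000} certifies that $F(\R^n)+\R_+d$ is nonconvex, which establishes the contrapositive.

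The genuinely delicate point is this last verification: ruling out the degenerate alternative $F_L(u)\in\R d$ on $F_H^{-1}(-d)$. That is the only place where the reduction could fail, and it is resolved by reusing the collapse-to-convexity mechanism already built into the proof of Theorem \ref{teor:1000} (through Lemma \ref{denso}). Everything else is bookkeeping assembled from Theorem \ref{int:dif:0}, Lemma \ref{open:2}$(a)$, and the convex-cone structure of $F_H(\R^n)$ guaranteed by Theorem \ref{din1941}.
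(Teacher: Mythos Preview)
Your contrapositive strategy is sound, but the inner contradiction step is wrong as written. From the expansion $F(x+tu)=F(x)+[2tc(x)+t\lambda_0-t^2]\,d$ (with $z_{x,u}=c(x)d$) the coefficient of $d$ is a downward parabola in $t$, so its range is only $(-\infty,\beta(x)^2/4]$, not all of $\R$. Hence you only obtain $F(x)-\R_+d\subseteq F(\R^n)$, not $F(x)+\R d\subseteq F(\R^n)$, and Lemma~\ref{denso} (which requires the full line $\R h$) does not apply. More seriously, the implication ``some $u\in F_H^{-1}(-d)$ has $F_L(u)\in\R d$ $\Rightarrow$ $F(\R^n)$ convex'' is simply false: take $n=2$, $f(x)=x_1^2-x_2^2$, $g(x)=x_2$, so $F_H(\R^2)=\R(1,0)$. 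If you pick $w=(1,0)$ and $d=-w=(-1,0)$, then $u=(1,0)\in F_H^{-1}(-d)$ with $F_L(u)=(0,0)\in\R d$, yet $F(\R^2)=\{(s,t):s\ge -t^2\}$ is nonconvex. Thus your choice of ``any $w\in F_H(\R^n)$'' can land on the wrong sign, and the argument breaks.

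What actually holds is weaker: from one such $u$ you get $F(\R^n)-\R_+d\subseteq F(\R^n)$, which only yields that $F(\R^n)+\R_+d=F(\R^n)+\R d$ is convex (project along $d$: the image is the range of the linear map $x\mapsto\langle d_\perp,F_L(x)\rangle$), not that $F(\R^n)$ is convex. To close the contrapositive you must treat \emph{both} signs: if $(b3)$ fails for $d$ \emph{and} for $-d$ (when both preimages are nonempty), the two half-line inclusions combine to $F(\R^n)+\R d\subseteq F(\R^n)$, and then Lemma~\ref{denso}(b) gives convexity; if $F_H(\R^n)$ is only a ray $\R_+(-d)$, the single inclusion already gives $F(\R^n)+F_H(\R^n)\subseteq F(\R^n)$, and one concludes via Theorem~2 of \cite{rg1995} rather than Lemma~\ref{denso}. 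This is exactly how the paper proceeds: it uses the hypothesis on \emph{all} $d$ (hence on both $\pm d$) to force $F(\R^n)+F_H(\R^n)\subseteq F(\R^n)$, and then invokes \cite{rg1995}.
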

\begin{proof} If $F(\R^n)$ is nonconvex then $F_H(\R^n)\not=\{0\}$, and
by Lemma \ref{open:2}, $F_L({\rm ker}~A\cap {\rm ker}~B)=\{0\}$ and
${\rm int}~F_H(\R^n)=\emptyset$.
From the latter condition, $F_H(\R^n)\subseteq \R d$ for some $d\in\R^2$,  which is
equivalent, as seen in the proof of the previous theorem, to $d_2A=d_1B$.
Actually either $F_H(\R^n)=\R d$ or $F_H(\R^n)=\R_+ d$ or $F_H(\R^n)=-\R_+ d$.
In case $F_H^{-1}(-d)\not=\emptyset$, we proceed as follows. By Theorem
\ref{teor:1000}, $\{d,F_L(u)\}$ is LD for some (all) $u\in F_H^{-1}(-d)$. Then,
for such $u$, $F_L(u)=\gamma d$ for some $\gamma\in\R$. On the other hand, for
all $x\in\R^n$, all $t\in\R$, assuming $d_2\not=0$, one has
$$F(x+d_2tu)=F(x)-d_2^2t^2d+\gamma d_2td+2td_2z_{x,u}=
F(x)-d_2^2t^2d+\gamma d_2 td+2t\langle Bx,u\rangle d$$
(In case $d_1\not=0$, one has $F(x+d_1tu)=
F(x)-d_1^2t^2d+\gamma d_1 td+2t\langle Ax,u\rangle d$).
From  this, we infer $F(\R^n)-\R_+d\subseteq F(\R^n)$. If $F_H^{-1}(-d)=\emptyset$
but $F_H^{-1}(d)\not=\emptyset$, we work with $\tilde d=-d$ to conclude with
the same equality as above, implying $F(\R^n)-\R_+\tilde d\subseteq F(\R^n)$.
Thus $F(\R^n)+\R_+d\subseteq F(\R^n)$. The previous reasoning proves, in any
of the three situations for $F_H(\R^n)$, that $F(\R^n)+F_H(\R^n)\subseteq F(\R^n)$.
Hence, $F(\R^n)$ is convex as a consequence of Theorem 2 in \cite{rg1995}, so a
contradiction is reached, establishing that in fact $F(\R^n)$ is convex.
\end{proof}

By combining the last two theorems, we obtain the next result which characterizes
the convexity of joint-range for a pair of quadratic functions.
 \begin{thm}\label{conv:char}
Let $n\geq 1$ and $f, g$ be any quadratic functions as above. Then,
$F(\R^n)$ is convex if, and only if for all $d\in\R^2$, $d\not=0$, any of the 
following conditions hold:
\begin{itemize}
\item[$(C1)$] $F_L({\rm ker}~A\cap{\rm ker}~B)\not=\{0\};$
\item[$(C2)$] $d_1B\not=d_2A;$
\item[$(C3)$] $F_H^{-1}(-d)=\emptyset;$
\item[$(C4)$] $\{d,F_L(u)\}$ is LD for some $u\in F_H^{-1}(-d).$
\end{itemize}
\end{thm}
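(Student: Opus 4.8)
The plan is to obtain this characterization purely by a logical combination of Theorem \ref{teor:1000} and Theorem \ref{todo:0}, introducing no new geometric input. The crucial observation is that the four conditions $(C1)$--$(C4)$ are exactly the negation, item by item, of the list $(b1)$--$(b3)$ in Theorem \ref{teor:1000}. Indeed, the failure of $(b1)$ is precisely $(C1)$; the failure of $(b2)$ is precisely $(C2)$; and the failure of $(b3)$---which is a conjunction asserting both $F_H^{-1}(-d)\neq\emptyset$ and the linear independence of $\{d,F_L(u)\}$ for every $u\in F_H^{-1}(-d)$---splits into the disjunction $(C3)$ or $(C4)$. Since Theorem \ref{teor:1000} states the equivalence $(a)\Leftrightarrow(b)$, reading it contrapositively gives, for each fixed $d\neq0$, the single equivalence that drives the whole argument:
\begin{equation*}
F(\R^n)+\R_{+}d \text{ is convex} \iff \text{at least one of } (C1)\text{--}(C4) \text{ holds for that } d.
\end{equation*}

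For the forward implication I would assume $F(\R^n)$ is convex and fix an arbitrary $d\neq0$. Since $\R_{+}d$ is convex and the Minkowski sum of two convex sets is convex, $F(\R^n)+\R_{+}d$ is convex; the displayed equivalence then forces one of $(C1)$--$(C4)$ to hold for that $d$. As $d$ was arbitrary, the stated condition holds for all $d\neq0$. For the reverse implication I would assume that for every $d\neq0$ at least one of $(C1)$--$(C4)$ holds; the displayed equivalence, read in the other direction, yields that $F(\R^n)+\R_{+}d$ is convex for every $d\neq0$, and Theorem \ref{todo:0} concludes at once that $F(\R^n)$ is convex.

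Because the reasoning is almost entirely formal, I do not expect a genuine obstacle. The only point demanding care is the correct transcription of the negation of $(b3)$: as $(b3)$ is the conjunction of an existence statement with a universally quantified independence statement, its negation is a disjunction, and one must check that $(C3)$ (emptiness of $F_H^{-1}(-d)$) together with $(C4)$ (existence of some $u\in F_H^{-1}(-d)$ with $\{d,F_L(u)\}$ linearly dependent) exhaust exactly that negation. Once this bookkeeping is granted, the theorem follows with no further computation.
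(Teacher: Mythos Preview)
Your proposal is correct and follows exactly the approach the paper itself indicates: the paper simply states that the result is obtained ``by combining the last two theorems,'' namely Theorem~\ref{teor:1000} and Theorem~\ref{todo:0}, and you have written out precisely that combination. Your bookkeeping of the negation of $(b3)$ into the disjunction $(C3)\vee(C4)$ is accurate, and both implications are handled just as intended.
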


\begin{rem} $($The nonconvexity  of the joint-range set: a complete description$)$
If $F(\R^{n})$ is not convex then Theorems \ref{todo:0} and \ref{teor:1000}, and
its proof, imply the existence of
$d=(d_{1},\, d_{2})\neq0$, a change of variable $x=Cy-\overline{x}$ and
$k\in\R^{2}$ such that for all $x\in\R^{n}$, one has $F(x)=
\widetilde{F}(y)-k$ with $\widetilde{F}_{H}(y)=
\left(\underset{i=1}{\overset{m}{\sum}}y_{i}^{2}-y_{m+1}^{2}\right)d$
and  $\widetilde{F}_{L}\left(y\right)=\left(t_{1}y_{1}+t_{2}y_{m+1}\right)d_{\bot}$,
where $m$ may be possibly zero; moreover, there it holds
\begin{equation}\label{eq:1000}
y_{m+1}^{2}=1+\underset{i=1}{\overset{m}{\sum}}y_{i}^{2}\implies
t_{1}y_{1}+t_{2}y_{m+1}\neq0.
\end{equation}

In particular from \eqref{eq:1000} it follows $($using $y_{m+1}=1$ and
$y_{i}=0,\, i\neq m+1)$ that $t_{2}\neq0$. Furthermore, if
$t_{1}^{2}>t_{2}^{2}$, setting $t_{3}\doteq\sqrt{t_{1}^{2}-t_{2}^{2}}>0$,
the vector $y$ whose components are $y_{1}=\cfrac{t_{2}}{t_{3}}$,
$y_{m+1}=-\cfrac{t_{1}}{t_{3}}$ and  $y_{i}=0,\, i\neq1,\, m+1$, yields a
contradiction with \eqref{eq:1000}; proving that
$t_{1}^{2}\leq t_{2}^{2}\neq0$.
Thus, two possibilities arise:\\
$\bullet$ $t_{1}^{2}=t_{2}^{2}$, in which case, two sets come out as shown in
Figures \ref{nl:M2} and \ref{nl:M3}, up to translations and/or rotations.
Consider $l\doteq{\rm dim}({\rm ker}~A\cap {\rm ker}~B)$.

\begin{figure}[h]
     \begin{multicols}{2}
\begin{pspicture}(-3,-1)(1,4.5)
 \psset{yunit=1.5cm,xunit=1.5cm}
    \psaxes[ticks=none,labels=none,linestyle=dashed,linecolor=black,linewidth=1pt]{->}(0,0)(-2,-2)(2,2)
\pspolygon[linestyle=none,fillstyle=solid,fillcolor=blue!22!]
 (-1.5,-1.5)(1.5,-1.5)(1.5,1.5)(-1.5,1.5)(-1.5,-1.5)
 \psline[linecolor=white,linewidth=0.5pt](-1.5,0)(1.5,0)
\psdots[linecolor=blue,dotstyle=*,dotscale=1.2](0,0)
\psline[linecolor=black,linewidth=0.5pt]{->}(0.48,0.48)(1.2,0.48)
\psline[linecolor=black,linewidth=0.5pt]{->}(-0.48,0.48)(-1.2,0.48)
   \rput{0}(2.1,0){\color{red}{$u$}}
    \rput{0}(0,2.1){\color{red}{$v$}}
     \rput{0}(0.8,0.7){$d$}
     \rput{0}(-0.8,0.7){$d$}
    \rput{0}(0.15,-0.15){$k$}
    \end{pspicture}
    \vspace{2cm}
 \caption{$n-l\geq 2$}
 \label{nl:M2}
     \begin{pspicture}(-4,-1)(1,4.5)
 \psset{yunit=1.5cm,xunit=1.5cm}
    \psaxes[ticks=none,labels=none,linestyle=dashed,linecolor=black,linewidth=1pt]{->}(0,0)(-2,-2)(2,2)
\pspolygon[linestyle=none,fillstyle=solid,fillcolor=blue!22!]
 (-1.5,-1.5)(1.5,-1.5)(1.5,1.5)(-1.5,1.5)(-1.5,-1.5)
 \psline[linecolor=white,linewidth=0.5pt](-1.5,0)(0,0)
\psdots[linecolor=blue,dotstyle=*,dotscale=1.2](0,0)
\psline[linewidth=0.5pt,linecolor=blue](0,0)(1.5,0)
\psline[linecolor=black,linewidth=0.5pt]{->}(0.48,0.48)(1.2,0.48)
   \rput{0}(2.1,0){\color{red}{$u$}}
    \rput{0}(0,2.1){\color{red}{$v$}}
     \rput{0}(0.8,0.7){$d$}
    \rput{0}(0.15,-0.15){$k$}
    \end{pspicture}
    \vspace{2cm}
 \caption{$n-l\geq 3$}
 \label{nl:M3}
\end{multicols}
\end{figure}

\begin{figure}[Ht]
 \begin{multicols}{2}
 \begin{pspicture}(-3,-1)(1,4.5)
     \psset{yunit=1.5cm,xunit=1.5cm}
    \psaxes[linestyle=dashed,labels=none,ticks=none,linecolor=black,linewidth=0.5pt]{<->}(0,0)(-2,-2)(2,2)
    \psplot[linecolor=blue,linewidth=0.5pt]{-1.5}{0}{x neg sqrt }
  \psplot[linecolor=blue,linewidth=0.5pt]{-1.5}{0}{x neg sqrt neg}
  \psline[linecolor=black,linewidth=0.5pt]{->}(0.48,0.48)(1.2,0.48)
   \rput{0}(2.1,0){\color{red}{$u$}}
    \rput{0}(0,2.1){\color{red}{$v$}}
   \rput{0}(0.8,0.7){$d$}
    \rput{0}(0.15,-0.15){$k$}
    \end{pspicture}
   \vspace{2cm}
 \caption{$n-l=1$}
\label{nl=1}
  \begin{pspicture}(-4,-1)(1,4.5)
 \psset{yunit=1.5cm,xunit=1.5cm}
\psaxes[linestyle=dashed,labels=none,ticks=none,linecolor=black,linewidth=0.5pt]{<->}(0,0)(-2,-2)(2,2)
   \pscustom[linestyle=none,fillstyle=solid,fillcolor=blue!22!]{
\psplot{-1.5}{0}{x neg sqrt}
\psplot[linestyle=none]{0}{-1.5}{1.5}}
   \pscustom[linestyle=none,fillstyle=solid,fillcolor=blue!22!]{
\psplot{-1.5}{0}{x neg sqrt neg}
\psplot{0}{-1.5}{-1.5}}
\pspolygon[linestyle=none,fillstyle=solid,fillcolor=blue!22!](-0.001,1.5)(1.5,1.5)
(1.5,-1.5)(-0.001,-1.5)(0.001,1.5)
    \psplot[linecolor=blue,linewidth=0.5pt]{-1.5}{0}{x neg sqrt }
  \psplot[linecolor=blue,linewidth=0.5pt]{-1.5}{0}{x neg sqrt neg}
  \psline[linecolor=black,linewidth=0.5pt]{->}(0.48,0.48)(1.2,0.48)
   \rput{0}(0.8,0.7){$d$}
    \rput{0}(0.15,-0.15){$k$}
   \rput{0}(2.1,0){\color{red}{$u$}}
    \rput{0}(0,2.1){\color{red}{$v$}}
    \end{pspicture}
   \vspace{2cm}
 \caption{$n-l\geq 2$}
\label{nlMM2}
\end{multicols}
\end{figure}

\vskip 0.1truecm
\par\noindent
$\bullet$ $t_{1}^{2}<t_{2}^{2}$, in which case, we may assume $t_{1}=0$ up
to the change of variable $y_{1}'=\cfrac{t_{2}}{t_{3}}y_{1}+
\cfrac{t_{1}}{t_{3}}y_{m+1}$, $y_{m+1}'=\cfrac{t_{1}}{t_{3}}y_{1}+
\cfrac{t_{2}}{t_{3}}y_{m+1}$, $t_{3}=\sqrt{t_{2}^{2}-t_{1}^{2}}$; thus the set
may be have two possible forms as well, see Figures \ref{nl=1} and \ref{nlMM2},
up to translations and/or rotations.
\end{rem}

From the previous description, we immediately obtain $(a)$ of the next theorem.

\begin{thm}\label{dines:00}
Let $f, g$ be any quadratic functions as above. Then,
\begin{itemize}
\item[$(a)$] $F(\R^n)+\R_{++}d$ is convex for all non-null directions $d$ except
possibly at most two.
\item[$(b)$] $F(\R^n)+P$ is convex  for all convex cone with nonempty interior
$P\subseteq\R^2$. Consequently $F(\R^n)+{\rm int}~P$ is also convex.
\end{itemize}
\end{thm}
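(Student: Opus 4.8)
The plan is to reduce both parts to the characterization of the nonconvex directions obtained in Theorem~\ref{teor:1000}, after first upgrading its closed-ray statement to open rays. The preliminary passage I would record is: for any $d\neq0$, \emph{if $F(\R^n)+\R_+ d$ is convex then $F(\R^n)+\R_{++}d$ is convex}. To see this, take $p=a+sd$ and $q=b+td$ with $a,b\in F(\R^n)$, $s,t>0$, and $0<\lambda<1$. For every $0<\varepsilon\leq\min\{s,t\}$ the points $p-\varepsilon d$ and $q-\varepsilon d$ belong to $F(\R^n)+\R_+ d$, so by convexity $\lambda p+(1-\lambda)q-\varepsilon d=c+rd$ for some $c\in F(\R^n)$, $r\geq0$; hence $\lambda p+(1-\lambda)q=c+(r+\varepsilon)d\in F(\R^n)+\R_{++}d$, which proves the implication.

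\emph{Part $(a)$.} By Theorem~\ref{teor:1000}, nonconvexity of $F(\R^n)+\R_+ d$ forces condition $(b2)$, namely $d_2A=d_1B$. The set of $d\in\R^2$ satisfying this linear identity is a subspace: it is $\{0\}$ when $\{A,B\}$ is linearly independent, and a single line $\R d_0$ otherwise (if $A=B=0$ the identity holds for every $d$, but then $F$ is affine and $F(\R^n)$ is convex, so no direction is bad). Thus the closed rays $\R_+ d$ for which $F(\R^n)+\R_+ d$ is nonconvex are among the two rays $\R_+ d_0$ and $-\R_+ d_0$. By the preliminary passage, the directions for which $F(\R^n)+\R_{++}d$ is nonconvex are contained in the same set, so there are at most two of them.

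\emph{Part $(b)$.} Since ${\rm int}~P\neq\emptyset$, the set ${\rm int}~P$ is a nonempty open subset of $\R^2$ and hence is not contained in any line; as the bad directions of part $(a)$ lie on the single line $\R d_0$, I may choose $d\in{\rm int}~P$ off that line, for which $(b2)$ fails and therefore $F(\R^n)+\R_+ d$ is convex by Theorem~\ref{teor:1000}. Because $d\in P$ and $P$ is a convex cone, $\R_+ d\subseteq P$ and so $P+\R_+ d=P$. Consequently
$$F(\R^n)+P=F(\R^n)+\bigl(\R_+ d+P\bigr)=\bigl(F(\R^n)+\R_+ d\bigr)+P,$$
a Minkowski sum of two convex sets, hence convex. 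For the final assertion the same $d$ satisfies ${\rm int}~P+\R_+ d={\rm int}~P$ (indeed $td\in{\rm int}~P$ for $t>0$ and ${\rm int}~P+{\rm int}~P\subseteq{\rm int}~P$), so $F(\R^n)+{\rm int}~P=\bigl(F(\R^n)+\R_+ d\bigr)+{\rm int}~P$ is again a sum of two convex sets. The genuine content is already packaged in Theorem~\ref{teor:1000}; the only delicate points are the open-/closed-ray passage above and the observation that the absorbing identity $P+\R_+ d=P$ converts $F(\R^n)+P$ into an honest Minkowski sum of convex sets, which is exactly what makes the argument short.
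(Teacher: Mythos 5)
Your proposal is correct and follows essentially the same route as the paper: both parts reduce to Theorem \ref{teor:1000} (nonconvexity of $F(\R^n)+\R_+d$ forces $d_2A=d_1B$, so the bad directions lie on a single line through the origin), and part $(b)$ uses the absorbing identity $P+\R_+d=P$ to exhibit $F(\R^n)+P$ as a Minkowski sum of convex sets, exactly as in the paper. Your $\varepsilon$-argument for passing from $\R_+d$ to $\R_{++}d$ is fine but can be compressed to the paper's one-line identity $F(\R^n)+\R_{++}d=(F(\R^n)+\R_{+}d)+\R_{++}d$, again a sum of two convex sets.
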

\begin{proof} $(a)$: It is a consequence of the following equalities:
$$F(\R^n)+\R_{++}d=F(\R^n)+(\R_{+}d+\R_{++}d)=(F(\R^n)+\R_{+}d)+\R_{++}d.$$
$(b)$: Since ${\rm int}~P\not=\emptyset$, we can choose $d\in P$ such that
$F(\R^n)+\R_+d$ is convex. The result follows by noting that
$$F(\R^n)+P=F(\R^n)+(\R_{+}d+P)=(F(\R^n)+\R_{+}d)+P.$$
Thus, ${\rm int}(F(\R^n)+P)=F(\R^n)+{\rm int}~P$ is also convex.
\end{proof}

\begin{thm}\label{di:alter}
Let $d\in\R^2$, $d\not=0$. Then either $d=(d_1,d_2)\not\in-{\rm bd}~F_H(\R^n)$
or $d_2A-d_1B$ is semidefinite $($positive or negative$)$.
\end{thm}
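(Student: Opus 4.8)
The plan is to convert the semidefiniteness of $d_2A-d_1B$ into a geometric statement about the convex cone $F_H(\R^n)$ and then invoke a supporting line. The starting point is the identity, valid for every $x\in\R^n$,
\begin{equation*}
\langle(d_2A-d_1B)x,x\rangle=d_2\langle Ax,x\rangle-d_1\langle Bx,x\rangle=-\langle d_\bot,F_H(x)\rangle,
\end{equation*}
where $d_\bot=(-d_2,d_1)$; note $\langle d_\bot,d\rangle=0$, so the line $\R d$ equals $\{y:\langle d_\bot,y\rangle=0\}$. Consequently $d_2A-d_1B$ is semidefinite (positive or negative) if and only if $\langle d_\bot,F_H(x)\rangle$ keeps a constant sign for all $x$, equivalently, if and only if $F_H(\R^n)$ is contained in one of the two closed halfplanes determined by the line $\R d$.

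I would prove the statement by contraposition, so assume $d\in-{\rm bd}~F_H(\R^n)$, that is $-d\in{\rm bd}~F_H(\R^n)$, and exhibit such a halfplane. Since the boundary is nonempty we have $F_H(\R^n)\neq\R^2$; moreover, by Dines' theorem (Theorem \ref{din1941}) $F_H(\R^n)$ is a convex cone. It is convenient to pass to $K\doteq\overline{F_H(\R^n)}$, a closed convex cone sharing the interior, hence the boundary, of $F_H(\R^n)$ (a convex set and its closure have the same interior in $\R^2$); thus $-d\in{\rm bd}~K$, $0\in K$ and $-d\in K$, while $K\neq\R^2$.

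The crucial step is the supporting hyperplane theorem at the boundary point $-d$ of the closed convex set $K$: there is $\nu\neq0$ with $\langle\nu,y\rangle\le\langle\nu,-d\rangle$ for all $y\in K$. Because $K$ is a cone, for $y\in K$ and $t>0$ we have $ty\in K$, whence $t\langle\nu,y\rangle\le\langle\nu,-d\rangle$ for every $t>0$; letting $t\to+\infty$ forces $\langle\nu,y\rangle\le0$. Thus $K\subseteq\{y:\langle\nu,y\rangle\le0\}$, a halfplane through the origin. To locate the normal I then use the endpoints: $0\in K$ yields $\langle\nu,-d\rangle\ge0$, while $-d\in K$ yields $\langle\nu,-d\rangle\le0$; hence $\langle\nu,d\rangle=0$, so $\nu\in\R d_\bot\setminus\{0\}$. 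Therefore $\langle d_\bot,F_H(x)\rangle$ has constant sign on $\R^n$, and by the opening identity $d_2A-d_1B$ is semidefinite, which is the desired conclusion.

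The step I expect to require the most care is the degenerate situation ${\rm int}~F_H(\R^n)=\emptyset$: there the convex cone $F_H(\R^n)$ lies on a line through the origin, this line must coincide with $\R d$ once $-d\in{\rm bd}~F_H(\R^n)$, so that $\langle d_\bot,F_H(\cdot)\rangle\equiv0$ and $d_2A-d_1B=0$ trivially. Handling this case either directly, as just indicated, or by applying the supporting-hyperplane theorem in its general (relative-interior) form, together with the verification that a convex set and its closure have the same boundary, are the only technical points; the rest is the routine cone-homogeneity argument above.
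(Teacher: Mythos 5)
Your proof is correct. Both you and the paper reduce the statement, via the identity $\langle(d_2A-d_1B)x,x\rangle=-\langle d_\perp,F_H(x)\rangle$, to the geometry of the convex cone $F_H(\R^n)$ (Dines) relative to the line $\R d$; but you take the opposite contrapositive and complete the geometric step differently. The paper assumes $d_2A-d_1B$ is \emph{not} semidefinite, i.e.\ the cone meets both open half-planes bounded by $\R d$, and then merely asserts (``it is not difficult to check'') that $-d$ must lie either in ${\rm int}~F_H(\R^n)$ or outside $\overline{F_H(\R^n)}$ --- a claim that still needs a small case analysis on where the segment joining the two witnesses crosses $\R d$. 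You instead assume $-d\in{\rm bd}~F_H(\R^n)$ and invoke the supporting hyperplane theorem at $-d$ for the closed convex cone $\overline{F_H(\R^n)}$; the homogeneity argument forces the supporting line through the origin, and the two inequalities from $0$ and $-d$ pin the normal to $\R d_\perp$, giving the constant sign of $\langle d_\perp,F_H(\cdot)\rangle$ directly. Your route is more self-contained (every step is a standard fact: ${\rm int}~C={\rm int}~\overline{C}$ and hence ${\rm bd}~C={\rm bd}~\overline{C}$ for convex $C$, support at a boundary point, conic homogeneity), and your separate treatment of the empty-interior case correctly disposes of the only degenerate situation; the paper's version is shorter but leaves its key geometric assertion to the reader.
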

\begin{proof}
Assume that $d_2A-d_1B$ is not semidefinite, that is, there exist
$x_1, x_2\in\R^n$ such that $\langle d_\perp, F_H(x_1)\rangle<0<
\langle d_\perp, F_H(x_2)\rangle$. Then, it is not difficult to check that
either $-d\in{\rm int}~F_H(\R^n)$ or $-d\not\in \overline{F_H(\R^n)}$, which
mean $-d\not\in{\rm bd}~F_H(\R^n)$.
\end{proof}


\section{Nonconvex quadratic programming with one single inequality or equality
constraint} \label{secc5}
In this section we are concerned with the following quadratic minimization 
problem:
\begin{equation}\label{probpq_min0}
\mu\doteq {\rm inf}\{ f(x):~~g(x)\in -P,~x \in \R^n\},
\end{equation}
where $P$ is either $\R_+$ or $\{0\}$, and $f, g:\R^n\to \R$ are any quadratic 
functions given by
\begin{equation}\label{qua:0}
f(x)\doteq\dfrac{1}{2} \langle Ax,x\rangle+\langle a,x\rangle+k_1;~~
g(x)\doteq\dfrac{1}{2} \langle Bx,x\rangle+\langle b, x\rangle+k_2,
\end{equation}
with $A, B\in{\mathcal S}^n$, $a,~b\in \R^n$ and $k_1, k_2\in\R$.\\
The (Lagrangian) dual problem associated to \eqref{probpq_min0} is defined by
\begin{equation}\label{probdq_min0}
\nu\doteq\sup_{\lambda\in P^*}\underset{x\in\R^n}{\rm inf}
\{ f(x)+\lambda g(x)\}.
\end{equation}
Clearly we obtain
\begin{equation}\label{g:d}
\underset{x\in C}{\rm inf}\{ f(x)+\lambda g(x)\}\leq \mu,~~\forall~\lambda\in P^*.
\end{equation}
It is said that \eqref{probpq_min0} has the strong duality property, or simply
that strong duality holds for \eqref{probpq_min0}, if $\mu=\nu$ and
problem \eqref{probdq_min0} admits any solution.\\
Thus, in case $\mu=-\infty$, there is no duality gap since $\nu=-\infty$ as well, 
and from \eqref{g:d}, we conclude that any element in $P^*$ is a solution for the 
problem \eqref{probdq_min0}. Hence, strong duality always holds for 
\eqref{probpq_min0} provided $\mu=-\infty$.

Setting  $F\doteq (f,g)$, $\mu\in\R$ means 
\begin{equation}\label{id:01}
[F(\R^n)-\mu(1,0)]\cap-(\R_{++}\times P)=\emptyset,
\end{equation}
or equivalently,
\begin{equation}\label{id:03}
[F(\R^n)+(\R_+\times P)-\mu(1,0)]\cap-(\R_{++}\times\{0\})=\emptyset.
\end{equation}
Hence, in case of one inequality constraint, i. e., $P=\R_+$, \eqref{id:03} becomes
$$[F(\R^n)+\R_+^2-\mu(1,0)]\cap-(\R_{++}\times\{0\})=\emptyset;$$
whereas in case $P=\{0\}$, that is, under one single equality constraint, 
\eqref{id:03} reduces to   
$$[F(\R^n)+\R_+(1,0)-\mu(1,0)]\cap-(\R_{++}\times\{0\})=\emptyset.$$

Thus, we are interested only in the convexity of $F(\R^n)+\R_+(1,0)$ since 
$F(\R^n)+\R^2$ is always convex by Theorem \ref{dines:00}.

By particularizing $d=(1,0)$ in Theorem  \ref{teor:1000}, it yields the following
corollary.

\begin{cor} \label{coro:scon0}
Let $f, g$ be quadratic functions as in \eqref{qua:0}.
Then,
\begin{itemize}
\item[$(a)$] $F(\R^n)+\R_+(1,0)$ is  nonconvex if and only if
$$B=0,~\{a,b\}\subseteq({\rm ker}~A)^\perp=A(\R^n),~
\{u\in\R^n:~\langle Au,u\rangle<0\}\not=\emptyset,~~{\it and}$$
\begin{equation}\label{bubu}
\langle Au,u\rangle<0\Longrightarrow \langle b,u\rangle\not=0;
\end{equation}
\item[$(b)$] if $F(\R^n)+\R_+(1,0)$ is  nonconvex, one obtains
\begin{enumerate}
\item[$(b1)$] $\not\exists~(\lambda,\rho)\in\R^2$, $f(x)+\lambda g(x)\geq\rho $,
$\forall~x\in\R^n$ and therefore
$$\displaystyle{\inf_{x\in\R^n}}[f(x)+\lambda g(x)]=-\infty,~\forall~\lambda\in\R;$$
\item[$(b2)$] $\exists~x_i\in\R^n$, $i=1, 2$, $g(x_1)<0<g(x_2).$
\end{enumerate}
\end{itemize}
\end{cor}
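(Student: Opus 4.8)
The plan is to obtain both parts as consequences of Theorem \ref{teor:1000} specialized to the direction $d=(1,0)$. First I would reconcile the notation: the functions in \eqref{qua:0} carry a factor $\tfrac12$ on the quadratic part and additive constants $k_1,k_2$, whereas Theorem \ref{teor:1000} is stated for $f(x)=\langle Ax,x\rangle+\langle a,x\rangle$, $g(x)=\langle Bx,x\rangle+\langle b,x\rangle$. Since $(k_1,k_2)$ merely translates $F(\R^n)$, it does not affect the (non)convexity of $F(\R^n)+\R_+(1,0)$; and replacing $A,B$ by $\tfrac12 A,\tfrac12 B$ leaves ${\rm ker}~A$, ${\rm ker}~B$, $F_L$ and the sign of $\langle Au,u\rangle$ unchanged, scaling only the homogeneous part $F_H$ by a positive constant. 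Thus Theorem \ref{teor:1000} applies once these harmless modifications are recorded.

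Next I would translate the three conditions $(b1)$--$(b3)$ of Theorem \ref{teor:1000} at $d=(1,0)$. Condition $(b2)$, namely $d_2A=d_1B$, becomes $B=0$. With $B=0$ we have ${\rm ker}~A\cap{\rm ker}~B={\rm ker}~A$, so $(b1)$, i.e. $F_L({\rm ker}~A)=\{0\}$, says exactly $\langle a,u\rangle=\langle b,u\rangle=0$ for every $u\in{\rm ker}~A$, that is $a,b\in({\rm ker}~A)^\perp$; and since $A$ is symmetric, $({\rm ker}~A)^\perp=A(\R^n)$, which is the displayed inclusion. It then remains to unwind $(b3)$.

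For $(b3)$ I would note that, with $B=0$, the set $F_H^{-1}(-d)=F_H^{-1}(-1,0)$ is $\{u:\tfrac12\langle Au,u\rangle=-1\}=\{u:\langle Au,u\rangle=-2\}$, while the linear independence of $\{(1,0),(\langle a,u\rangle,\langle b,u\rangle)\}$ is equivalent to $\langle b,u\rangle\neq0$. A homogeneity argument then converts the equality constraint into the open one: $\{u:\langle Au,u\rangle=-2\}\neq\emptyset$ if and only if $\{u:\langle Au,u\rangle<0\}\neq\emptyset$, and for $u$ with $\langle Au,u\rangle<0$ the rescaled vector $tu$ with $t=\sqrt{2/|\langle Au,u\rangle|}$ satisfies $\langle A(tu),tu\rangle=-2$ and $\langle b,tu\rangle=t\langle b,u\rangle$, so the implication ``$\langle Au,u\rangle=-2\Rightarrow\langle b,u\rangle\neq0$'' holds if and only if ``$\langle Au,u\rangle<0\Rightarrow\langle b,u\rangle\neq0$'' holds. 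This yields precisely the characterization in $(a)$, including $\{u:\langle Au,u\rangle<0\}\neq\emptyset$ and \eqref{bubu}.

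Finally, part $(b)$ follows by reading off structural consequences of $(a)$. Nonemptiness of $\{u:\langle Au,u\rangle<0\}$ shows $A\not\succcurlyeq0$, so $A$ has a direction of negative curvature; since $B=0$, $f(x)+\lambda g(x)=\tfrac12\langle Ax,x\rangle+\langle a+\lambda b,x\rangle+(k_1+\lambda k_2)$, and evaluating along $x=tu$ with $\langle Au,u\rangle<0$ sends this to $-\infty$ as $t\to+\infty$, giving $(b1)$. For $(b2)$, pick $u$ with $\langle Au,u\rangle<0$; by \eqref{bubu}, $\langle b,u\rangle\neq0$, so $b\neq0$, hence the affine function $g(x)=\langle b,x\rangle+k_2$ is onto $\R$ and attains both signs, producing $x_1,x_2$ with $g(x_1)<0<g(x_2)$. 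The only genuinely delicate point is the bookkeeping in the first three paragraphs---keeping the $\tfrac12$ factor and the equality-to-inequality rescaling straight---after which everything reduces to elementary facts about affine functions and indefinite quadratics.
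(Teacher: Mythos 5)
Your proposal is correct and follows essentially the same route as the paper: part $(a)$ is obtained by specializing Theorem \ref{teor:1000} to $d=(1,0)$ (your careful bookkeeping of the $\tfrac12$ factor, the constants $k_1,k_2$, and the rescaling from $\langle Au,u\rangle=-2$ to $\langle Au,u\rangle<0$ just makes explicit what the paper leaves implicit), and part $(b)$ is read off from the indefiniteness of $A$ and the surjectivity of the affine function $g$. No gaps.
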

\begin{proof} $(a)$ is a consequence from Theorem \ref{teor:1000} with $d=(1,0)$.\\
Assume now that $(b1)$ does not hold, then $A\succcurlyeq 0$, contradicting $(a)$.
Then, the second part immediately follows.\\
$(b2)$ It follows from \eqref{bubu}.
\end{proof}

\begin{rem} \label{obs00} As a counterpart to the preceding corollary, we deduce 
that $F(\R^n)+\R_+(1,0)$ is convex if, and  only if any of the following conditions 
is satisfied:
\begin{itemize}
\item[$(C1)$] $B=0$ and $\exists~u\in{\rm ker}~A$:
$(\langle a,u\rangle,\langle b,u\rangle)\not=(0,0);$
\item[$(C2)$] $B\not=0;$
\item[$(C3)$] $B=0$ and $A\succcurlyeq 0;$
\item[$(C4)$] $B=0$ and $\exists~u\in\R^n$: $\langle Au,u\rangle<0$ and
$\langle b,u\rangle=0.$
\end{itemize}
The latter condition implies $\mu=-\infty$ $($with $P=\{0\})$ by Proposition 
\ref{an:b0}.
\end{rem}

\subsection{The nonstrict version of S-lemma (Finsler's theorem), a 
strong duality and optimality conditions revisited}\label{s:lema}

The validity of S-lemma with equality ($P=\{0\}$) is  characterized in Theorems 1 
and 3 in \cite{xws2015} by a completely different approach. Our purpose  is to 
provide some sufficient conditions for that validity as a consequence of our 
results from Section \ref{secc4}. These conditions will be  expressed in a 
different way than that in \cite{xws2015}. 

The case $P=\R_+$ already appears in \cite{yaku1971, yaku1977} known as the
$S$-procedure, see also \cite[Corollary 5]{sz2003},
\cite[Proposition 3.1]{lsz2004}, \cite[Theorem ~2.2]{polter2007},
\cite[Corollary 3.7]{jleeli2009}, and a slight variant in
\cite[Theorem 3.4]{ffb-carcamo2013}. Some extensions of the
$S$-procedure in a different direction may be found in \cite{derin2006}.

We now establish that sufficient conditions for the validity of S-lemma
for inhomogeneous quadratic functions. Set 
$$K_P\doteq\{x\in\R^n:~g(x)\in -P\}.$$
\begin{thm} $(S$-lemma$)$ \label{slema:p}
Let $P$ be either $\R_+$ or $\{0\}$, $K_P\not=\emptyset$ and
$f, g:\R^n\to \R$ be any quadratic functions as in \eqref{qua:0}, satisfying
$0\in{\rm ri}(g(\R^n)+P)$. In case $P=\{0\}$, assume additionally that
$g\not\equiv0$ and that
any of the conditions $(Ci)$, $i=1, 2, 3$, holds.
Then, $(a)$ and $(b)$ are  equivalent:
\begin{itemize}
\item[$(a)$] $x\in \R^n,~g(x)\in-P\Longrightarrow f(x)\geq 0$.
\item[$(b)$] There is $\lambda\in P^*$ such that
$f(x)+\lambda g(x)\geq 0,~~\forall~x\in \R^n.$
\end{itemize}
\end{thm}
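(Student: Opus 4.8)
The implication $(b)\Rightarrow(a)$ is immediate and I would dispatch it first: if $f(x)+\lambda g(x)\geq 0$ for all $x$ with $\lambda\in P^*$, then any $x$ satisfying $g(x)\in -P$ has $\lambda g(x)\leq 0$, whence $f(x)\geq-\lambda g(x)\geq 0$. The whole content is in $(a)\Rightarrow(b)$, and my plan is to run a hyperplane-separation argument on the convex set $C\doteq F(\R^n)+(\R_+\times P)$.

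The first thing to record is that $C$ is convex under the present hypotheses. When $P=\R_+$ this is Theorem \ref{dines:00}$(b)$, since $\R_+^2$ is a convex cone with nonempty interior; when $P=\{0\}$ we have $C=F(\R^n)+\R_+(1,0)$, and the assumed validity of one of $(C1)$, $(C2)$, $(C3)$ yields convexity through Corollary \ref{coro:scon0} (equivalently Remark \ref{obs00}). Next I would translate $(a)$ into a disjointness statement. Put $L\doteq\{(\rho,0):\rho<0\}$. If some point lay in $C\cap L$, it would read $(\rho,0)=F(x)+(t,p')$ with $\rho<0$, $t\geq 0$, $p'\in P$, forcing $g(x)=-p'\in -P$ and $f(x)=\rho-t<0$, in contradiction with $(a)$. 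Hence $C\cap L=\emptyset$.

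With two disjoint nonempty convex subsets of $\R^2$ in hand, a separation theorem supplies $p=(\lambda_0,\lambda)\neq 0$ with $\langle p,z\rangle\geq\langle p,w\rangle$ for all $z\in C$, $w\in L$. Pushing $w=(\rho,0)$ with $\rho\to-\infty$ forces $\lambda_0\geq 0$, and letting $\rho\to 0^-$ gives $\langle p,z\rangle\geq 0$ on $C$. Writing $z=F(x)+(t,p')$ and letting $t\to+\infty$ (resp., when $P=\R_+$, $p'\to+\infty$) confirms $\lambda_0\geq 0$ and produces $\lambda\in P^*$; the choice $t=0$, $p'=0$ then gives $\lambda_0 f(x)+\lambda g(x)\geq 0$ for every $x$.

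The hard part, and the only place the Slater-type hypothesis $0\in{\rm ri}(g(\R^n)+P)$ is genuinely used, is to exclude the degenerate multiplier $\lambda_0=0$. If $\lambda_0=0$ then $\lambda\neq 0$ and $\lambda g(x)\geq 0$ for all $x$, so $g$ has constant sign and the interval $g(\R^n)+P\subseteq\R$ lies in $\R_+$ or in $\R_-$; its relative interior is then an open interval missing $0$, except in the degenerate case $g(\R^n)+P=\{0\}$, which forces $g\equiv 0$ (ruled out when $P=\{0\}$, and impossible when $P=\R_+$). Either way we contradict $0\in{\rm ri}(g(\R^n)+P)$. Thus $\lambda_0>0$, and dividing by $\lambda_0$ yields $f(x)+(\lambda/\lambda_0)g(x)\geq 0$ for all $x$ with $\lambda/\lambda_0\in P^*$ (as $P^*$ is a cone), which is exactly $(b)$. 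I expect this normalization step to be the crux: the joint-range convexity only delivers a separating functional that may a priori be horizontal, and it is precisely the relative-interior condition, reinforced by $g\not\equiv0$ in the equality case, that makes it normalizable.
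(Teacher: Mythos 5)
Your proposal is correct and follows essentially the same route as the paper: convexity of $F(\R^n)+(\R_+\times P)$ via Theorem \ref{dines:00} and the conditions $(Ci)$, then separation of that set from the negative horizontal ray, with the Slater-type condition $0\in{\rm ri}(g(\R^n)+P)$ used to rule out a horizontal separating functional. Your only addition is that you spell out the normalization step (why $\lambda_0=0$ forces $g$ to have constant sign and hence contradicts the relative-interior hypothesis), which the paper dispatches in one line.
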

\begin{proof} Obviously $(b)\Longrightarrow(a)$ always holds.
Assume therefore that $(a)$ is satisfied. This means that
$0\leq \mu\doteq\underset{g(x)\in-P}{\inf}~f(x)$. It follows that 
\eqref{id:03} holds.
By our previous discussion $F(\R^n)+(\R_{+}\times P)$ is convex, and
so by a separation theorem, there exist
$(\gamma,\lambda)\in\R^2\setminus\{(0,0)\}$ and $\alpha\in\R$ such that
\begin{equation*}\label{d00}
\gamma(f(x)-\mu+p)+\lambda (g(x)+q)\geq \alpha\geq \gamma u,~
\forall~x\in\R^n,~\forall~p\in\R_+,~\forall~q\in P,~\forall~u<0.
\end{equation*}
This yields $\alpha\geq 0$,  $\gamma\geq 0$ and $\lambda\in P^*$, which imply
$\gamma(f(x)-\mu)+\lambda g(x)\geq 0,~~\forall~x\in\R^n,$
that is, $\gamma f(x)+\lambda g(x)\geq \gamma\mu\geq 0,~~\forall~x\in \R^n.$
The Slater-type condition gives $\gamma>0$, completing the proof of the
theorem.
\end{proof}



\begin{rem} \label{obs:slema}
$($Comparison with the S-lemma with equality given in
$\cite{sz2003, lsz2004, polter2007})$\\
Here, our discussion refers to $P=\{0\}$. The $S$-lemma in
$\cite[Corollary~ 6]{sz2003}$, see also $\cite[Proposition~3.2]{lsz2004}$, or
$\cite[Proposition~ 3.1]{polter2007}$, asserts that $(a)$
and $(b)$ are equivalent under the assumptions $(i)$ and $(ii):$\\
$(i)$ $g$ is strictly convex $($or strictly concave$)$ and \\
$(ii)$ there exist $x_i\in\R^n,~i=1, 2$ such that $g(x_1)<0<g(x_2)$.\\
We first observe that such a result cannot be applied to homogeneous quadratic
functions $($which only requires $(ii)$, see $\cite[Theorem~ 2.3]{more}$ or
$\cite{ham1999})$, as one can notice it directly. On the contrary, our $S$-lemma
recovers that result, since $(i)$ implies $(C3)$: indeed
$\langle Bu,u\rangle=0$ implies $u=0$, and so $F_H^{-1}(-1,0)=\emptyset$.
Secondly, it is easy to check that $(ii)$ is equivalent to:\\
$(ii')$ $0\in{\rm ri}~g(\R^n)$ and $g\not\equiv0$.\\
On the other hand, our Theorem \ref{slema:p} applies to Example
\ref{ej:s:lema} but Proposition 3.1 in $\cite{polter2007}$ does not, since
$g$ in this case is neither strictly convex nor strictly concave.
\end{rem}


A characterization of the validity of S-lemma, for fixed $g$ with $P=\R_+$, for
each quadratic function $f$, is given in \cite[Theorem 3.1]{jhl_optimeng2009}.

An immediate new result on strong duality, when $P=\{0\}$, arises from the
previous theorem.
\begin{thm}\label{moreg}
Let $P$ be either $\R_+$ or $\{0\}$; $f, g:\R^n\to \R$ be any quadratic functions, 
as above, satisfying $0\in{\rm ri}(g(\R^n)+P)$ with $\mu\in\R$. In case $P=\{0\}$, 
assume additionally that $g\not\equiv0$ and that any of the conditions $(Ci)$, 
$i=1, 2, 3$, holds.
Then, strong duality holds for the problem
\eqref{probpq_min0}, that is, there exists $\lambda^*\in P^*$ such that
\begin{equation}\label{sd0:00}
\inf_{g(x)\in-P}f(x)=\inf_{x\in\R^n}[f(x)+\lambda^* g(x)].
\end{equation}
\end{thm}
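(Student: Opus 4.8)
The plan is to derive strong duality as a direct corollary of the $S$-lemma (Theorem \ref{slema:p}), applied not to $f$ itself but to the shifted objective $\tilde f\doteq f-\mu$; this shift is legitimate precisely because $\mu\in\R$ by hypothesis. First I would note that, by the very definition of $\mu$ in \eqref{probpq_min0}, every feasible point satisfies $f(x)\geq\mu$, that is
\[
x\in\R^n,\ g(x)\in-P\ \Longrightarrow\ \tilde f(x)=f(x)-\mu\geq 0,
\]
so that assertion $(a)$ of Theorem \ref{slema:p} holds verbatim for the pair $(\tilde f,g)$.

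Next I would verify that all the standing hypotheses of Theorem \ref{slema:p} survive the replacement of $f$ by $\tilde f$. Passing from $f$ to $\tilde f$ only changes the constant term $k_1$ into $k_1-\mu$, while leaving $A$ and $a$ untouched and, of course, leaving all the data $B,b,k_2$ of $g$ unchanged. Consequently the Slater-type condition $0\in{\rm ri}(g(\R^n)+P)$, the requirement $g\not\equiv0$, and the relevant condition among $(Ci)$, $i=1,2,3$ (each of which involves only $A,B,a,b$ and never the constant term of $f$), all remain valid for $(\tilde f,g)$. Applying Theorem \ref{slema:p} therefore produces some $\lambda^{*}\in P^{*}$ with $\tilde f(x)+\lambda^{*}g(x)\geq 0$ for every $x$, i.e.
\[
f(x)+\lambda^{*}g(x)\geq\mu,\qquad\forall\,x\in\R^n,
\]
whence $\inf_{x\in\R^n}[f(x)+\lambda^{*}g(x)]\geq\mu$.

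Finally I would close the gap with weak duality. For any $\lambda\in P^{*}$ and any feasible $x$ one has $\lambda g(x)\leq 0$ (when $P=\R_+$, since then $g(x)\leq 0$ and $\lambda\geq 0$) or $\lambda g(x)=0$ (when $P=\{0\}$); in either case \eqref{g:d} gives $\inf_{x\in\R^n}[f(x)+\lambda g(x)]\leq\mu$ for every $\lambda\in P^{*}$. Specializing to $\lambda^{*}$ forces $\inf_{x\in\R^n}[f(x)+\lambda^{*}g(x)]=\mu$, which is exactly \eqref{sd0:00}; moreover this shows $\lambda^{*}$ attains the dual optimal value $\nu=\mu$, so strong duality holds. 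I do not expect a genuine obstacle: the only point requiring attention is the invariance of the conditions $(Ci)$ and of the Slater-type condition under the shift $f\mapsto f-\mu$, which is immediate since none of them sees the constant term. All the substantive work has already been absorbed into the proof of the $S$-lemma, where the Slater-type condition is what forces the objective multiplier to be strictly positive and thereby rules out a duality gap.
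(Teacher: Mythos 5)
Your proof is correct and follows essentially the same route as the paper: the paper's own argument is precisely to observe that $\mu\in\R$ rules out any $x$ with $f(x)-\mu<0$ and $g(x)\in-P$, and then to apply Theorem \ref{slema:p} to $f-\mu$. Your write-up merely makes explicit the two points the paper leaves implicit, namely that the hypotheses of the $S$-lemma are unaffected by shifting the constant term of $f$ and that weak duality closes the gap.
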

\begin{proof} From $\mu\in\R$, we infer that there is no $x\in \R^n$ such that
$f(x)-\mu<0,~g(x)\in-P$. Then, we apply Theorem \ref{slema:p} to conclude
with the proof.
\end{proof}

We single out the case $P=\{0\}$ to obtain a new characterization of the validity
of strong duality for inhomogenoeus quadratic functions under Slater-type condition.
Its proof follows from the previous theorem and Corollary \ref{coro:scon0}.

\begin{cor}\label{moregg}
Let $P=\{0\}$; $f, g:\R^n\to \R$ be as above satisfying
$g(x_1)<0<g(x_2)$ for some $x_1, x_2\in\R^n$.
Then, 
$$\mu\in\R~{\it and~ strong~ duality~ holds~ for}~\eqref{probpq_min0}~
\Longleftrightarrow~
\nu\in\R~{\it and}~F(\R^n)+\R_+(1,0)~{\it is~ convex}.$$
\end{cor}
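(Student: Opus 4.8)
The plan is to prove the two implications separately, using throughout that for $P=\{0\}$ one has $P^*=\R$, so the dual \eqref{probdq_min0} is the unconstrained supremum over $\lambda\in\R$ and the weak duality relation \eqref{g:d} reads simply $\nu\le\mu$. For the forward implication, suppose $\mu\in\R$ and strong duality holds. Since strong duality forces $\mu=\nu$, I immediately obtain $\nu=\mu\in\R$. To derive convexity I argue by contradiction: if $F(\R^n)+\R_+(1,0)$ were nonconvex, then part $(b1)$ of Corollary \ref{coro:scon0} gives $\inf_{x\in\R^n}[f(x)+\lambda g(x)]=-\infty$ for every $\lambda\in\R$; taking the supremum over $\lambda\in P^*=\R$ would yield $\nu=-\infty$, contradicting $\nu\in\R$. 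Hence $F(\R^n)+\R_+(1,0)$ must be convex, and this direction is essentially immediate once Corollary \ref{coro:scon0}$(b1)$ is invoked.

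For the reverse implication, suppose $\nu\in\R$ and $F(\R^n)+\R_+(1,0)$ is convex. First I establish $\mu\in\R$: weak duality gives $\mu\ge\nu>-\infty$, while the hypothesis $g(x_1)<0<g(x_2)$ together with the continuity of $g$ along the segment joining $x_1$ and $x_2$ produces, by the intermediate value theorem, a feasible point $x^*$ with $g(x^*)=0$, whence $\mu\le f(x^*)<+\infty$. Next, $g(x_1)<0<g(x_2)$ is exactly the condition $(ii)$ of Remark \ref{obs:slema}, hence equivalent to $(ii')$: $0\in{\rm ri}\,g(\R^n)$ and $g\not\equiv0$, which supplies the Slater-type requirement $0\in{\rm ri}(g(\R^n)+P)$ demanded by Theorem \ref{moreg}. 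It then remains to verify that one of $(C1),(C2),(C3)$ holds. By Remark \ref{obs00}, convexity of $F(\R^n)+\R_+(1,0)$ is equivalent to at least one of $(C1)$--$(C4)$ being satisfied; but $(C4)$ would force $\mu=-\infty$, and hence $\nu=-\infty$ by weak duality, contradicting $\nu\in\R$. Thus $(C4)$ is excluded and one of $(C1),(C2),(C3)$ must hold. Having assembled $\mu\in\R$, the Slater-type condition, $g\not\equiv0$, and one of $(Ci)$ with $i=1,2,3$, I apply Theorem \ref{moreg} to conclude that strong duality holds for \eqref{probpq_min0}.

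The only genuinely delicate point is the bookkeeping of the convexity conditions in the reverse direction: one must recognize that finiteness of $\nu$ is precisely what rules out the hidden-unboundedness case $(C4)$, thereby routing the argument into the remaining hypotheses of Theorem \ref{moreg}. Everything else is a direct assembly of the earlier results, together with the elementary observations that $P^*=\R$ and that the Slater-type hypothesis guarantees both feasibility (so $\mu<+\infty$) and the relative-interior condition via the equivalence $(ii)\Leftrightarrow(ii')$ of Remark \ref{obs:slema}.
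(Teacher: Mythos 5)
Your proof is correct and takes essentially the same route the paper intends: the forward implication via Corollary \ref{coro:scon0}$(b1)$ (nonconvexity forces $\nu=-\infty$), and the reverse implication by assembling the hypotheses of Theorem \ref{moreg}, with finiteness of $\nu$ used to exclude the case $(C4)$ of Remark \ref{obs00}. The paper's own proof is only the one-line remark that the result ``follows from the previous theorem and Corollary \ref{coro:scon0}''; your write-up supplies exactly that bookkeeping.
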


For the convexity of $F(\R^n)+\R_+(1,0)$, we refer to Remark \ref{obs00}.

In case we have strong duality with $\mu=-\infty$ it is possible that
$F(\R^n)+\R_+(1,0)$ may be nonconvex. The following example shows this fact.
\begin{ex} Take $f(x_1,,x_2)=x_1x_2$, $g(x_1,x_2)=x_1+1$. Then
$\mu=-\infty$. Moreover, since $(-1,2)=F(1,-1)$, $(-1,0)=F(-1,1)$ but 
$(-1,1)\not\in F(\R^n)+\R_+(1,0)$, the latter set is nonconvex.
\end{ex}

In connection to the previous result, we must point out that when 
$F(\R^n)+\R_+(1,0)$ is not convex, then $g(x_1)<0<g(x_2)$ for some 
$x_1, x_2\in\R^n$ and $\nu=-\infty$
by Corollary \ref{coro:scon0}.

Next example shows that a Slater-type condition is necessary.
\begin{ex} \label{ej:sinsd1}
Let us consider $f(x_1,x_2)=x_1+x_2$ and
$g(x_1,x_2)=(x_1+x_2)^2$. One  can deduce that
there is no duality gap. It is easy to get
$$F_H(\R^2)=\R_+(0,1),~~F(\R^2)=\{(u,v)\in\R^2:~v=u^2\},~~g(\R^2)=\R_+.$$
Thus $F_H^{-1}(-1,0)=\emptyset$ $($implying that
$F(\R^2)+\R_+(1,0)$ is convex$)$, but $0\not\in {\rm ri}(g(\R^2)+P)$.
Moreover, the strong duality does not hold, since for any $\lambda>0$, the
inequality
$$x_1+x_2+\lambda(x_1+x_2)^2\geq 0,~~\forall~(x_1,x_2)\in\R^2,$$
is impossible.
\end{ex}

Strong duality results (with $P=\R_+$) were also derived in
\cite[Theorem 3.2]{jhl_optimeng2009} and \cite[Theorem 3.2]{jeya-li2011}, with a
different perspective: in both papers it is  characterized the validity of such
a result for each quadratic function $f$.



By applying the previous corollary, we obtain a necessary and sufficient
optimality condition, which is an extension of
Theorem 3.2 in \cite{more}, where the assumption $B\not=0$ (which is our
condition $(C3)$) is imposed when $P=\{0\}$.
The case $P=\R_+$ was already considered in
\cite[Proposition 3.3]{jeya-rubi2007},
\cite[Theorem 3.4]{more}, \cite[Theorem 3.8]{jleeli2009},
\cite[Theorem 3.15]{ffb-carcamo2013}.

\begin{cor} \label{more0}
Let $P$ be either $\R_+$ or $\{0\}$, $K_P\not=\emptyset$ and
$f, g:\R^n\to \R$ be any quadratic functions, as above, satisfying
$0\in{\rm ri}(g(\R^n)+P)$. In case $P=\{0\}$, assume additionally that
$g\not\equiv0$ and that any of the conditions $(Ci)$, $i=1, 2, 3$, holds.
Then, the following assertions are equivalent:
\begin{itemize}
\item[$(a)$] $\bar x\in\underset{g(x)\in - P}{\rm argmin}~f;$
\item[$(b)$] $\exists~\lambda^*\in P^*$ such that
$\nabla f(\bar x)+\lambda^*\nabla g(\bar x)=0$ and
$A+\lambda^* B\succcurlyeq 0$.
\end{itemize}
\end{cor}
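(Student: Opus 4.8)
The plan is to read condition $(b)$ as the Karush--Kuhn--Tucker system for the quadratic program and to exploit that the Lagrangian $h(x)\doteq f(x)+\lambda^* g(x)$ is itself a quadratic function whose Hessian is exactly $A+\lambda^* B$. For the direction $(b)\Rightarrow(a)$, which I expect to be routine, I would first note that $A+\lambda^* B\succcurlyeq 0$ makes $h$ convex, so the stationarity condition $\nabla h(\bar x)=\nabla f(\bar x)+\lambda^*\nabla g(\bar x)=0$ forces the feasible point $\bar x\in K_P$ to be a global minimizer of $h$ over $\R^n$. Then, for any feasible $x$ (i.e.\ $g(x)\in-P$), the membership $\lambda^*\in P^*$ yields $\lambda^* g(x)\le 0$, whence $f(x)\ge h(x)\ge h(\bar x)$. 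When $P=\{0\}$, feasibility of $\bar x$ gives $g(\bar x)=0$, so $h(\bar x)=f(\bar x)$ and the inequality $f(x)\ge f(\bar x)$ follows at once; when $P=\R_+$ one closes the argument with complementary slackness, exactly as in the classical references cited for that case.

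For the substantive direction $(a)\Rightarrow(b)$, the key observation is that the standing hypotheses ($K_P\neq\emptyset$, the Slater-type condition $0\in{\rm ri}(g(\R^n)+P)$, and, for $P=\{0\}$, $g\not\equiv 0$ together with one of $(C1),(C2),(C3)$) are precisely those of Theorem~\ref{moreg}. First I would invoke that theorem to obtain $\lambda^*\in P^*$ realizing strong duality, i.e.\ $\mu=\inf_{x\in\R^n}[f(x)+\lambda^* g(x)]$, where $\mu=\inf_{g(x)\in-P}f(x)$. Since $(a)$ says $\bar x$ is feasible and optimal, $f(\bar x)=\mu$ and $\lambda^* g(\bar x)\le 0$; combining these with $f(\bar x)+\lambda^* g(\bar x)\ge \inf_x h=\mu=f(\bar x)$ forces $\lambda^* g(\bar x)=0$ (complementary slackness) and shows $h(\bar x)=\inf_x h$, so $\bar x$ is a global minimizer of the quadratic $h$.

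It then remains to convert global minimality of $h$ at $\bar x$ into the two assertions of $(b)$. Stationarity $\nabla f(\bar x)+\lambda^*\nabla g(\bar x)=\nabla h(\bar x)=0$ is the first-order condition. For the second, I would use the standard fact that a quadratic function attaining a finite global minimum must have a positive semidefinite Hessian: otherwise $h$ would be unbounded below along a direction of negative curvature, contradicting $h(\bar x)=\inf_x h$. This gives $A+\lambda^* B\succcurlyeq 0$, completing $(b)$. The main obstacle I anticipate is not any single computation but the careful bookkeeping needed to (i) verify that the present hypotheses genuinely match those of Theorem~\ref{moreg} in both cases $P=\R_+$ and $P=\{0\}$, and (ii) extract complementary slackness and the semidefiniteness of $A+\lambda^* B$ uniformly across the two cases; once strong duality is in hand, the remaining steps reduce to elementary properties of the minimizers of a single quadratic.
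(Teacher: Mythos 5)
Your overall route---invoke Theorem \ref{moreg} (equivalently, Corollary \ref{moregg}) to get strong duality, deduce complementary slackness and global minimality of the Lagrangian $h=f+\lambda^*g$ at $\bar x$, then read off stationarity and positive semidefiniteness of the Hessian $A+\lambda^*B$ from the fact that a quadratic with a finite global minimum has positive semidefinite Hessian---is precisely the ``standard reasoning by applying the previous corollary'' that the paper leaves unwritten, and your direction $(a)\Rightarrow(b)$ is complete (note that the factor $\tfrac12$ in \eqref{qua:0} is what makes the Hessian of $h$ equal to $A+\lambda^*B$ exactly).

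There is, however, a genuine gap in $(b)\Rightarrow(a)$ when $P=\R_+$: you propose to ``close the argument with complementary slackness,'' but $\lambda^*g(\bar x)=0$ is not among the conditions listed in $(b)$ and cannot be derived from them. Without it, the chain $f(x)\geq h(x)\geq h(\bar x)$ only yields $f(x)\geq f(\bar x)+\lambda^*g(\bar x)$, which is strictly weaker than $f(x)\geq f(\bar x)$ whenever $\lambda^*>0$ and $g(\bar x)<0$. Concretely, take $n=1$, $f(x)=x^2$, $g(x)=x-1$, $P=\R_+$, $\bar x=-\tfrac12$, $\lambda^*=1$: all standing hypotheses hold, $\nabla f(\bar x)+\lambda^*\nabla g(\bar x)=-1+1=0$ and $A+\lambda^*B=2\succcurlyeq0$, yet the actual minimizer over $\{g\leq0\}$ is $x=0\neq\bar x$. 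So for $P=\R_+$ condition $(b)$ must be augmented by $\lambda^*g(\bar x)=0$, as in Mor\'e's Theorem 3.4; this is a defect of the statement as printed that your proof inherits rather than repairs. For $P=\{0\}$ feasibility forces $g(\bar x)=0$, complementary slackness is automatic, and your argument is sound; and in the direction $(a)\Rightarrow(b)$ complementary slackness does indeed fall out of strong duality exactly as you describe.
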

\begin{proof} It follows a standard reasoning by applying the previous
corollary.
\end{proof}

The last corollary deserves to make some remarks.

\begin{rem} \label{obs:more0} We consider $P=\{0\}$.\\
$(i)$ Next example, taken from  $\cite{more}$, shows that our set of assumptions
$(Ci)$, $i=1, 2, 3$ is, in some sense, optimal. Consider
$$\min\{x_1^2-x_2^2:~x_2=0\}.$$
Then, $F_H(\R^2)= \R(1,0)$. Observe that $B=0$,
${\rm ker}~A=\{ (0,0)\}$, $F_L({\rm ker}~A)=\{(0,0)\}$
$F_H^{-1}(-1,0)\not=\emptyset$ and $\{(1,0), F_L(u)\}$ is LI for all
$u\in F_H^{-1}(-1,0)$. Hence $(C1)$, $(C2)$, $(C3)$ and $(C4)$ do not hold, in other
words, $F(\R^n)+\R_+(1,0)$ is nonconvex.
We easily see  that the KKT conditions is not satisfied for the optimal solution
$\bar x=(0,0)$.\\
$(ii)$  Our Corollary \ref{more0} applies to situations that are not covered by
Theorem 3.2 in $\cite{more}$.
In fact, let us consider $\min\{x_1^2:~x_2=0\}$. Then
$F_H(\R^2)=\R_+(1,0)$, which gives $F_H^{-1}(-1,0)=\emptyset$.
Thus our previous corollary is applicable, but not that in $\cite{more}$ since
$B=0$.
\end{rem}

For completeness we establish a characterization of solutions when
$P=\{0\}$ and the Slater-type condition: $0\in{\rm ri}~g(\R^n)$ and
$g\not\equiv 0$ (which is equivalent to $(ii)$ in Remark \ref{obs:slema}) fails.
We only consider $g(x)\geq 0$ for all $x\in\R^n$, the case $g(x)\leq 0$ for
all $x\in\R^n$ is similar. This implies that
\begin{equation}\label{rest:g0}
K_P=K_0=\{x\in\R^n:~g(x)=0\}=\underset{\R^n}{\rm argmin}~g,
\end{equation}
provided $K_P\not=\emptyset$. It is known that
\begin{equation}\label{eqv0}
\bar x\in \underset{\R^n}{\rm argmin}~g\Longleftrightarrow
[B\succcurlyeq 0~{\rm and}~B\bar x+b=0].
\end{equation}
This leads to the following corollary.

\begin{cor} \label{more:sins}
$($Slater condition fails$)$ Let $f$, $g$ be any quadratic functions
and $\bar x\in K_P$ with $P=\{0\}$. Assume that $g(x)\geq 0$ for all $x\in \R^n$.
Then $F(\R^n)+\R_+(1,0)$ is convex, and the following statements are equivalent:
\begin{itemize}
\item[$(a)$] $\bar x\in\underset{K_P}{\rm argmin}~f;$
\item[$(b)$] $B\succcurlyeq 0$, $A$ is positive semidefinite on
${\rm ker}~B$, and $\exists~v\in\R^n$ such that
$$A\bar x+a+Bv=0,~B\bar x+b=0.$$
\end{itemize}
\end{cor}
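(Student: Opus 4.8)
The plan is to establish the two assertions separately: first the convexity of $F(\R^n)+\R_+(1,0)$, and then the equivalence $(a)\Leftrightarrow(b)$ by reducing the constrained problem to minimization over an affine subspace. The strong-duality results of Theorem \ref{moreg} and Corollary \ref{moregg} are unavailable here (the Slater-type condition fails), so a direct argument is needed.

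For the convexity, I would first note that the hypothesis $g(x)\geq 0$ for all $x\in\R^n$ forces $B\succcurlyeq 0$: letting $t\to+\infty$ in $g(tu)=\tfrac{t^2}{2}\langle Bu,u\rangle+t\langle b,u\rangle+k_2\geq 0$ gives $\langle Bu,u\rangle\geq 0$ for every $u$. I then invoke Remark \ref{obs00}. If $B\neq 0$, condition $(C2)$ applies directly. If $B=0$, then a nonnegative affine function $g$ forces $b=0$; consequently either $A\succcurlyeq 0$, in which case $(C3)$ applies, or there is $u$ with $\langle Au,u\rangle<0$, and then $\langle b,u\rangle=0$ automatically, so $(C4)$ applies. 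In every case $F(\R^n)+\R_+(1,0)$ is convex.

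The key step for $(a)\Leftrightarrow(b)$ is to recognize that the feasible set is an affine subspace. Since $\bar x\in K_P$, relation \eqref{rest:g0} shows $\bar x$ minimizes $g$ over $\R^n$, whence \eqref{eqv0} yields $B\succcurlyeq 0$ and $B\bar x+b=0$; these are already two of the three pieces of $(b)$, so they require no further argument. Completing the square and using $g(\bar x)=0$ together with $B\bar x+b=0$ gives $g(x)=\tfrac12\langle B(x-\bar x),x-\bar x\rangle$, so that, because $B\succcurlyeq 0$, the constraint $g(x)=0$ is equivalent to $B(x-\bar x)=0$. Hence $K_P=\bar x+{\rm ker}~B$.

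Finally I would expand $f(\bar x+w)-f(\bar x)=\langle A\bar x+a,w\rangle+\tfrac12\langle Aw,w\rangle$ for $w\in{\rm ker}~B$, where $A\bar x+a=\nabla f(\bar x)$. Assertion $(a)$ asserts this quantity is nonnegative for all $w\in{\rm ker}~B$; replacing $w$ by $sw$ with $s\in\R$ turns it into a scalar quadratic in $s$, which is nonnegative for all $s$ exactly when its linear coefficient vanishes and its quadratic coefficient is nonnegative. Thus $(a)$ is equivalent to $\langle A\bar x+a,w\rangle=0$ and $\langle Aw,w\rangle\geq 0$ for all $w\in{\rm ker}~B$; the first says $A\bar x+a\in({\rm ker}~B)^\perp=B(\R^n)$, i.e. $A\bar x+a+Bv=0$ for some $v$, and the second says $A$ is positive semidefinite on ${\rm ker}~B$. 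Combined with the already established $B\succcurlyeq 0$ and $B\bar x+b=0$, this is precisely $(b)$. The only genuine obstacle is the affine-subspace reduction $K_P=\bar x+{\rm ker}~B$; once that is in hand, the remainder is the routine first- and second-order optimality computation on a subspace.
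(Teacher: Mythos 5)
Your proposal is correct and follows exactly the route the paper intends: the paper states this corollary without a written proof, but the displayed facts \eqref{rest:g0} and \eqref{eqv0} immediately preceding it are set up precisely for your reduction $K_P=\bar x+{\rm ker}~B$ followed by the standard first- and second-order conditions on an affine subspace, and the convexity claim via the case analysis in Remark \ref{obs00} (using that $g\geq 0$ forces $B\succcurlyeq 0$, and $b=0$ when $B=0$) is the natural intended argument. No gaps.
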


\subsection{The ND property and the minimization problem}

Next result describes some necessary conditions for having the optimal value of
problem \eqref{probpq_min0} to be finite.

\begin{prop} \label{an:b0}
Assume that $\mu$ is finite. The following assertions hold.
\begin{itemize}
\item[$(a)$] if $P=\R_+$ then
\begin{equation}\label{cond:gene}
v\not=0,~\langle Bv,v\rangle\leq 0~\Longrightarrow~
\langle Av,v\rangle\geq 0.
\end{equation}
\item[$(b)$]  if $P=\{0\}$ and there exists $v\in\R^n$ satisfying
$\langle Bv,v\rangle= 0$ and $\langle Av,v\rangle<0$, then $Bv=0$
and either
$$\langle b,v\rangle>0~{\rm and}~
f(x+tv)\to -\infty~~{\rm as}~|t|\to+\infty,~
~\forall~x\in\R^n,
$$
or

$$\langle b,v\rangle<0~{\rm and}~
f(x+tv)\to -\infty~~{\rm as}~|t|\to+\infty,
~~\forall~x\in\R^n.
$$
\end{itemize}
\end{prop}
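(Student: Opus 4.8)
The plan is to prove both parts by contradiction with the finiteness of $\mu$: whenever the asserted implication fails, I will exhibit a feasible curve along which $f\to-\infty$. The workhorse is the elementary expansion, valid for any $x$ and direction $v$, namely $f(x+tv)=f(x)+t(\langle Ax,v\rangle+\langle a,v\rangle)+\tfrac{t^2}{2}\langle Av,v\rangle$ and $g(x+tv)=g(x)+t(\langle Bx,v\rangle+\langle b,v\rangle)+\tfrac{t^2}{2}\langle Bv,v\rangle$. In particular $\langle Av,v\rangle<0$ alone forces $f(x+tv)\to-\infty$ as $|t|\to+\infty$ for every $x$, so the recession claim in $(b)$ will be automatic and the real content there is the algebraic conclusion $Bv=0$ and $\langle b,v\rangle\neq0$. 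Note also that $\mu\in\R$ guarantees a feasible point $x_0$ exists.

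For $(a)$ with $P=\R_+$, I fix a feasible $x_0$ (so $g(x_0)\le0$) and suppose, against \eqref{cond:gene}, that $\langle Bv,v\rangle\le0$ while $\langle Av,v\rangle<0$. If $\langle Bv,v\rangle<0$, then $g(x_0+tv)\to-\infty$, so $x_0+tv$ is feasible for all large $|t|$ and $f(x_0+tv)\to-\infty$. If $\langle Bv,v\rangle=0$, then $g(x_0+tv)=g(x_0)+tc$ is affine in $t$ with $c\doteq\langle Bx_0,v\rangle+\langle b,v\rangle$; choosing the sign of $t$ so that $tc\le0$ keeps $g(x_0+tv)\le g(x_0)\le0$, hence feasible, while $f(x_0+tv)\to-\infty$. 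In either case $\mu=-\infty$, a contradiction, which proves $(a)$.

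For $(b)$ with $P=\{0\}$, the easy half is that $Bv=0$ together with $\langle b,v\rangle=0$ would make $g(x+tv)=g(x)$ for all $t$ (using symmetry of $B$); starting from a feasible $x_0$ the entire line $x_0+\R v\subseteq K_0$ while $f\to-\infty$, again forcing $\mu=-\infty$. Hence, once $Bv=0$ is established, $\langle b,v\rangle\neq0$ follows. The crux—and the step I expect to be the main obstacle—is showing $Bv=0$. I would argue the contrapositive: assume $w\doteq Bv\neq0$ and build a feasible escaping curve. Since $\langle Bv,v\rangle=0$ gives $\langle w,v\rangle=0$, the set $\{v,w\}$ is LI, and I restrict to the plane $x_0+\mathrm{span}\{v,w\}$ through a feasible $x_0$. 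The feasibility equation $g(x_0+tv+sw)=0$ is a quadratic in $(t,s)$ with vanishing constant term $g(x_0)=0$, no $t^2$-term (as $\langle Bv,v\rangle=0$), and crucial cross-coefficient $\langle Bv,w\rangle=\|Bv\|^2=\|w\|^2>0$. Solving for $s$ in terms of $t$—as a quadratic when $\langle Bw,w\rangle\neq0$, or linear when $\langle Bw,w\rangle=0$—the nonvanishing $ts$-coefficient produces a branch $s(t)$ that remains bounded as $t\to+\infty$; along it the coefficient $\tfrac12\langle Av,v\rangle<0$ of $t^2$ dominates $f$, so $f\to-\infty$ on a feasible curve and $\mu=-\infty$, the desired contradiction. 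The delicate points to verify are that this bounded branch is real for all large $t$ (its discriminant is dominated by the positive term $(\|w\|^2 t)^2$) and that $s(t)$ stays bounded, so that the $ts$- and $s^2$-terms of $f$ are genuinely of lower order than $t^2$.
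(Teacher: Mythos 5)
Your proof is correct, but it follows a genuinely different route from the paper's, most visibly in part $(b)$. For $(a)$ the paper simply cites Proposition 3.6 of an earlier reference, whereas you give a short self-contained argument; your case split on $\langle Bv,v\rangle<0$ versus $\langle Bv,v\rangle=0$ (choosing the sign of $t$ so that the affine term keeps $g(x_0+tv)\le 0$) is exactly what is needed. For $(b)$ the paper stays on the one-dimensional line $x+\R v$: it observes that $f(x+tv)\to-\infty$ forces $g(x+tv)=g(x)+t\langle\nabla g(x),v\rangle\neq 0$ for large $|t|$ and then reads off $\langle\nabla g(x),v\rangle\neq 0$ for every $x$, whence the affine map $x\mapsto\langle x,Bv\rangle+\langle b,v\rangle$ has no zeros and so $Bv=0$, $\langle b,v\rangle\neq 0$. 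You instead prove $Bv=0$ by a two-dimensional construction in the plane $x_0+\operatorname{span}\{v,Bv\}$, solving $g=0$ for a bounded branch $s(t)$ thanks to the cross-coefficient $\langle Bv,Bv\rangle=\|Bv\|^2>0$, and letting the $t^2$-term of $f$ dominate. This buys something real: the line argument only excludes points with $g(x)=0$ \emph{and} $\langle\nabla g(x),v\rangle=0$ simultaneously (an affine function of $t$ that is constant and nonzero also satisfies $g(x+tv)\neq 0$ for large $|t|$), and $K_0\cap\{x:\langle x,Bv\rangle=-\langle b,v\rangle\}=\emptyset$ does not by itself force $Bv=0$ --- consider $g(x)=x_1x_2-1$ and $v=e_1$, where the hyperbola avoids the hyperplane $\{x_2=0\}$ although $Bv\neq 0$. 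In that example it is precisely a curved feasible branch along which $f\to-\infty$, i.e.\ exactly the kind of escaping curve your planar construction produces, that restores the conclusion. The two delicate points you flagged (reality of the discriminant for large $t$ and boundedness of $s(t)\to-\alpha/\|Bv\|^2$) do check out, so your argument is complete; the remaining steps (the recession of $f$ from $\langle Av,v\rangle<0$, and $\langle b,v\rangle\neq 0$ once $Bv=0$) agree with the paper.
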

\begin{proof} $(a)$ It is Proposition 3.6 in \cite{ffb-carcamo2013}.\\
$(b)$: We obtain, given any $x\in \R^n$,  $f(x+tv)\to-\infty$ for all
$|t|\to+\infty$. Then there exists $t_1>0$ such that
$g(x+tv)=g(x)+t\langle\nabla g(x), v\rangle\not= 0$ for all $|t|>t_1$. By
splitting both expressions, we obtain either
$$f(x+tv)\to -\infty~~{\rm as}~t\to+\infty,~
f(x-tv)\to -\infty~~{\rm as}~t\to+\infty,~{\rm and}~\langle\nabla g(x),v\rangle>0,
$$
or
$$f(x+tv)\to -\infty~~{\rm as}~t\to+\infty,
f(x-tv)\to -\infty~~{\rm as}~t\to+\infty,~{\rm and}~\langle\nabla g(x),v\rangle<0,
$$
from which the desired results follow.
\end{proof}

\begin{thm}\label{exis:q} Consider problem \eqref{probpq_min0} and let
$\mu\in\R$. Assume that
\begin{equation}\label{lscq:00}
F_H(v)=0\Longrightarrow v=0.
\end{equation}
\begin{itemize}
\item[$(a)$] If $P=\{0\}$ then every minimizing sequence is bounded, and so
 $\underset{g(x)=0}{\rm argmin}~f$ is nonempty and compact.
\item[$(b)$] If $P=\R_+$ then $\underset{g(x)\leq 0}{\rm argmin}~f$ is
nonempty. More precisely, every unbounded minimizing sequence $x_k\in K_P$
satisfying $\|x_k\|\to +\infty$, $\dfrac{x_k}{\|x_k\|}\to v$, yields the
existence of $\bar x\in\underset{\R^n}{\rm argmin}~f$ such that, for some
$t_0>0$,
\begin{equation}\label{pseudo:q}
\bar x+tv\in\underset{g(x)\leq 0}{\rm argmin}~f,~~\forall~|t|>t_0.
\end{equation}
Furthermore, $Av=0$ and $\langle a,v\rangle=0$.
\end{itemize}
\end{thm}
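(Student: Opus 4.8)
The plan is to handle both parts through an asymptotic analysis of minimizing sequences, exploiting that the standing hypothesis \eqref{lscq:00} is exactly the ND condition \eqref{nd:00}. For $(a)$ I would show that $\mu\in\R$ together with ND forces every minimizing sequence to be bounded. Let $x_k$ be a minimizing sequence, so $g(x_k)=0$ and $f(x_k)\to\mu$. If $\{x_k\}$ were unbounded, then after passing to a subsequence $\|x_k\|\to+\infty$ and $x_k/\|x_k\|\to v$ with $\|v\|=1$. Dividing $f(x_k)$ by $\|x_k\|^2$ and letting $k\to+\infty$ gives $\langle Av,v\rangle=0$ (the linear and constant terms are killed by the $\|x_k\|^{-2}$ factor while $f(x_k)$ stays bounded), and dividing the identity $g(x_k)=0$ by $\|x_k\|^2$ gives $\langle Bv,v\rangle=0$. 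Hence $F_H(v)=0$, so \eqref{lscq:00} yields $v=0$, contradicting $\|v\|=1$. Thus minimizing sequences are bounded, a convergent subsequence produces a minimizer, and $\underset{g(x)=0}{\rm argmin}~f$ is nonempty; it is closed by continuity and bounded (any sequence in it is minimizing, hence bounded by the above), so it is compact.

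For $(b)$ the same computation applied to an unbounded minimizing sequence $x_k\in K_P$ with $x_k/\|x_k\|\to v$ gives $\langle Av,v\rangle=0$ and, from $g(x_k)\le 0$, $\langle Bv,v\rangle\le 0$; now ND upgrades this to $\langle Bv,v\rangle<0$, since $\langle Av,v\rangle=0=\langle Bv,v\rangle$ would force $v=0$. The decisive consequences are: along $v$ the constraint behaves like a downward parabola, so $g(x+tv)\to-\infty$ as $|t|\to+\infty$ for every $x$ (hence $x+tv\in K_P$ for all large $|t|$), whereas $f$ is affine along $v$ because $\langle Av,v\rangle=0$, namely $f(x+tv)=f(x)+t\langle\nabla f(x),v\rangle$ with $\nabla f(x)=Ax+a$.

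Next I would pin down the direction $v$. Fixing any feasible $x_0$, both $x_0+tv$ and $x_0-tv$ are feasible for large $t>0$, so if $\langle\nabla f(x_0),v\rangle\ne 0$ then $f\to-\infty$ along one of these feasible rays, contradicting $\mu\in\R$; thus $\langle Ax_0+a,v\rangle=0$ for every $x_0\in K_P$. Since $g(tv)\to-\infty$, there is a strictly feasible point and $K_P$ has nonempty interior, so the affine map $x_0\mapsto\langle Av,x_0\rangle+\langle a,v\rangle$ vanishes on an open set; therefore $Av=0$ and $\langle a,v\rangle=0$, which is the last assertion of the theorem. Consequently $\langle\nabla f(x),v\rangle=\langle x,Av\rangle+\langle a,v\rangle=0$ for all $x$, so $f$ is constant along $v$: $f(x+tv)=f(x)$ for all $x$ and $t$.

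Finally I would extract the minimizer. Writing $x=sv+w$ with $w\in v^{\perp}$, the constancy of $f$ along $v$ means $f(x)=\tilde f(w)$, where $\tilde f\doteq f|_{v^\perp}$. For each $w$ the downward parabola $s\mapsto g(sv+w)$ attains a feasible value, so $\tilde f(w)\ge\mu$; hence $\tilde f$ is a quadratic bounded below on $v^\perp$ and therefore attains its infimum at some $\bar x$, with $\inf_{\R^n}f=\inf_{v^\perp}\tilde f=\mu$ (the reverse inequality $\inf_{\R^n}f\le\mu$ being clear since $K_P\subseteq\R^n$). Thus $\bar x\in\underset{\R^n}{\rm argmin}~f$, and since $g(\bar x+tv)\to-\infty$ there is $t_0>0$ with $\bar x+tv\in K_P$ for $|t|>t_0$; as $f(\bar x+tv)=\mu$ equals the constrained optimum, $\bar x+tv\in\underset{g(x)\le 0}{\rm argmin}~f$ for all $|t|>t_0$, which in particular shows this argmin is nonempty. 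The main obstacle is precisely the upgrade in $(b)$ from ``$\langle\nabla f,v\rangle=0$ on $K_P$'' to the pointwise identities $Av=0$, $\langle a,v\rangle=0$: it hinges on producing a strictly feasible point so that $K_P$ has nonempty interior, and on the fact that an affine function vanishing on an open set vanishes identically. Once $f$ is known to be constant along $v$, the reduction to the quadratic $\tilde f$ on $v^\perp$ and the classical fact that a quadratic bounded below attains its minimum complete the argument routinely.
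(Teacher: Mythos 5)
Your proof is correct and follows essentially the same route as the paper: normalize the unbounded minimizing sequence to extract $v$ with $F_H(v)$ degenerate, use ND to force $v=0$ in the equality case and $\langle Av,v\rangle=0$, $\langle Bv,v\rangle<0$ in the inequality case, then exploit that $g\to-\infty$ and $f$ is affine (hence constant) along $v$ to conclude $Av=0$, $\langle a,v\rangle=0$, $\mu=\inf_{\R^n}f$, and the family of solutions $\bar x+tv$. The only cosmetic difference is that you first get $\langle\nabla f(x_0),v\rangle=0$ on the feasible set and extend by an affine-identity argument, whereas the paper observes directly that $x+tv$ is eventually feasible for \emph{every} $x\in\R^n$, so the slope vanishes everywhere at once.
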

\begin{proof} $(a)$: Case $P=\{0\}$: take any minimizing sequence $x_k\in K_P$.
Suppose that $\sup_k\|x_k\|=+\infty$. Up to a subsequence, we may assume that
$\|x_k\|\to +\infty$ and $\dfrac{x_k}{\|x_k\|}\to v$. From $g(x_k)=0$ and
$f(x_k)\to \mu$ it follows that
$\langle Bv,v\rangle=0$ and $\langle Av,v\rangle=0$. By assumption, $v=0$,
reaching a contradiction. Hence every minimizing sequence is bounded.\\
$(b)$: Case $P=\R_+$: take any minimizing sequence $x_k\in K_P$. If
$\sup_k\|x_k\|<+\infty$, we get that every limit point of $\{x_k\}$ yields a
solution to \eqref{probpq_min0}, as usual. \\
Take now any minimizing sequence $x_k$ such that $\|x_k\|\to +\infty$ and
$\dfrac{x_k}{\|x_k\|}\to v$. From $g(x_k)\leq 0$
and $f(x_k)\to \mu$ it follows that  $\langle Bv,v\rangle\leq 0$ and
$\langle Av,v\rangle=0$. By assumption,
$\langle Bv,v\rangle< 0$ and $\langle Av,v\rangle=0$. \\
Thus, by writting, for any $x\in\R^n$,
$g(x+tv)=g(x)+t\langle\nabla g(x),v\rangle+
\frac{1}{2}t^2\langle Bv,v\rangle $, we conclude
that $g(x+tv)<0$ for all $|t|>t_1$, for some $t_1$ depending of $x$, and
therefore $f(x+tv)\geq\mu$ for all $|t|\geq t_1$. Since
$\mu\leq f(x+tv)=f(x)+t\langle\nabla f(x),v\rangle$, we deduce that
$\langle\nabla f(x),v\rangle=0$, and so $\mu\leq f(x+tv)=f(x)$ for all $t\in\R$.
The former implies $Av=0$ and $\langle a,v\rangle=0$, and the latter gives that
$\displaystyle\mu=\inf_{x\in\R^n} f(x)$. Hence
$A\succcurlyeq 0$ and there exists $\bar x\in\underset{\R^n}{\rm argmin}~f$
such that $A\bar x+a=0$. Moreover, since $f(\bar x+tv)=f(\bar x)=\mu$ for all
$t\in\R$, we infer that $g(\bar x+tv)<0$ for all $|t|>t_0$, and so
\eqref{pseudo:q} is satisfied.
\end{proof}

\begin{rem} Part $(b)$ of the previous theorem provides explicit solutions to
\eqref{probpq_min0}. Indeed, it is well known that
$\bar x\in\underset{\R^n}{\rm argmin}~f$ if, and only if
$A\succcurlyeq 0$ and $A\bar x+a=0$. By using the pseudoinverse of More-Penrose
of any matrix, one obtains that $x_0 =-A^\dagger a$, with $A^\dagger$ being
such a pseudoinverse of $A$, is the unique solution with minimal norm.  Thus,
by taking $t$ sufficiently large, $x_0+tv$ is a solution for the
problem \eqref{probpq_min0}.
\end{rem}
The next instance shows that without assumption \eqref{lscq:00} the set of
minima may be empty.

\begin{ex} \label{ej:s:lema}
Let $P$ be either $\{0\}$ or $\R_+$ and take
$$A=\begin{pmatrix}
2 & 0\cr
0 & 0
\end{pmatrix},~~
B=\begin{pmatrix}
0 & 2\cr
2 & 0
\end{pmatrix},~~
a=\begin{pmatrix}
0\cr
0
\end{pmatrix},~~
b=\begin{pmatrix}
0\cr
0
\end{pmatrix},~~k_1=0,~~k_2=1.
$$
Then $F(\R^2)=(0,1)+F_H(\R^2),~~
F_H(\R^2)=\{(0,0)\}\cup (\R_{++}\times\R).$ In addition,
one can check that $0=\mu\doteq\min\{x_1^2:~2x_1x_2+1\in-P\}$,
\eqref{lscq:00} is not satisfied and $\underset{g(x)\in-P}{\rm argmin}~f=\emptyset$.
\end{ex}

\section{Some historical notes for a pair of quadratic forms}\label{secc6}


We will concern only with a pair of quadratic forms in $\R^n$, and use the
notation introduced in Section \ref{secc2}. It seems to be the
convexity Dines theorem was conceived once Dines awares the following result,
known as (strict) Finsler's theorem: if
\begin{equation}\label{ecua1:f}
[0\not=v,~\langle Bv,v\rangle=0]~\Longrightarrow \langle Av,v\rangle>0
\end{equation}
then
\begin{equation}\label{ecua:f111}
\exists~\lambda\in\R,~A+\lambda B\succ 0,
\end{equation}
and believed that convexity must be present. The previous result was proved
first, as far as  we know, by Finsler in \cite{fins1937}, and re-proved in
\cite{albert1938, reid1938, heste1968, ham1999} (a extension to more than two
matrices appears in \cite{heste-mc1940}). That result is a kind
of S-lemma which originally read as follows: assuming that
$\langle B\bar v,\bar v\rangle<0$ for some $\bar v$, then
\begin{equation}\label{ecua2:f}
\langle Bv,v\rangle\leq 0~\Longrightarrow \langle Av,v\rangle\geq 0
\end{equation}
is equivalent to
\begin{equation}\label{ecua1:f111}
\exists~\lambda\geq 0,~A+\lambda B\succcurlyeq 0.
\end{equation}
This lemma was proved by Yakuvobich \cite{yaku1971, yaku1977}. Since then,
several variants of it and possible connections with  well-known
properties of matrices have been appeared. A nice survey  about the S-lemma is
presented in \cite{polter2007}; whereas the mentioned properties treated in detail
may be found, for instance, in \cite{greub1967, hj1985}, see also
\cite{uhlig1979}.

In what follows we list some of the main properties useful in the study of
quadratic forms.

\begin{itemize}
\item[$(a)$] SD;
\item[$(b)$]  $\exists~t_1,~ t_2\in\R,~t_1A+t_2B\succ 0;$
\item[$(c)$]  $\exists~t\in\R,~A+t B\succ 0;$
\item[$(d)$] $[0\not=v,~\langle Bv,v\rangle= 0]~\Longrightarrow
\langle Av,v\rangle>0;$
\item[$(e)$] ND; 
\item[$(f)$] $ \langle Bv,v\rangle= 0~\Longrightarrow \langle Av,v\rangle
\geq 0;$
\item[$(g)$]  $\exists~t\in\R,~A+t B\succcurlyeq 0;$
\item[$(h)$]  $F_H(\R^n)=\R^2$.
\end{itemize}

The relationship between these properties are given below:
\begin{itemize}
\item[$\bullet$]  $(b)\Longrightarrow (a)$, see \cite[Theorem 7.6.4]{hj1985};
\item[$\bullet$] $(c)\Longleftrightarrow(d)$, see \cite{fins1937}, also
\cite{albert1938}, \cite[Corollary 2, page 498]{dines1941},
\cite{calabi1964, ham1999}; a different  proof may be found in
\cite[Theorem 2.2]{more};
\item[$\bullet$] ($n\geq 3$) $(e)\Longrightarrow (a)$ it is attributed to Milnor,
\cite[page 256]{greub1967}, see also \cite[Theorem 2.1]{uhlig1972};
\item[$\bullet$] $[F_H(\R^n)\not=\R^2$ and $(e)]\Longleftrightarrow (b)$,
see \cite[Corollary 1, page 498]{dines1941};
\item[$\bullet$] ($n\geq 3$) $(h)\Longrightarrow$ ND is not true, see
\cite[Corollary, page 401]{heste1968};
\item[$\bullet$] ($n\geq 3$) $(e)\Longleftrightarrow (b)$, see
\cite{fins1937}, also  \cite{calabi1964};
\item[$\bullet$] ($B$ indefinite) $(f)\Longleftrightarrow(g)$, see
\cite[Theorem 2.3]{more} and \cite{ham1999}.
\end{itemize}

Finally, in \cite{yan2010} some relationships between $(f)$, $(d)$  and $(e)$
and the Yakuvobich $S$-lemma (for a pair of quadratic forms), are estalished.
They are related to the non-strict Finsler's, strict Finsler's and
Finsler-Calabi's theorem, respectively.


\end{document}